\documentclass[12pt,pdftex]{amsart}
\usepackage[margin=1in]{geometry}	

\usepackage{amsmath,amsthm,amssymb,amsfonts,bm, multicol}
\usepackage[mathscr]{euscript}
\usepackage{mathtools}
\usepackage{url, hyperref}
\AtBeginDocument{}

\usepackage{tikz}
\usepackage{tkz-graph}
\usetikzlibrary{matrix,arrows,shapes}

\theoremstyle{plain}
\newtheorem{thm}{Theorem}
\newtheorem{lemma}[thm]{Lemma}
\newtheorem{prop}[thm]{Proposition}
\newtheorem{cor}[thm]{Corollary}

\newtheorem{case*}{Case}

\numberwithin{subcase}{case}

\theoremstyle{definition}
\newtheorem{defn}[thm]{Definition}
\newtheorem{ex}[thm]{Example}

\theoremstyle{remark}

\newcommand{\sT}{\mathscr{T}}
\newcommand{\sA}{\mathscr{A}}
\newcommand{\sB}{\mathscr{B}}

\newcommand{\zz}{\mathbb{Z}}

\newcommand{\xx}{\bm{x}}

\newcommand{\sg}{\mathscr{S}}

\newcommand{\rL}{\widetilde{L}}
\newcommand{\rV}{\widetilde{V}}



\newcommand{\smax}{\mathrm{max}}




\DeclareMathOperator{\weight}{wt}

\graphicspath{{./figures/}}

\begin{document}

\title{The Avalanche Polynomial of a Graph}

\author[D. Austin]{Demara Austin}
\address[D. Austin]{Department of Mathematics, Kansas State
  University, Manhattan, KS 66506}
\email{\href{mailto:draustin@math.ksu.edu}{draustin@math.ksu.edu}}
\author[M. Chambers]{Megan Chambers}
\address[M. Chambers]{Department of Mathematics, North Carolina State
  University, Raleigh, NC 27695-8205}
\email{\href{mailto:mjchambe@ncsu.edu}{mjchambe@ncsu.edu}}
\author[R. Funke]{Rebecca Funke}
\address[R. Funke]{Department of Math \& Computer Science,
University of Richmond, VA 23173}
\email{\href{mailto:becca.funke@gmail.com}{becca.funke@gmail.com}}
\author[L. D. Garc\'{\i}a Puente]{Luis David Garc\'{\i}a Puente}
\address[L. D. Garc\'{\i}a Puente]{Department of Mathematics and
  Statistics, Sam Houston State University, Huntsville, TX 77341-2206}
\email{\href{mailto:lgarcia@shsu.edu}{lgarcia@shsu.edu}}
\urladdr{\url{http://www.shsu.edu/ldg005}}
\author[L. Keough]{Lauren Keough}
\address[L. Keough]{Department of Mathematics,
  Grand Valley State University, Allendale, MI 49401}
\email[Corresponding author]{\href{mailto:keough.lauren@gmail.com}{keough.lauren@gmail.com}}

\begin{abstract}
The (univariate) avalanche polynomial of a graph, introduced by Cori,
Dartois and Rossin in 2006, captures the distribution of the length of
(principal) avalanches in the abelian sandpile model. This polynomial
has been used to show that the avalanche distribution in the sandpile
model on a multiple wheel graph does not follow the expected power law
function. In this article, we introduce the (multivariate) avalanche
polynomial that enumerates the toppling sequences of all principal
avalanches. This polynomial generalizes the univariate avalanche
polynomial and encodes more information. In particular, the avalanche
polynomial of a tree uniquely identifies the underlying tree. In this
paper, the avalanche polynomial is characterized for  trees, cycles,
wheels, and complete graphs.  
\end{abstract}

\keywords{avalanche polynomial, burst size, cycle graphs, complete graphs, power
  law, principal avalanches, sandpile model, 
  self-organized criticality, wheel graphs}

\subjclass{Primary 05C99; Secondary 05A15, 05C30, 05C31}

\thanks{This research was supported by NSF grants DMS-1045082, DMS-1045147 , and NSA grant
  H98230-14-1-0131. The fourth author was also partially supported by Simons
  Foundation Collaboration grant 282241.}
\thanks{The authors would like to thank the 2014 PURE Math program and
the University of Hawai'i at Hilo for making this collaboration
possible.}

\maketitle

\section{Introduction}

The area of \emph{sandpile groups} is a flourishing area that started
in mathematical physics in 1987 with the seminal work of Bak, Tang and
Wiesenfeld \cite{BTW88}. Since then it has found many, often unexpected,
applications in diverse areas of mathematics, physics, computer
science and even some applications in the biological sciences and
economics.  In thermodynamics, a \emph{critical point} is the end point of a
phase equilibrium curve. The most prominent example is the
liquid-vapor critical point, the end point of the pressure-temperature
curve at which the distinction between liquid and gas can no longer be
made. In order to drive this system to its critical point it is
necessary to tune certain parameters, namely pressure and temperature.  

In nature, one can also observe different types of dynamical systems
that have a critical point as an attractor. The macroscopic behavior
of these systems displays the spatial and/or temporal scale-invariance
characteristic of the critical point of a phase transition, but
without the need to tune control parameters to precise values. Such a
system is said to display \emph{self-organized criticality}, a concept first
introduced in 1987 by  Bak, Tang and 
Wiesenfeld in their groundbreaking paper. This concept
is thought to be present in a large variety of physical systems like
earthquakes  \cite{SS89, CBO91}, forest fires and in stock
market fluctuations \cite{B96}. Self-organized criticality is
considered to be one of the mechanisms by which \emph{complexity}
arises in nature \cite{BP95} and has been extensively studied 
in the statistical physics literature during the last three decades
\cite{J98, P12, MG14}.   

In \cite{BTW88}, Bak, Tang and Wiesenfeld conceived a \emph{cellular
  automaton} model as a paradigm of self-organized criticality.  This
model is defined on a rectangular grid of cells as shown in 
Figure \ref{grid}. The system evolves in discrete time such that at
each time step a sand grain is 
dropped onto a random grid cell.  When a  cell
amasses four grains of sand, it becomes \emph{unstable}.  It relaxes by
\emph{toppling}  whereby four sand grains leave the site, and each of the
four neighboring sites gets one grain. If the unstable cell is on the
boundary of the grid  then, depending on whether the cell is a corner
or not, either
one or two sand grains fall off the edge and disappear. As the sand
percolates over the grid in this fashion, adjacent cells may
accumulate four grains of sand and become unstable causing an
\emph{avalanche}. This settling process continues until all cells are
\emph{stable}. Then another cell is picked randomly, the height of  the sand
on that grid cell is increased by one, and the process is repeated. 

\begin{figure}[!htb]
\begin{center}
\includegraphics[scale=.8]{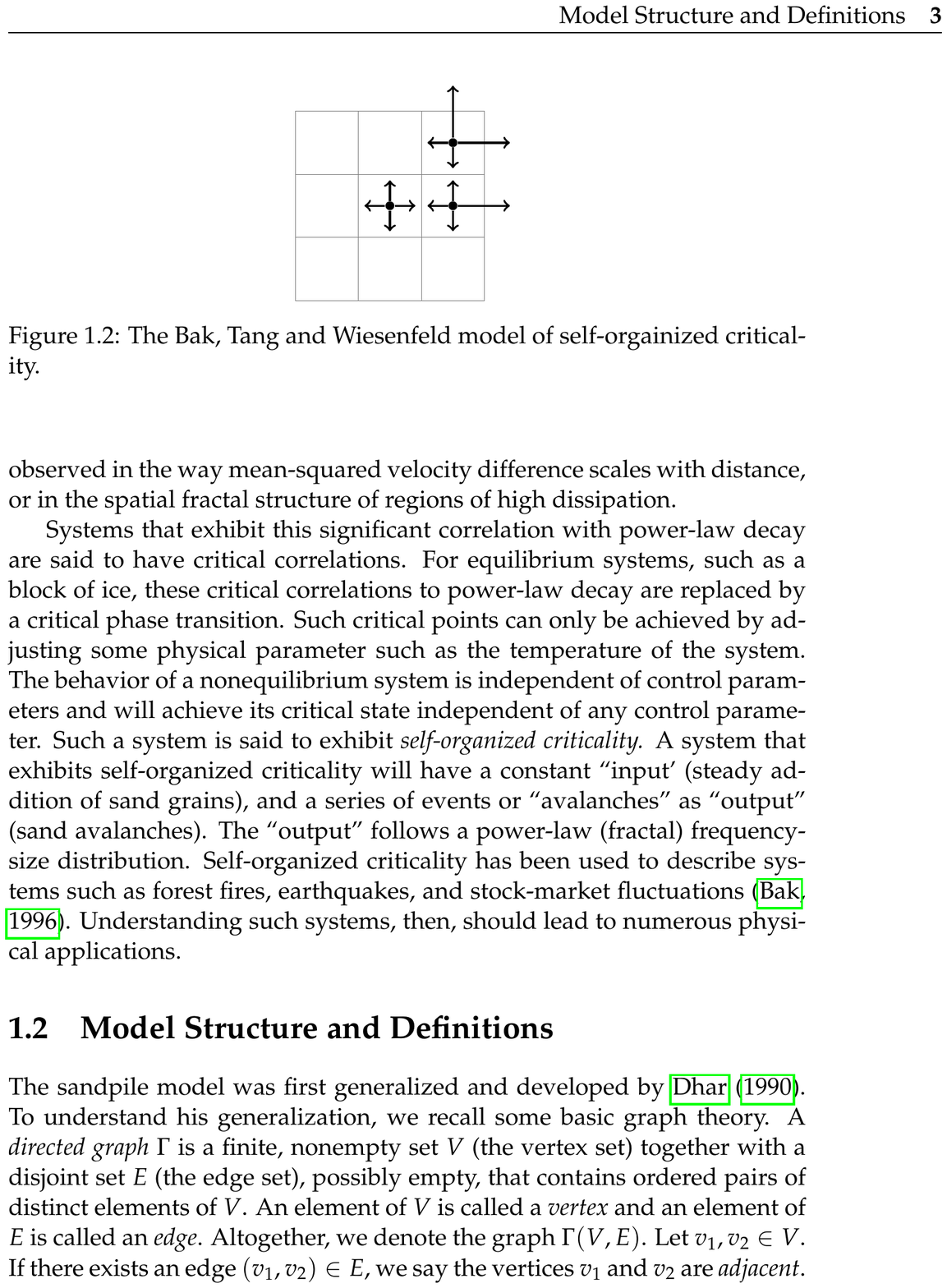}
\end{center}
\caption{The Bak, Tang and Wiesenfeld model of self-organized criticality.}
\label{grid}
\end{figure}

Imagine starting this process on an empty grid. 
At first there is little
activity, but as time goes on, the size (the total number of topplings
performed) and extent of the avalanche
caused by a single grain of sand becomes hard to predict. Figure
\ref{soc} shows the distribution of avalanches in a computational
experiment performed on a $20\times 20$ grid. Starting with the 
\emph{maximal stable sandpile}, i.e., the sandpile with three grains
of sand at each site, a total of $100,000$ sand grains were added at random,
allowing the sandpile to stabilize in between. 
Many authors have studied the distribution of the sizes of the
avalanches for this model showing that it obeys a power law with
exponent $-1$ \cite{C91, DRSV95, KLGP00}.

\begin{figure}[!htb]
\begin{center}
\includegraphics[scale=.4]{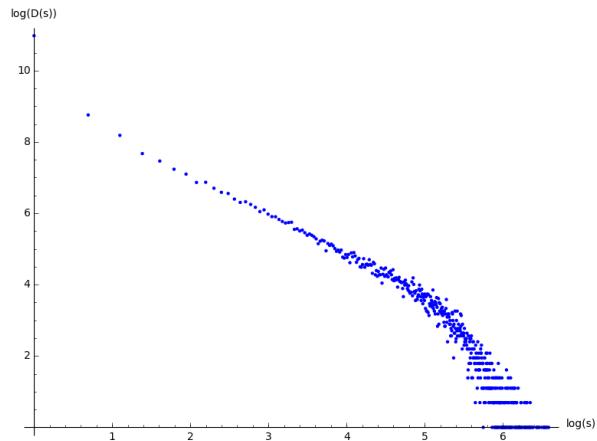}
\end{center}
\caption{Distribution of avalanches $D(s)$ as
  a function of the size $s$.}
\label{soc}
\end{figure}

In 1990, Dhar generalized the Bak, Tang and Wiesenfeld  model
replacing the rectangular grid with an arbitrary combinatorial graph
\cite{D90}. In this model,  known as the \emph{(abelian) sandpile model},
the sand grains are placed  
at the vertices of the graph and the toppling threshold depends on the
degree (outdegree) of each vertex. Certain conditions on the graph
(the existence of a \emph{global sink vertex}) ensure that any
avalanche terminates after a finite number of topplings.  
The sandpile model was also considered by combinatorialists as a game on a
graph called the \emph{chip firing game} \cite{BLS91,
  BL92, B99}. 

The long-term behavior
of the abelian sandpile model on a graph is encoded by the \emph{critical
configurations}, also known as \emph{recurrent sandpiles}. These
critical configurations have
connections to  \emph{parking functions} \cite{BCT10}, to the
\emph{Tutte polynomial} \cite{Merino05}, and to the lattices of \emph{flows}
and \emph{cuts} of a graph \cite{BDN97}.  Among other properties, the
critical configurations have the structure of a
\emph{finite abelian group}. This group has been discovered in several
different contexts and received many 
names: the {\it sandpile group} \cite{D90, HLMPPW08}, the {\it
  critical group}~\cite{B99}, the {\it group of 
bicycles} \cite{B86}, the {\it group of components}~\cite{L89}, and the
{\it jacobian} of a graph~\cite{BN07}.

The fact that the distribution of avalanches on the grid follows a
power law has been a focal point from the statistical physics
perspective. A natural question is what type of distributions do we
get in the more general setting introduced by Dhar. In this paper, we
focus on this question. In fact, we go one step beyond just finding such
distributions. We actually describe the combinatorial structure of each
avalanche for certain families of graphs.

Experiments on the distribution of sizes of the avalanches have been
mostly restricted to the cases of rectangular grids and some classes
of regular graphs. However, very
little is known for arbitrary graphs \cite{DaRoFPSAC03}. 
In 2004, Cori, Dartois, and Rossin 
introduced the (univariate)
\emph{avalanche polynomial} that encodes the sizes of  \emph{principal
avalanches}, that is, avalanches resulting from adding a single grain
of sand to a recurrent sandpile \cite{CDR04}. These
authors completely describe the avalanche polynomials for 
trees, cycles, complete and lollypop graphs. Moreover, using these
polynomials, they show that the resulting sandpile models on
these graphs no longer obey the power law observed in the rectangular
grid. 
In 2003, Dartois and Rossin obtained exact results for the avalanche
distribution on wheel graphs
\cite{DaRoFPSAC03}. 
In 2009, Cori, Micheli and Rossin studied further properties of
the avalanche polynomial on plane trees \cite{CMR09}. In particular,
they show that the avalanche polynomial of a tree does not uniquely
characterize the tree. They also give closed formulas for the average
and variance of the avalanche distribution on trees. 
 
In this paper we introduce the \emph{(multivariate) avalanche
  polynomial}, i.e., a  multivariate polynomial encoding the toppling
sequences of all principal avalanches. This polynomial generalizes the
univariate avalanche polynomial and encodes more information. 
In  Section \ref{background}, we
describe the sandpile model on an undirected graph and
introduce both the univariate and multivariate avalanche
polynomials. We also present some particular evaluations of the latter
polynomial. In particular, one such evaluation gives rise to the
unnormalized distribution of burst sizes, that is, the number of
grains of sand that fall into the sink in a principal avalanche. In
Section \ref{trees}, we characterize the avalanche 
polynomial of a tree. We also prove in Theorem \ref{uniqueness} that
this polynomial uniquely characterizes its underlying tree. In
Section \ref{cycles}, we compute the multivariate avalanche 
polynomial for cycle graphs and in Section \ref{complete}, we compute
this polynomial for complete graphs using the bijection among
recurrent sandpiles and parking functions. Our
arguments fix some details in the proof of Proposition 5 in
\cite{CDR04} that enumerates the number of principal avalanches of
positive sizes in the complete graph. In Section \ref{wheel}, we
compute the avalanche polynomial for wheel graphs. Our methods 
simplify the arguments in \cite{DaRoFPSAC03} where the authors use
techniques from \emph{regular languages}, \emph{automatas} and \emph{transducers} to characterize the recurrent
sandpiles and determine the exact distribution of avalanche lengths in the wheel graph.  


\section{The Abelian Sandpile Model}\label{background}
The \emph{abelian sandpile model} is defined both on directed and
undirected graphs, but here we focus on families of undirected
multigraphs without loops.

\begin{defn} An \emph{(undirected) graph} $G$ is an ordered pair
$(V,E)$, where $V$ is a finite set and $E$ is a finite multiset of the
set of 2-element subsets of $V$.  The elements of $V$ are called
\emph{vertices} and the elements of $E$ are called \emph{edges}.
Given an undirected graph $G = (V,E)$, the \emph{degree} $d_v$ of
a vertex $v \in V$ is the number of edges $e\in E$ with $v \in e$.
For a pair of vertices $u,v\in V$, the weight $\weight(u,v)$ is the
number of edges between $u$ and $v$. We say $u$ and $v$ are adjacent
if $\weight(u,v)>0$.
\end{defn}

Here, we will always assume that our graphs are
\emph{connected}. Moreover, given a graph $G = (V,E)$ we will
distinguish a vertex $s\in V$ and call it a \emph{sink}. The resulting
graph will be denoted $G = (V,E,s)$. 
We will also denote the set of all non-sink vertices by 
$\rV = V\setminus \{s\}$.

\begin{defn} A \emph{sandpile} $c$ on $G = (V,E,s)$ is a
function $c: \rV \to \zz_{\geq 0}$ from the non-sink vertices of
$G$ to the set of non-negative integers, where 
$c(v)$ represents the number of grains of sand at vertex $v$.
We call $v$ \emph{unstable} if $c(v)
\geq d_{v}$. An unstable vertex $v$ can \emph{topple}, resulting in a
new sandpile $c'$ obtained by moving one grain of sand along each of
the $d_{v}$ edges emanating from $v$; that is, $c'(w) = c(w) +
\weight(v,w)$ for all $w\neq v$ and $c'(v) = c(v) - d_{v}$. A sandpile
is \emph{stable} if $c(v) < d_{v}$ for every non-sink vertex $v$ and
\emph{unstable} otherwise.
\end{defn}

The following proposition justifies the name ``abelian sandpile
model'' and it was first proved by Dhar in \cite{D90}.

\begin{prop}\label{prop:THEstabilization} 
Given an unstable sandpile
$c$ on a graph $G = (V, E, s)$, any sequence of topplings of
unstable vertices will lead to the same stable sandpile.
\end{prop}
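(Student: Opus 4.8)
The plan is to prove this by the standard \emph{diamond lemma} (local confluence) argument, which is how Dhar's abelian property is typically established. Two preliminary observations drive the proof. First, toppling can only terminate: because the sink absorbs sand and the graph is connected, there is a uniform bound on how many times any vertex can topple before the sandpile becomes stable, so every sequence of legal topplings is finite. (If one wants to avoid invoking termination a priori, one can instead track the \emph{toppling vector}, the function recording how many times each vertex has toppled, and argue via a minimal counterexample.) Second, and this is the heart of the matter, one shows \textbf{local confluence}: if $c$ is unstable and both $u$ and $v$ are unstable vertices of $c$ with $u \neq v$, then toppling $u$ first and toppling $v$ first lead to sandpiles that can be brought to a common sandpile by one further toppling each. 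This is a direct computation: since the toppling operation at $u$ only changes $c$ at $u$ and its neighbors by fixed amounts, and likewise for $v$, the two operations commute as affine maps on $\zz^{\rV}$; moreover $v$ remains unstable after toppling $u$ (its count only went up, or stayed the same) and vice versa, so both ``one further toppling'' steps are legal. Hence toppling $u$ then $v$ gives the same sandpile as toppling $v$ then $u$.

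From local confluence and termination, the global statement follows by Newman's lemma (a terminating, locally confluent rewriting system is confluent), but since the audience may not want to cite that, I would give the short self-contained induction. The cleanest formulation: I claim that if $c \to^{*} c_1$ by some toppling sequence and $c \to^{*} c_2$ by another, and both $c_1, c_2$ are stable, then $c_1 = c_2$. Proceed by induction on the length of the first toppling sequence. If that length is $0$ then $c = c_1$ is already stable, so the second sequence must also have length $0$ and $c_2 = c = c_1$. Otherwise the first sequence starts by toppling some vertex $u$, giving $c \to c'$, and continues to $c_1$. The second sequence starts by toppling some vertex $v$, giving $c \to c''$, and continues to $c_2$. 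If $u = v$ then $c' = c''$ and we apply the inductive hypothesis to $c'$. If $u \neq v$, local confluence gives a sandpile $d$ with $c' \to d$ and $c'' \to d$; let $d \to^{*} \hat d$ be any completion of $d$ to a stable sandpile. Applying the inductive hypothesis to $c'$ (comparing its given sequence to $c_1$ with the sequence $c' \to d \to^{*} \hat d$) yields $c_1 = \hat d$, and applying it again — here one needs that $c''$ is reached by a shorter sequence than the original first sequence, which is true since we peeled off one toppling, so the inductive hypothesis applies with the roles arranged appropriately — yields $c_2 = \hat d$. Hence $c_1 = c_2$.

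The main obstacle is bookkeeping in the induction: one has to set up the induction hypothesis so that it can be applied to $c''$ as well as $c'$, and the asymmetry (we only shortened one of the two sequences) needs a careful statement. A clean way around this is to induct instead on the \emph{sum of the two toppling vectors} or on the minimum of the two sequence lengths, or simply to first prove the purely combinatorial statement that any two \emph{maximal} toppling sequences from $c$ have the same toppling vector, by a symmetric diamond-closure argument, and then observe that equal toppling vectors force equal final sandpiles since the final sandpile is $c$ plus a fixed linear function of the toppling vector. I would present the proof in the latter style: establish that the toppling vector of a stabilizing sequence is independent of the sequence (via local confluence plus induction on toppling-vector size), and then conclude that the stable sandpile itself is determined. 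The termination observation and the commutation computation are both routine; the only place demanding care is phrasing the induction so the diamond closes.
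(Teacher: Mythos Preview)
The paper does not actually prove this proposition; it merely states it and attributes it to Dhar \cite{D90}. So there is no ``paper's own proof'' to compare against.

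Your diamond-lemma argument is the standard one and is correct in outline. One comment: in your inductive step when $u\neq v$, the parenthetical justification for applying the inductive hypothesis to $c''$ is not right as written. You say ``$c''$ is reached by a shorter sequence than the original first sequence,'' but $c''$ was obtained by peeling off the first step of the \emph{second} sequence, so you have not shortened the quantity you are inducting on. You already diagnose this in the next paragraph and give the right repairs (induct on the sum of the two lengths, or prove directly that any two maximal toppling sequences have the same toppling vector); either of those works and is what you should actually present. With that fix, the proof is complete.
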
 


Given a sandpile $c$, if $c'$ is obtained from $c$ after a sequence of
sand additions and topplings, we say that $c'$ is \emph{accessible}
form $c$ and we call $c'$ a \emph{successor} of $c$. We denote this by
$c \leadsto c'$. Moreover, if $c'$ is obtained from $c$ by a sequence
of topplings, then the \emph{toppling vector} $f$ associated to the
stabilization $c \leadsto c'$ is the integer vector indexed by the
non-sink vertices of $G$ with $f(v)$ equal to the number of times
vertex $v$ appears in the vertex toppling sequence that sends $c$ to
$c'$. Finally, given a sandpile $c$, the \emph{unique stable} sandpile
obtained after a sequence of topplings is denoted by $c^{\circ}$ and
is called the \emph{stabilization} of $c$. 

Let $G = (V, E, s)$ be a graph and $a, b$ be
two sandpiles on $G$. Then $a + b$ denotes the sandpile obtained
by adding the grains of sand vertex-wise, that is, $(a + b)(v) = a(v)
+ b(v)$ for each $v \in \rV$. Note that even if $a$ and $b$
are stable, $a+b$ may not be. We denote the stabilization
of $a + b$ by $a \oplus b$, that is, $a \oplus b = (a +
b)^{\circ}$. The binary operator $\oplus$ is called \emph{stable
addition}.

\begin{defn} A sandpile $c$ is \emph{recurrent} if it is stable
and given any sandpile $a$, there exists a sandpile $b$ such that
$$a\oplus b = c.$$ 
\end{defn}

As an example, given a graph $G = (V, E, s)$, the sandpile $\smax$
defined by $\smax(v) = d_v-1$ for each $v$ in $\rV$ is
recurrent. We call $\smax$ the maximal stable sandpile on $G$.
The following well-known proposition gives a simpler way to compute
the recurrent 
sandpiles. 
 
\begin{prop}\label{prop:RecurrentCondition} A stable sandpile $c$
is recurrent if and only if there exists a sandpile $b$ with 
$$\smax \oplus b = c.$$
\end{prop}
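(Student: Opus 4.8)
The plan is to prove the two implications separately, with essentially all of the work concentrated in one auxiliary observation about stable addition. For the forward direction ($\Rightarrow$) there is nothing to do: if $c$ is recurrent then, applying the definition of recurrence to the particular sandpile $a=\smax$, we obtain a sandpile $b$ with $\smax\oplus b=c$.

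For the converse ($\Leftarrow$), fix a sandpile $b$ with $\smax\oplus b=c$; we must produce, for an arbitrary sandpile $a$, some sandpile $b'$ with $a\oplus b'=c$. The key point is that $\oplus$ is associative: for any sandpiles $x,y,z$ on $G$ one has $(x\oplus y)\oplus z=(x+y+z)^{\circ}=x\oplus(y\oplus z)$. This follows from Proposition \ref{prop:THEstabilization}: starting from the (possibly unstable) pile $x+y+z$, one may first perform exactly the toppling sequence that stabilizes $x+y$ — these topplings remain legal and have the same effect in the presence of the extra grains contributed by $z$, since a toppling only requires the threshold to be met and always moves a fixed number of grains — reaching $(x\oplus y)+z$, and then stabilize the result to reach $((x\oplus y)+z)^{\circ}$; this configuration is obtained from $x+y+z$ by a sequence of topplings, hence equals $(x+y+z)^{\circ}$, and the other grouping is handled symmetrically. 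Granting this, recall from the text that $\smax$ is itself recurrent; applied to our chosen $a$, there is a sandpile $b''$ with $a\oplus b''=\smax$. Setting $b':=b''\oplus b$, which is a sandpile since it is a stabilization, we compute $a\oplus b'=a\oplus(b''\oplus b)=(a\oplus b'')\oplus b=\smax\oplus b=c$, as desired.

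The only step that is more than bookkeeping is the associativity of $\oplus$; the mild subtlety there is to phrase the ``stabilize in any order'' argument so that Proposition \ref{prop:THEstabilization} applies cleanly to the single combined pile $x+y+z$ — in particular, observing that interspersed additions of already-stable piles may always be deferred without affecting which topplings are available or what they do. Once that is in hand, both implications are short, and there is no further obstacle.
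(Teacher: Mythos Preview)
The paper does not actually supply a proof of this proposition; it is recorded as a ``well-known'' fact immediately after the definition of recurrence and the observation that $\smax$ is recurrent. Your argument is correct and is the standard one: the forward direction is immediate from the definition applied to $a=\smax$, and for the converse you combine the recurrence of $\smax$ (stated in the paper just before the proposition) with associativity of stable addition, which you derive cleanly from Proposition~\ref{prop:THEstabilization} by noting that any legal toppling sequence for $x+y$ remains legal in the presence of the extra nonnegative pile $z$. There is nothing to compare against, and no gap in what you wrote.
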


As mentioned before, the set of recurrent sandpiles on a graph
$G$ under stable addition forms a finite abelian group denoted
$\sg(G)$, see \cite{DRSV95}. Explicitly, the sandpile group of
$G$ is isomorphic to the cokernel of the  \emph{reduced Laplacian}
matrix of $G$. 
Moreover, this matrix
can be used to compute the stabilization of a sandpile algebraically. 
 
\subsection{Graph Laplacians}

\begin{defn} Let $G$ be a  graph with $n$ vertices
  $v_{1},v_{2},\dots, v_{n}$. The \emph{Laplacian}
of $G$, denoted $L = L(G)$, is the $n\times n$ matrix defined by
\[L_{ij} =
\begin{cases} -\weight(v_{i},v_{j}) & \text{for } i\neq j,\\ d_{v_{i}} &
\text{for } i=j.
\end{cases}
\]
\end{defn}

The \emph{reduced Laplacian} of a graph $G = (V, E, s)$, denoted, 
$\rL = \rL(G)$, is the matrix obtained by deleting the row and column
corresponding to the sink $s$ from the matrix $L$. 
Kirchhoff's  Matrix-Tree Theorem implies that the number of
recurrent sandpiles in $G$, that is, the determinant of reduced
Laplacian $\rL$ equals the number of spanning trees of $G$.
From our definition
we also have that if $c
\leadsto c'$ by toppling vertex $v$, then $c' = c - \rL 1_{v}$, where
$1_{v}$ denotes the (column) vector with
$1_{v}(v) = 1$ and $1_{v}(w) = 0$, for all $w\neq v$ in $\rV$. The
previous observation leads to the following result.  

\begin{prop}\label{prop:laplacian} 
Given a graph $G = (V, E, s)$, and a
sandpile $c$.  If $c \leadsto c'$ by a sequence of
topplings, then $c' = c - \rL f$, where $f$ is the (column)
toppling vector associated to  $c \leadsto c'$. 
\end{prop}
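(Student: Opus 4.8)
The plan is to argue by induction on the length $k$ of the toppling sequence that realizes $c \leadsto c'$. The engine of the whole argument is the single-toppling identity recorded just above the statement: if $c \leadsto c''$ by toppling a single unstable vertex $v$, then $c'' = c - \rL 1_{v}$. First I would make sure this base identity is justified carefully, checking that deleting the sink row and column from $L$ does exactly the right bookkeeping: vertex $v$ loses $d_{v} = L_{vv}$ grains, each non-sink neighbor $w$ gains $\weight(v,w) = -L_{vw}$ grains, and any grains sent along edges from $v$ to the sink $s$ simply vanish. This is precisely why $\rL$, and not $L$, is the correct operator: the coordinate that would record the flow into $s$ has been discarded. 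Restricted to $\rV$, the change in the configuration is therefore exactly $-\rL 1_{v}$.

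For the inductive step, write the toppling sequence as $v_{1}, v_{2}, \dots, v_{k}$, producing intermediate sandpiles $c = c_{0} \leadsto c_{1} \leadsto \cdots \leadsto c_{k} = c'$, where $c_{i}$ is obtained from $c_{i-1}$ by toppling $v_{i}$ (necessarily unstable in $c_{i-1}$). Let $f_{i}$ be the toppling vector of $c_{0} \leadsto c_{i}$, so by definition $f_{i}(v)$ counts the occurrences of $v$ among $v_{1},\dots,v_{i}$; in particular $f_{0} = 0$, $f_{k} = f$, and $f_{i} = f_{i-1} + 1_{v_{i}}$. Assuming $c_{i-1} = c - \rL f_{i-1}$, the single-toppling identity gives
\[
c_{i} = c_{i-1} - \rL 1_{v_{i}} = c - \rL f_{i-1} - \rL 1_{v_{i}} = c - \rL\bigl(f_{i-1} + 1_{v_{i}}\bigr) = c - \rL f_{i},
\]
using linearity of multiplication by $\rL$. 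Taking $i = k$ yields $c' = c - \rL f$, as claimed.

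I do not expect any genuine obstacle here; the argument is pure bookkeeping. The only step that deserves care — and the only place where the standing hypotheses (connectedness, a distinguished sink, $\rL$ formed by discarding the sink coordinate) really enter — is the base identity for a single toppling. Once that is in hand, the result is a one-line induction, and one could equally phrase it as: under the identification of sandpiles with vectors indexed by $\rV$, the operation ``topple $v$'' is subtraction of the $v$-th column of $\rL$, and composing such operations adds the corresponding columns.
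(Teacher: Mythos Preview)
Your proposal is correct and is exactly the argument the paper has in mind: the paper does not give an explicit proof but simply says ``the previous observation leads to the following result,'' that observation being the single-toppling identity $c' = c - \rL 1_{v}$. Your induction on the length of the toppling sequence is the natural (and only reasonable) way to make that remark precise.
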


The following result, known as Dhar's Burning Criterion, gives an
alternative and useful way to 
characterize recurrent sandpiles in an undirected graph.
Given $G = (V,E,s)$, let $u$ denote the sandpile given by $u_j =
\weight(v_j,s)$ for each $v_j\in \rV$. We will refer to the sandpile
$u$ as the sandpile obtained by `firing the sink'.

 \begin{prop}[{\cite[Corollary 2.6]{CR00}}] \label{prop:Burn} 
The sandpile $c$ is recurrent if and only if $u \oplus c =
c$. Moveover, the firing vector in the stabilization of $u \oplus c$
is $(1,\dots,1)$. 
\end{prop}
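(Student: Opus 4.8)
The plan is to prove the two implications separately, both driven by the identity $\rL\mathbf{1}=u$, where $\mathbf{1}=(1,\dots,1)^{T}$: summing the entries of the $i$th row of $\rL$ gives $d_{v_i}-\sum_{j\neq i}\weight(v_i,v_j)=\weight(v_i,s)$, so \emph{toppling every non-sink vertex once} and \emph{adding $u$} have the same net effect on any sandpile. Consequently, if one can legally topple each non-sink vertex exactly once starting from $c+u$, the result is $c+u-\rL\mathbf{1}=c$, and Proposition~\ref{prop:THEstabilization} (together with the injectivity of $\rL$, which holds since $\det\rL$ counts the spanning trees of the connected graph $G$) forces the associated toppling vector to be $\mathbf{1}$.

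First I would treat the case $c=\smax$, proving $(\smax+u)^{\circ}=\smax$. Grow a set $S\subseteq\rV$ of already-toppled vertices from $S=\emptyset$: while $S\neq\rV$, connectedness yields a vertex $w\notin S$ adjacent to $S\cup\{s\}$, and after toppling the vertices of $S$ once each the vertex $w$ holds $d_{w}-1+\weight(w,s)+\#\{\text{edges from }w\text{ to }S\}\geq d_{w}$ grains, hence may be toppled; add it to $S$. When $S=\rV$ the configuration is $\smax+u-\rL\mathbf{1}=\smax$, which is stable, so $(\smax+u)^{\circ}=\smax$ with toppling vector $\mathbf{1}$ by the previous paragraph. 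For the forward direction, if $c$ is recurrent then Proposition~\ref{prop:RecurrentCondition} gives a sandpile $b$ with $(\smax+b)^{\circ}=c$, and repeatedly using the identity $(x+y^{\circ})^{\circ}=(x+y)^{\circ}$ (a standard consequence of Proposition~\ref{prop:THEstabilization}) gives
\[
(u+c)^{\circ}=(u+\smax+b)^{\circ}=\bigl((u+\smax)^{\circ}+b\bigr)^{\circ}=(\smax+b)^{\circ}=c,
\]
so $u\oplus c=c$; the ``moreover'' then follows by applying the first paragraph to this $c$, using Proposition~\ref{prop:laplacian}.

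The converse --- that $u\oplus c=c$ forces $c$ recurrent --- is the substantive part. Let $R=\{c:\ c\text{ stable and }u\oplus c=c\}$. The forward direction shows every recurrent sandpile lies in $R$, and there are exactly $\det\rL$ recurrent sandpiles by Kirchhoff's theorem, so it suffices to prove $\lvert R\rvert\leq\det\rL$, i.e.\ that distinct elements of $R$ represent distinct classes of $\zz^{\rV}/\rL\zz^{\rV}$. Suppose $c,c'\in R$ and $c-c'=\rL w$ with $w\in\zz^{\rV}$. Iterating $u\oplus c=c$ and using Proposition~\ref{prop:laplacian}, stabilizing $c+Nu$ returns $c$ after toppling every vertex exactly $N$ times, for all $N\geq0$; choose $N$ so large that $v:=w+N\mathbf{1}\geq0$. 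Since $(c+Nu)-\rL v=c'$ is stable and nonnegative, the least action principle --- the standard consequence of Proposition~\ref{prop:THEstabilization} that, when stabilizing any $a\geq0$, no vertex topples more than $g_{x}$ times whenever $g\geq0$ and $a-\rL g$ is stable and nonnegative --- yields $N\mathbf{1}\leq v$, that is $w\geq0$. Running the same argument with $c$ and $c'$ interchanged gives $w\leq0$, hence $w=0$ and $c=c'$. Thus $R$ is exactly the set of recurrent sandpiles, and in particular $u\oplus c=c$ implies $c$ is recurrent.

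The step I expect to be the real obstacle is this last injectivity claim: the forward direction and the $\smax$ case are bookkeeping with the abelian property and the identity $\rL\mathbf{1}=u$, whereas comparing two distinct fixed points of ``fire the sink and stabilize'' lying in the same Laplacian class genuinely needs the least action principle --- whose short proof (inspect the first toppling that would violate the bound $g$) I would include, since it is not among the propositions recorded above.
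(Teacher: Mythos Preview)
The paper does not prove this proposition; it is simply quoted from \cite[Corollary~2.6]{CR00}. Your argument is correct. The identity $\rL\mathbf{1}=u$, the burning argument for $\smax$, and the deduction of the forward direction via $(x+y^{\circ})^{\circ}=(x+y)^{\circ}$ are all sound, and the ``moreover'' clause follows cleanly from the injectivity of $\rL$.

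For the converse you choose a counting route: rather than directly exhibiting a sandpile $b$ with $\smax\oplus b=c$, you show that the fixed points of ``fire the sink and stabilize'' inject into $\zz^{\rV}/\rL\zz^{\rV}$ and then invoke $\lvert\sg(G)\rvert=\det\rL$. The least action argument you sketch is correct, and your proposed one-line proof of least action (look at the first toppling that exceeds the bound $g$) is the standard one. The only caution is logical ordering: you are using the fact that the number of recurrents equals $\det\rL$, which in many treatments is itself derived \emph{from} the burning criterion. The present paper asserts this count prior to Proposition~\ref{prop:Burn}, so within the paper's framework your proof is non-circular; in a fully self-contained write-up you would want to verify that the count can be established independently (it can, via the monoid/group construction of $\sg(G)$ as the cokernel of $\rL$).
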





\begin{cor}\label{lem:ToppleOnce} Let $G = (V,E,s)$ be a graph
and $c$ a recurrent sandpile on $G$. 
When one grain of sand is added to a
vertex adjacent to the sink then every vertex can topple at most once.
\end{cor}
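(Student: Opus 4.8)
The plan is to compare the stabilization of $c + 1_{v_0}$ (where $v_0$ denotes the chosen vertex adjacent to the sink) with the stabilization of $c + u$, where $u$ is the ``fire-the-sink'' sandpile of Proposition~\ref{prop:Burn}, exploiting the fact that the latter stabilization topples every non-sink vertex exactly once.

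First I would record two elementary facts. Since $v_0$ is adjacent to $s$, we have $u(v_0) = \weight(v_0,s) \ge 1$, while $u(v) = \weight(v,s) \ge 0$ for every non-sink vertex $v$; hence $1_{v_0} \le u$ componentwise, and therefore $c + 1_{v_0} \le c + u$ componentwise. Second, because $c$ is recurrent, Proposition~\ref{prop:Burn} tells us that the toppling vector of the stabilization $c + u \leadsto (c+u)^{\circ}$ is $(1,\dots,1)$.

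The core step is a monotonicity comparison. Fix any legal sequence of topplings $\sigma = (w_1,\dots,w_k)$ that stabilizes $c + 1_{v_0}$, and let $f$ be its toppling vector; by Proposition~\ref{prop:THEstabilization} (together with the invertibility of $\rL$) the vector $f$ is independent of the chosen sequence. I would then argue that $\sigma$ is also a legal toppling sequence starting from $c + u$: running $\sigma$ in parallel on $c+1_{v_0}$ and on $c+u$, after any initial segment the two running configurations differ by the fixed nonnegative vector $u - 1_{v_0}$, so whenever $w_i$ is unstable in the $(c+1_{v_0})$-run it is also unstable in the $(c+u)$-run. After performing all of $\sigma$ from $c+u$ we reach the (possibly unstable) configuration $(c+1_{v_0})^{\circ} + (u - 1_{v_0})$; completing this to a full stabilization of $c+u$ adds a further nonnegative toppling vector, and by the abelian property the resulting total toppling vector must equal $(1,\dots,1)$. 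Hence $f \le (1,\dots,1)$ componentwise, which is precisely the claim that every vertex topples at most once.

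The only point requiring care is this comparison step --- verifying that ``the same legal sequence works from the larger configuration'' and that the abelian property legitimately identifies the concatenated toppling count with the genuine toppling vector of $c+u \leadsto (c+u)^{\circ}$. Everything else is bookkeeping with the inequality $1_{v_0} \le u$ and Dhar's criterion.
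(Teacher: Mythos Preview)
Your argument is correct and is exactly the monotonicity comparison one uses to derive this statement from Dhar's Burning Criterion (Proposition~\ref{prop:Burn}); the paper states the result as a corollary with no proof, so you are simply spelling out what is left implicit. The one subtlety you flag---that the concatenated toppling count agrees with the genuine toppling vector of the stabilization of $c+u$---is handled by the well-known strengthening of Proposition~\ref{prop:THEstabilization} that not only the stable configuration but also the toppling vector is independent of the order of legal topplings (equivalently, by invertibility of $\rL$ as you note).
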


\subsection{Avalanche Polynomials} The (univariate) avalanche
polynomial was introduced 
in \cite{CDR04}. This
polynomial enumerates the sizes of all principal avalanches.

\begin{defn} Let $G = (V,E,s)$ be a graph and let $v\in
\rV$. 
Let $c$ be a recurrent sandpile $c$ on 
$G$ and $v\in \rV$, the
\emph{principal avalanche} of $c$ at $v$ is the 
sequence of vertex topplings
resulting from the stabilization of the sandpile $c+1_v$.
\end{defn}

\begin{defn} The avalanche polynomial for a graph $G$ is defined
as
\[ \sA_{G}(x) = \sum \lambda_{m}x^{m}, \] where $\lambda_{m}$ is the
number of principal avalanches of size $m$.
\end{defn}


\begin{ex} \label{ex:CycleAvalanche} 
In this example, we consider the 3-cycle $C_3$. The
recurrent sandpiles on $C_3$ are $(1,0), (0,1),$ and $(1,1)$.  
The table in Figure \ref{table:C3} records
the size of the avalanche for the corresponding recurrent and vertex.
Therefore, 
$\sA_{C_{3}} (x) = 2x^{2} + 2x + 2$.

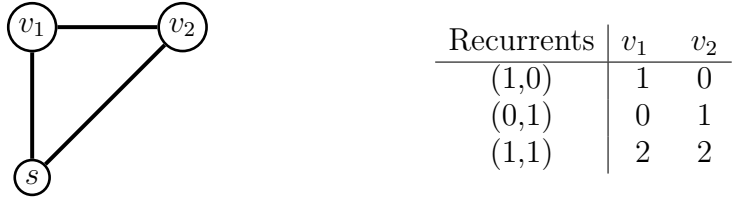
\begin{figure}[!htbp]
\begin{minipage}[c]{0.3\linewidth}
\centering
\begin{tikzpicture} 
\tikzstyle{VertexStyle} = [shape = circle, draw = black,
                                          inner sep = 2pt, outer sep = 0.5pt, minimum size = 2mm,
                                         line width = 1pt]
\SetUpEdge[lw=1.5pt]
\SetGraphUnit{2} 
\tikzset{EdgeStyle/.style={-}}
\Vertex[Math,L=v_1]{1}
\EA[Math,L=v_2](1){2}
\SO[Math,L=s](1){3}
\Edges(1,2,3,1)
\end{tikzpicture}
\end{minipage}
\begin{minipage}[c]{0.45\linewidth}
\centering
\begin{tabular}{c|cc}
{Recurrents} & $v_{1}$ \hspace{2pt} & $v_{2}$ \\ \hline
(1,0) & 1 & 0 \\
(0,1) & 0 & 1 \\
(1,1) & 2 & 2 
\end{tabular}
\end{minipage}
\caption{Principal avalanche sizes on $C_{3}$.}
\label{table:C3}
\end{figure}

\end{ex}

\subsection{Multivariate Avalanche Polynomial}

The (univariate) avalanche polynomial does not contain any information
regarding which vertices topple in a given  principal avalanche. Here
we introduce the \emph{multivatiate avalanche polynomial} that encodes
this information.  

\begin{defn} Let $G = (V,E,s)$ be a graph on $n+1$ vertices
and let $\rV = \{v_1,\dots,v_n\}$. Given $k \in \{1,\dots, n\}$ and a
recurrent sandpile $c$, the \emph{avalanche monomial} of $c$ at $v_k$ is
$$\mu_{G} (c, v_k) = \xx^{\nu(c,v_{k})} = \prod_{i=1}^n x_i^{f_i}$$ 
where $\nu(c,v_{k}) = (f_{1},\dots, f_{n})$ is the toppling vector of the
stabilization of $c+1_{v_{k}}$. 
\end{defn}

\begin{defn} Let $G = (V,E,s)$ be a graph. 
The \emph{multivariate avalanche polynomial} of $G$ is defined by 
$$ \sA_G(x_1,\dots,x_n)=\sum_{c\in \sg(G)} \sum_{v\in\rV}
\mu_{G} (c, v).$$  
\end{defn}

Note that the multivariate avalanche polynomial is the sum of all
possible avalanche monomials. In what follows the term ``avalanche
polynomial'' will refer to the multivariate case.



\begin{ex}
\label{ex:CycleToppling} 
As in Example \ref{ex:CycleAvalanche} we have recurrents $(1,0),
(0,1),$ and $(1,1)$ in $C_{3}$. The table in Figure
\ref{table:C3multi} records the toppling 
vector $\nu(c,v_{i})$ for the
principal avalanche of $c$ at $v_i$. 

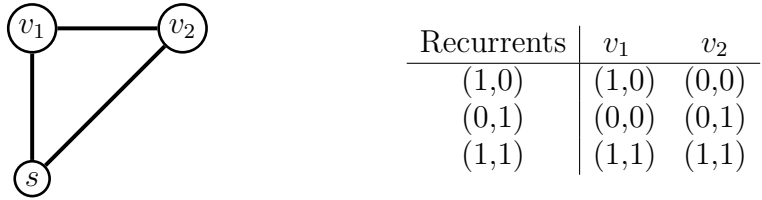
\begin{figure}[!htbp]
\begin{minipage}[c]{0.3\linewidth}
\centering
\begin{tikzpicture} 
\tikzstyle{VertexStyle} = [shape = circle, draw = black,
                                          inner sep = 2pt, outer sep = 0.5pt, minimum size = 2mm,
                                         line width = 1pt]
\SetUpEdge[lw=1.5pt]
\SetGraphUnit{2} 
\tikzset{EdgeStyle/.style={-}}
\Vertex[Math,L=v_1]{1}
\EA[Math,L=v_2](1){2}
\SO[Math,L=s](1){3}
\Edges(1,2,3,1)
\end{tikzpicture}
\end{minipage}
\begin{minipage}[c]{0.45\linewidth}
\centering
\begin{tabular}{ c|c c}
Recurrents & $v_{1}$ \hspace{2pt} & $v_{2}$ \\ \hline
(1,0) & (1,0) & (0,0) \\
(0,1) & (0,0) & (0,1) \\
(1,1) & (1,1) & (1,1)  
\end{tabular}
\end{minipage} 
\caption{Toppling vectors on $C_{3}$.}
\label{table:C3multi}
\end{figure}

From this table we can easily read the avalanche monomials.  For
example, $\mu_{C_3}((1,0),v_1) = x_1^1x_2^0 = x_1$. Adding all
avalanche monomials gives the avalanche polynomial

$$\sA_{C_{3}} (x_1,x_2) = x_1^1 x_2^0 +2 x_1^0 x_2^0 + x_1^0 x_2^1 + 2
x_1^1 x_2^1 =  2x_1 x_2 + x_1 + x_2 + 2.$$ 
\end{ex}

Note that the univariate avalanche polynomial for a graph $G$ can
be recovered from the multivariate avalanche polynomial by
substituting each $x_{i}$ by $x$, i.e.,  

\[\sA_{G}(x) = \sA_{G}(x,\dots, x).\]

It is a well-known fact that the sandpile group of an undirected graph
is independent of the choice of sink \cite[Proposition
1.1]{CR00}. Nevertheless, the structure of 
the individual recurrent sandpiles may differ. This implies that the
avalanche polynomial of a graph is dependent on the choice of
sink. For this reason, we will always fix a sink before discussing the
avalanche polynomial of a graph. Nevertheless, 
for certain graphs like cycles and complete graphs, the avalanche
polynomial does not depend on the choice of sink. For other families
such as wheel and fan graphs there is a natural choice of sink, namely
the dominating vertex. For trees, our method computes the avalanche
polynomial for any choice of sink.

\subsection{Burst Size}

There are other evaluations of the multivariate avalanche polynomial
$\sA_{G}(x_{1},\dots, x_{n})$ that are 
relevant in the larger field of sandpile groups. 
In \cite{L15}, Levine introduces the concept of \emph{burst size} to
prove a conjecture of Poghosyan, Poghosyan, Priezzhev, Ruelle
\cite{PPPR11} on the relationship between the threshold state of the
fixed-energy sandpile and the stationary state of Dhar's abelian
sandpile. 

\begin{defn}
Let $G = (V,E,s)$ be a graph and $c$ be a recurrent sandpile on $G$. Given $v\in
\rV$, define the \emph{burst size} of $c$ at $v$ as
$$\text{av}(c,v):= |c'| - |c| + 1,$$
where the sandpile $c'$ is defined such that $c'\oplus 1_{v} = c$ and
$|c|$ denotes the number of grains of sand in $c$.
Equivalently, $\text{av}(c,v)$ is the number of grains that fall into
the sink $s$ during the stabilization of $c'+1_{v} \leadsto c$. 
\end{defn}

Let $G = (V,E,s)$ be a \emph{simple graph} and  $\sA_{G}(x_{1},\dots,
x_{n})$ be its multivariate avalanche polynomial. Now, let $x_{i} = 1$
for each vertex $v_{i}$ that is not adjacent to the sink $s$. Also,
for each vertex $v_{j}$ adjacent to $s$, let $x_{j} = x$. 
The resulting univariate polynomial $\sB(x) = \sum_{k} b_{k}x^{k}$
satisifes the condition that $b_{k}$ is the number of principal
avalanches with burst size $k$.


\section{Toppling Polynomials of Trees}\label{trees} 

Let $T$ be a tree on $n+1$ vertices
labelled $v_{1},\dots, v_{n},s$. Assume further that $T$ is
rooted at the sink $s$. It is a basic observation that $T$ has only 
one recurrent, namely $\smax_{T}$.  So
$$\sA_T(x_{1},\dots, x_{n}) = \sum_{v\in\rV} \mu_T(\smax_{T},v).$$

As noted in \cite{CDR04} any tree can be constructed from a
single vertex using two operations $\phi$ and $+$ defined below.  

\begin{defn}
\label{defn:root} For two trees $T$ and $T'$ rooted at $s$ and $s'$
respectively, the operation $+$, called \emph{tree addition},
identifies $s$ and $s'$.  For a tree $T$ rooted at $s$, the operation
$\phi$, called \emph{grafting} or \emph{root extension}, refers to
adding an edge from $s$ to a new root $s'$.
\end{defn}

The operations $\phi$ and $+$ can be seen in Figure \ref{fig:phi+}.
Theorem \ref{thm:ToppOfTrees} explains what happens to the toppling
polynomial of a tree under these operations.

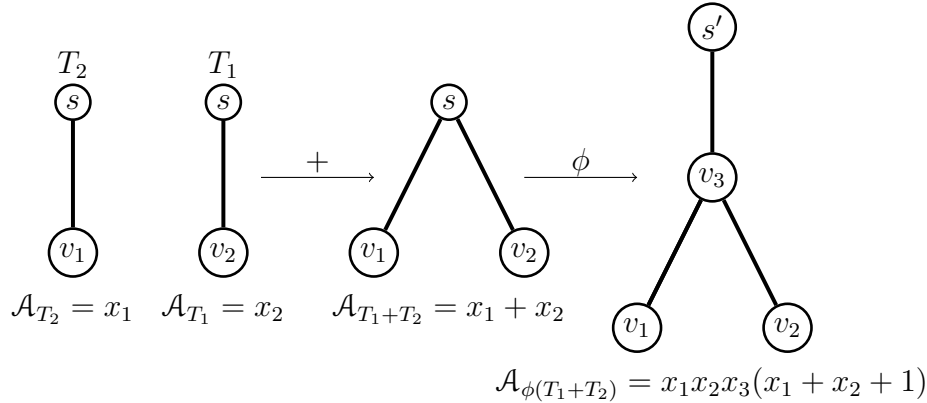
\begin{figure}[!h]
\begin{center}
\begin{tikzpicture}[scale=1]
\tikzstyle{VertexStyle} = [shape = circle, draw = black, 
                                          inner sep = 2pt, outer sep = 0.5pt, minimum size = 2mm,
                                         line width = 1pt]
\SetUpEdge[lw=1.5pt] 
\SetGraphUnit{2} 
\tikzset{EdgeStyle/.style={-}}
\Vertex[Math,L=s]{1}
\node at (0,.5) {$T_1$};
\node at (-2,.5) {$T_2$};
\SO[Math,L=v_2](1){2}
\node at (0,-2.75) {$\sA_{T_1}=x_2$};
\Edges(1,2)
\WE[Math,L=s](1){3}
\SOWE[Math,L=v_1](1){4}
\node at (-2,-2.75) {$\sA_{T_2}=x_1$};
\Edges(3,4)
\draw [->] (.5,-1) -- (2,-1);
\node at (1.25,-0.8) {+};
\coordinate (5) at (2,0);
\coordinate (8) at (1,0);
\EA[Math,L=s](8){7}
\SO[Math,L=v_1](5){6}
\EA[Math,L=v_2](6){9}
\Edges(6,7,9)
\node at (3,-2.75) {$\sA_{T_1 + T_2}=x_1+x_2$};
\draw [->] (4,-1)--(5.5,-1);
\node at (4.75,-0.8) {$\phi$};
\coordinate (11) at (4.5,-1);
\coordinate (14) at (5.5,-1);
\EA[Math,L=v_3](11){12}
\NO[Math,L=s'](12){13}
\SO[Math,L=v_1](14){15}
\EA[Math,L=v_2](15){16}
\Edges(13,12,15,12,16)
\node at (6.5,-3.75) {$\sA_{\phi(T_1 + T_2)}=x_1x_2x_3(x_1+x_2+1)$};
\end{tikzpicture}
\caption{Effect of $+$ and $\phi$ operations on the avalanche polynomial}
\label{fig:phi+}
\end{center}
\end{figure}

\begin{thm}
\label{thm:ToppOfTrees} Let $\sA_{T}$, $\sA_{T_1}$, and $\sA_{T_2}$ be
the avalanche polynomials of trees $T$, $T_1$, and $T_2$,
respectively. Then 
\begin{enumerate}
\item $\sA_{T_1+T_2}= \sA_{T_1} +\sA_{T_2}$,
\item $\sA_{\phi(T)}=x_1x_2 \cdots x_n(\sA_{T}+1)$, where 
$n = |\rV(\phi(T))|$.
\end{enumerate}
 \end{thm}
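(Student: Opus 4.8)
The plan is to analyze how a single principal avalanche behaves under each of the two tree operations, working directly from the definitions of toppling vector and stabilization together with Corollary~\ref{lem:ToppleOnce} (which guarantees each vertex topples at most once when a grain is added to a vertex adjacent to the sink).

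\medskip

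\noindent\textbf{Part (1): tree addition.} Write $T = T_1 + T_2$, where $T_1$ and $T_2$ share only the sink $s$. Since $\smax_T$ restricts to $\smax_{T_i}$ on $\rV(T_i)$ (degrees are unchanged away from the shared sink, and a vertex of $T_i$ that is a neighbor of $s$ keeps the same degree because $T_i$ is a tree glued only at $s$), and since in a tree sand only flows toward the sink, I would argue that the principal avalanche of $\smax_T$ at a vertex $v \in \rV(T_i)$ stays entirely inside $T_i$: a vertex topples only after receiving enough sand, and the only path by which sand can leave $T_i$ is through $s$, which is the sink and never topples. Hence the toppling vector $\nu(\smax_T, v)$ is supported on $\rV(T_i)$ and agrees there with $\nu(\smax_{T_i}, v)$; its associated monomial is $\mu_{T_i}(\smax_{T_i}, v)$ viewed in the larger variable set (the other variables appear to the power $0$). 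Summing over all $v \in \rV(T) = \rV(T_1) \sqcup \rV(T_2)$ then gives $\sA_{T_1+T_2} = \sA_{T_1} + \sA_{T_2}$ directly.

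\medskip

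\noindent\textbf{Part (2): grafting.} Let $\phi(T)$ have new root $s'$, and let the old root of $T$ (formerly the sink) be relabeled as a genuine vertex, say $v_n$, adjacent to $s'$; the non-sink vertices of $\phi(T)$ are all the vertices of the original tree $T$ (root included). The key point is that $v_n$ is the unique neighbor of the sink $s'$, so by Corollary~\ref{lem:ToppleOnce} every vertex topples at most once in any principal avalanche of $\phi(T)$, and moreover $v_n$ must topple in \emph{every} principal avalanche in order for any sand to reach the sink. I would then show two things. First, I claim that every non-sink vertex topples exactly once in every principal avalanche of $\smax_{\phi(T)}$: this is essentially Dhar's burning criterion / the structure of $\smax$ — adding a grain to any $v_k$ makes it unstable, it topples and pushes toward $s'$, and since each internal vertex of $\phi(T)$ sits at degree one more than its degree in $T$ (the root $v_n$ gains the edge to $s'$; but actually only the root changes degree — here I must be careful: in $T$, the internal vertices have some degree, and in $\phi(T)$ the only vertex whose degree changes is the old root, which gains one), the cascade propagates all the way down. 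The cleanest route is: $\smax_{\phi(T)}$ is recurrent, and by Proposition~\ref{prop:Burn} firing the sink makes every vertex topple exactly once; adding a grain to $v_k$ and stabilizing produces a toppling vector that I claim equals $(1,\dots,1)$ plus the toppling vector of the principal avalanche of $\smax_T$ at $v_k$ inside $T$ (with $v_n$'s coordinate interpreted appropriately). This accounts for the factor $x_1 x_2 \cdots x_n$ (everyone topples once) times the ``extra'' topplings, which reproduce the avalanche in $T$ — giving $x_1\cdots x_n \cdot \sA_T$ — plus the case analysis of where the extra grain starts; the ``$+1$'' term comes from the recurrent-plus-nothing baseline, i.e.\ the contribution in which the added grain only causes the minimal sweep. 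Summing $\mu_{\phi(T)}(\smax_{\phi(T)}, v_k)$ over $k = 1,\dots,n$ yields $x_1\cdots x_n(\sA_T + 1)$.

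\medskip

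\noindent\textbf{Main obstacle.} The delicate step is Part~(2): pinning down precisely why the toppling vector of the principal avalanche of $\smax_{\phi(T)}$ at $v_k$ decomposes as ``the all-ones sweep'' plus ``the $T$-avalanche at $v_k$.'' This needs a careful argument — perhaps by induction on the structure of $T$, or by directly comparing the two stabilization processes and using Proposition~\ref{prop:laplacian} together with the abelian property (Proposition~\ref{prop:THEstabilization}) to add the grain at $v_k$, then separately run the ``sink-firing'' sweep. Getting the bookkeeping right for the vertex $v_n$ (old root, now adjacent to $s'$), and confirming it topples exactly the right number of times in each scenario, is where I expect the real work to lie; the rest follows by linearity of $\sA$ in the sum over $(c,v)$.
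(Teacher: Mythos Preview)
Your Part~(1) is correct and matches the paper's argument exactly: the sink separates $T_1$ from $T_2$, so an avalanche started in one never reaches the other.

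For Part~(2) you have identified the right target --- the toppling vector of the stabilization of $\smax_{\phi(T)} + 1_{v_k}$ should equal the all-ones vector plus the $T$-avalanche vector at $v_k$ --- but your route to it is tangled, and one intermediate claim is wrong as stated. You write that ``every non-sink vertex topples exactly once in every principal avalanche of $\smax_{\phi(T)}$''; this is false (take $\phi(T)$ to be the path $s'$--$v_2$--$v_1$ and add a grain at $v_1$: then $v_1$ topples twice). What is true is that every vertex topples \emph{at least} once, with the excess given by the $T$-avalanche, and that is what you actually go on to use.

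The paper removes the obstacle you flag by running the two pieces in the \emph{opposite} order. Write $s$ for the old root of $T$ (your $v_n$), the unique neighbor of the new sink $s'$. First treat the case $v_k = s$: here $1_s$ is literally the ``fire the sink'' sandpile $u$ of Proposition~\ref{prop:Burn}, so every vertex topples exactly once and $\mu_{\phi(T)}(\smax_{\phi(T)}, s) = x_1\cdots x_n$. This single term is the ``$+1$''. Second, for $v_k \neq s$, observe that the toppling sequence $\nu_T(\smax_T, v_k)$ is a \emph{legal} sequence of topplings in $\phi(T)$ applied to $\smax_{\phi(T)} + 1_{v_k}$, because the degree of every vertex in $\rV(T)$ is unchanged by $\phi$ and $\smax_{\phi(T)}$ restricts to $\smax_T$ on $\rV(T)$. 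Performing it lands on exactly $\smax_{\phi(T)} + 1_s$, since the $T$-avalanche delivers a net total of one grain to the old sink $s$. Now apply the first case. By the abelian property (Proposition~\ref{prop:THEstabilization}) the full toppling vector is the sum of the two, so $\mu_{\phi(T)}(\smax_{\phi(T)}, v_k) = \mu_T(\smax_T, v_k)\cdot x_1 \cdots x_n$. Summing over all $n$ non-sink vertices of $\phi(T)$ gives $x_1\cdots x_n(\sA_T + 1)$.

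So the missing idea is simply: run the $T$-avalanche \emph{first} and observe that it reduces the problem to the $v_k = s$ case, which is Dhar's burning criterion. No further bookkeeping at $v_n$ is needed.
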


\begin{proof} Under tree addition, trees $T_1$ and $T_2$ are only
connected at the sink $s$. Since the sink never topples, a principal
avalanche at a vertex in $T_1$ will never affect the vertices in
$T_2$, and vice versa. Therefore, $\sA_{T_1+T_2}= \sA_{T_1}
+\sA_{T_2}$.
For the second part, let $T$ be a tree on $n$ vertices with sink $s$. 
Let $\smax_{\phi(T)}$ and $\smax_{T}$ be
the maximum stable sandpile on $\phi(T)$ and $T$, respectively.  First
we consider $\smax_{\phi(T)} 
\oplus 1_s$.  By Proposition \ref{prop:Burn},
$$\mu_{\phi(T)} (\smax_{\phi(T)}, s) = x_1x_2\cdots x_n.$$
Now consider $\smax_{\phi(T)}\oplus 1_{v_k}$ where $v_k\neq s$. Note
that when we apply the toppling sequence 
$\nu_T(\smax_T,v_k)$ to $\smax_{\phi(T)} + 1_{v_k}$,  we get the
sandpile $\smax_{\phi(T)} + 1_s$.  Thus, 
each vertex will now topple once more. So for each $v_k$ with $v_k\neq
s$, 
$$\mu_{\phi(T)}(\smax_{\phi(T)},v_k) = (x_1x_{2}\cdots x_n ) \cdot
\mu_{T} (\smax_T, v_k).$$ 
Therefore,
$$\sA_{\phi(T)} = (x_1x_2 \cdots x_n) + (x_1x_2 \cdots x_n) \cdot
\sA_{T} =x_1x_2 \cdots x_n(\sA_{T}+1).$$  
\end{proof}

 Note that using Theorem \ref{thm:ToppOfTrees} we can
compute the multivariate avalanche polynomial of any tree.
Furthermore, 
as noted in \cite{CDR04}, it is possible for two non-isomorphic trees
to have the same univariate avalanche polynomial. In contrast, the
multivariate avalanche polynomial distinguishes between labeled trees.

\begin{figure}[!hbpt]
\begin{center}
\begin{multicols}{2}
\begin{tikzpicture}[scale=.6]
\tikzstyle{VertexStyle} = [shape = circle, draw = black, 
                                          inner sep = 2pt, outer sep = 0.5pt, minimum size = 2mm,
                                         line width = 1pt]
\SetUpEdge[lw=1.5pt] 
\SetGraphUnit{2} 
\tikzset{EdgeStyle/.style={-}}
\Vertex[Math,L=s]{0}
\node at (-2,0) {$T_1$};
\SOWE[Math,L=4](0){1}
\SOEA[Math,L=7](0){2}
\SO[Math,L=7](1){3}
\SO[Math,L=8](2){4}
\WE[Math,L=8](4){6}
\EA[Math,L=8](4){8}
\coordinate (5) at (3,-4);
\WE[Math,L=9](5){7}
\EA[Math,L=8](7){10}
\coordinate (9) at (-1,-6);
\WE[Math,L=8](9){11}
\EA[Math,L=8](11){12}
\SO[Math,L=10](7){13}
\Edges(0,1,3,11,3,12)
\Edges(0,2,4,2,10,2,6,2,8,2,7,13)
\end{tikzpicture}

\columnbreak

\begin{tikzpicture}[scale=.6]
\tikzstyle{VertexStyle} = [shape = circle, draw = black, 
                                          inner sep = 2pt, outer sep = 0.5pt, minimum size = 2mm,
                                         line width = 1pt]
\SetUpEdge[lw=1.5pt] 
\SetGraphUnit{2} 
\tikzset{EdgeStyle/.style={-}}
\Vertex[Math,L=s]{0}
\node at (-2,0) {$T_2$};
\SOWE[Math,L=4](0){1}
\coordinate (2) at (-2,-1.5);
\SO[Math,L=7](2){3}
\coordinate (6) at (-2,-3);
\SO[Math,L=9](6){4}
\coordinate (7) at (-2,-4.5);
\SO[Math,L=10](7){5}
\SOEA[Math,L=7](0){8}
\coordinate (9) at (2.5,-4);
\WE[Math,L=8](9){10}
\EA[Math,L=8](10){11}
\EA[Math,L=8](11){12}
\coordinate (13) at (1.5,-4);
\WE[Math,L=8](13){14}
\EA[Math,L=8](14){15}
\EA[Math,L=8](15){16}
\Edges(0,1,3,4,5)
\Edges(0,8,16,8,15,8,14,8,12,8,11,8,10)
\end{tikzpicture}
\end{multicols}
\caption{$\sA_{T_1}(x) = \sA_{T_2}(x) = x^{10}+x^9+6x^8+2x^7+x^4$}
\label{fig:noniso}
\end{center}
\end{figure}

Figure \ref{fig:noniso} gives an example, first presented in
\cite{CDR04}, of two non isomorphic trees with the same univariate
avalanche polynomial. The vertices are labeled with the size of the 
principal avalanche starting at that vertex. One can clearly see that 
$T_1$ and $T_2$ have the same univariate avalanche
polynomial. However, they have different multivariate avalanche
polynomials.  Let's examine the right subtrees of $T_1$ and $T_2$,
denoted by $R_{1}$ and $R_{2}$, respectively. 

\begin{figure}[!h]
\centering
\begin{multicols}{2}

\begin{tikzpicture}[scale=.6]
\tikzstyle{VertexStyle} = [shape = circle, draw = black, 
                                          inner sep = 2pt, outer sep = 0.5pt, minimum size = 2mm,
                                         line width = 1pt]
\SetUpEdge[lw=1.5pt] 
\SetGraphUnit{2} 
\tikzset{EdgeStyle/.style={-}}
\Vertex[Math,L=s]{s}
\node at (-2,0) {$R_1$};
\SOWE[Math,L=v_7](s){v7}
\SO[Math,L=v_8](s){v8}
\SOEA[Math,L=v_9](s){v9}
\EA[Math,L=v_{10}](v9){v10}
\WE[Math,L=v_{6}](v7){v6}
\SO[Math,L=v_5](v7){v5}
\Edges(s,v6,s,v7,v5)
\Edges(s,v8,s,v9,s,v10)
\end{tikzpicture}
    
\columnbreak

\begin{tikzpicture}[scale=.6]
\tikzstyle{VertexStyle} = [shape = circle, draw = black, 
                                          inner sep = 2pt, outer sep = 0.5pt, minimum size = 2mm,
                                         line width = 1pt]
\SetUpEdge[lw=1.5pt] 
\SetGraphUnit{2} 
\tikzset{EdgeStyle/.style={-}}
\Vertex[Math,L=s]{s}
\node at (-2,0) {$R_2$};
\SO[Math,L=v_7](s){v7}
\SOEA[Math,L=v_8](s){v8}
\EA[Math,L=v_9](v8){v9}
\EA[Math,L=v_{10}](v9){v10}
\WE[Math,L=v_{6}](v7){v6}
\WE[Math,L=v_5](v6){v5}
\Edges(v5,s,v6,s,v7,s,v8,s,v9,s,v10)
\end{tikzpicture}
\end{multicols}
\label{F:subtrees}
\caption{Labelled right subtrees of $T_{1}$ and $T_{2}$.}
\end{figure}
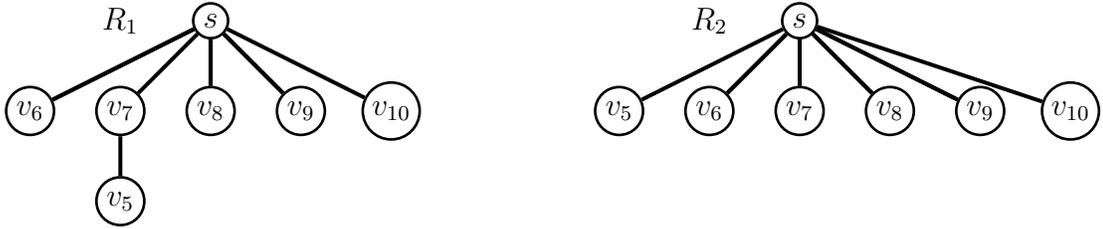 

The avalanche polynomial of $R_{1}$ is
$x_6 + x_5x_7(x_{5}+1) + x_8 + x_9 +x_{10}$ 
and the avalanche polynomial of $R_{2}$ 
is $x_5 + x_6 + x_7 + x_8 + x_9 + x_{10}$. 
Note that the first polynomial has total degree 3 and the second polynomial
has total degree 1. The avalanche polynomials of the left 
subtrees of $T_1$ and $T_2$ will be on  a disjoint set of variables.
Thus, the avalanche polynomials for $T_1$ and $T_2$ must be distinct.


 
\begin{cor}\label{cor:PhiIsEffective} 
Let $T$ and $T'$ be  two trees on $n+1$
vertices. Then, 
$\sA_T(x_1,\dots,x_n) = \sA_{T'}(x_1,\dots,x_n)$ if and only if
$\sA_{\phi(T)}(x_1,\dots,x_{n+1}) = \sA_{\phi(T')}(x_1,\dots,x_{n+1})$. 
\end{cor}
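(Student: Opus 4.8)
The plan is to read this off directly from part~(2) of Theorem~\ref{thm:ToppOfTrees}, which already does all the work; the corollary is then just the observation that multiplication by a fixed nonzero monomial is injective on a polynomial ring.

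First I would set up labelings so that the statement is literally meaningful. Write the non-sink vertices of $T$ as $v_1,\dots,v_n$, and likewise for $T'$. Recall that $\phi$ adjoins a brand-new root $s'$, demoting the old sink $s$ of $T$ to an ordinary (non-sink) vertex; declare that demoted vertex to be $v_{n+1}$ in $\phi(T)$, and do the same in $\phi(T')$. With these conventions $\sA_T,\sA_{T'}\in\zz[x_1,\dots,x_n]$ while $\sA_{\phi(T)},\sA_{\phi(T')}\in\zz[x_1,\dots,x_{n+1}]$, and Theorem~\ref{thm:ToppOfTrees}(2) reads
$$\sA_{\phi(T)}(x_1,\dots,x_{n+1}) = x_1 x_2\cdots x_{n+1}\bigl(\sA_T(x_1,\dots,x_n)+1\bigr),$$
together with the analogous identity for $T'$.

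The forward implication is then immediate: if $\sA_T = \sA_{T'}$, add $1$ to both sides and multiply by the monomial $x_1\cdots x_{n+1}$ to conclude $\sA_{\phi(T)} = \sA_{\phi(T')}$. For the converse I would invoke that $\zz[x_1,\dots,x_{n+1}]$ is an integral domain: assuming $\sA_{\phi(T)} = \sA_{\phi(T')}$, the two displayed formulas give $x_1\cdots x_{n+1}(\sA_T+1) = x_1\cdots x_{n+1}(\sA_{T'}+1)$, and since $x_1\cdots x_{n+1}$ is a nonzero element of this domain we may cancel it, obtaining $\sA_T+1 = \sA_{T'}+1$ and hence $\sA_T = \sA_{T'}$.

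I do not expect any real obstacle: the entire mathematical content sits in Theorem~\ref{thm:ToppOfTrees}(2), and the only point one must not skip is that the cancellation in the converse direction is valid precisely because a polynomial ring over $\zz$ has no zero divisors. The one cosmetic care needed is to keep the variable count and the labeling of the demoted sink consistent between $T$, $T'$, and their root extensions, so that the two sides of each claimed equality genuinely lie in the same polynomial ring.
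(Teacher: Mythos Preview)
Your proof is correct and follows essentially the same route as the paper: both directions are read off from Theorem~\ref{thm:ToppOfTrees}(2), with the converse handled by cancelling the common monomial factor. Your version is slightly more careful in making the labeling of the demoted sink explicit and in naming the integral-domain property that justifies the cancellation, but the underlying argument is identical.
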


\begin{proof} 
Suppose that $\sA_{T} = \sA_{T'}$.  Theorem \ref{thm:ToppOfTrees} implies
\[\sA_{\phi(T)} = x_1x_2 \cdots x_n(\sA_{T}+1) = x_1x_2 \cdots
x_n(\sA_{T'}+1) = \sA_{\phi(T')}.\] 
Now assume $\sA_{\phi(T)} = \sA_{\phi(T')}$. Then $x_1x_2 \cdots x_n(\sA_{T}+1) = x_1x_2 \cdots
x_n(\sA_{T'}+1)$. This clearly implies
$\sA_{T} = \sA_{T'}$. 
\end{proof}

 \begin{thm}\label{uniqueness} 
Let $T$ be a tree on $n+1$ vertices. If $\sA_T(x_1,\dots, x_n) =
\sA_{T'}(x_{1},\dots, x_{n})$ for some tree $T'$,  then $T = T'$.
 \end{thm}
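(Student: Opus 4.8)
The plan is to prove this by induction on $n$, the number of non-sink vertices, reconstructing $T$ from $\sA_T$ one level at a time, starting from the sink. Since $T$ is a tree rooted at $s$, let $r_1,\dots,r_k$ be the children of $s$, and for each $j$ let $T_j$ be the subtree of $T$ consisting of $r_j$ together with all of its descendants, regarded as a tree rooted at the sink $r_j$; set $V_j=V(T_j)$. Then $T=\phi(T_1)+\cdots+\phi(T_k)$, so Theorem~\ref{thm:ToppOfTrees} yields
\[
\sA_T \;=\; \sum_{j=1}^{k} M_j\bigl(\sA_{T_j}+1\bigr),\qquad M_j=\prod_{v\in V_j}x_v,
\]
where the $V_j$ partition $\{v_1,\dots,v_n\}$ and each $\sA_{T_j}$ involves only the variables indexed by $V_j\setminus\{r_j\}$ (the sink of $T_j$ never topples).

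The first key step is to observe that this decomposition can be read directly off $\sA_T$. Since $M_j$ is a nonconstant monomial and $M_j(\sA_{T_j}+1)$ involves only the variables of $V_j$, every monomial it contributes is divisible by $M_j$ and supported inside $V_j$, hence has support exactly $V_j$; and the monomial $M_j$ itself does occur, because the coefficients of an avalanche polynomial are nonnegative, so nothing cancels. As the $V_j$ are pairwise disjoint and nonempty, it follows that the collection of supports of the monomials of $\sA_T$ is exactly $\{V_1,\dots,V_k\}$. Grouping the monomials of $\sA_T$ by their support therefore recovers $k$, the partition $\{V_j\}$, and each summand $M_j(\sA_{T_j}+1)$, from which $\sA_{T_j}$ is obtained by dividing by $M_j$ and subtracting $1$.

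The second key step is to recover the root $r_j$ of each piece from $\sA_{T_j}$, so that the inductive hypothesis applies. If $|V_j|=1$ then $T_j$ is a single vertex and $r_j$ is that vertex. If $|V_j|\ge 2$, then $r_j$ is the sink of $T_j$ and never topples, so $x_{r_j}$ appears in no monomial of $\sA_{T_j}$; while for any non-sink vertex $v$ of $T_j$, adding a grain to $\smax_{T_j}$ at $v$ makes $v$ unstable, so $v$ topples at least once and $x_v \mid \mu_{T_j}(\smax_{T_j},v)$, whence $x_v$ occurs in $\sA_{T_j}$. Thus $r_j$ is the unique variable of $V_j$ missing from $\sA_{T_j}$. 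Now each $T_j$ is a tree on $|V_j|\le n$ vertices with strictly fewer than $n$ non-sink vertices, so by induction $\sA_{T_j}$, together with the identified sink $r_j$, determines $T_j$; reassembling, $T$ is the disjoint union of the $T_j$ with $s$ joined to each $r_j$, and hence is determined by $\sA_T$. Running the same argument on any tree $T'$ with $\sA_{T'}=\sA_T$ forces an identical child partition, identical subtree polynomials, identical roots $r_j$, hence $T'_j=T_j$ by induction and $T'=T$; the base case $n\le 1$ is immediate. The argument is routine once Theorem~\ref{thm:ToppOfTrees} is in hand, and the one point that requires genuine thought is the support observation of the first step, which is what exposes the recursive structure hidden inside the single polynomial $\sA_T$.
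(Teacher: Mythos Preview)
Your proof is correct and follows essentially the same strategy as the paper: decompose $T$ as $\phi(T_1)+\cdots+\phi(T_k)$, use Theorem~\ref{thm:ToppOfTrees} to separate the pieces via their pairwise disjoint variable supports, and then induct on the subtrees (you induct on $n$, the paper on height). Your second key step---identifying each $r_j$ as the unique variable of $V_j$ absent from $\sA_{T_j}$---makes explicit a point the paper leaves implicit when it invokes Corollary~\ref{cor:PhiIsEffective}, namely that the matched subtrees $T_1$ and $T'_j$ must have the same sink before the inductive hypothesis applies.
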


\begin{proof} 
We use induction on the height of $T$.  
Recall that
the \emph{height} of a tree is the number of edges in the longest path
between the root and a leaf.  
Suppose $T$ has height $0$, that is, $T$ consists of one vertex. Then
$\sA_{T} = 0$. Clearly, if $T'$ has two or more vertices then
$\sA_{T'} \neq 0$. Since $\sA_{T} = \sA_{T'}$, then $T'$ must also
consist of one vertex and $T=T'$. 
Now suppose that $T$ has height $h>0$. In this case, the sink $s$ of $T$
must have at least one child. Assume $s$ has degree $d$. 
Deleting $s$ creates
$d$ trees $T_1,T_2,\dots,T_{d}$. We have that $T = \phi(T_{1}) +
\phi(T_{2}) + \dotsm + \phi(T_{d})$. So $\sA_{T}$ is the sum of $d$
multivariate polynomials with pairwise disjoint supports
\[\sA_{T} = \sA_{\phi(T_{1})} +  \sA_{\phi(T_{2})} + \dotsm +
\sA_{\phi(T_{d})}. \] 
Since
$\sA_{T} = \sA_{T'}$, then $\sA_{T'}$ must also satisfy the same
condition. Hence the sink of $T'$ must also have degree $d$ and 
$T' = \phi(T'_{1}) + \phi(T'_{2}) + \dotsm + \phi(T'_{d})$ for some
trees $T'_1,T'_2,\dots,T'_{d}$. Since the supports are pairwise
disjoint, we must also have that
$\sA_{\phi(T_{1})} = \sA_{\phi(T'_{j})}$, for some $j$. Corollary
\ref{cor:PhiIsEffective} implies 
$\sA_{T_{1}} = \sA_{T'_{j}}$. But $T_{1}$ is a tree of height
$h-1$. By induction, $T_{1} = T'_{j}$. Therefore,
after relabeling the subtrees in $T'$, we must have $T_{i} = T'_{i}$
for all $1\leq i \leq d$ and $T = T'$.
\end{proof}


\section{Toppling Polynomials of Cycles}\label{cycles}

Now, we will compute the avalanche
polynomial of the cycle graph $C_{n+1}$ on $n+1$ vertices. Unless
otherwise stated, we will label the vertices $s, v_1, v_2, \dots, v_n$
in a clockwise manner.  As shown in Example \ref{ex:CycleAvalanche},
$\sA_{C_{3}}(x_{1},x_{2}) = 2x_1x_2 + x_1 + x_2 + 2$. We will
denote by $C_{2}$ the graph with two vertices and two edges
between these vertices. 
It is clear that $\sA_{C_{2}}(x_{1}) = x_{1} + 1$.

In this section, we will write sandpiles and toppling vectors
as strings instead of vectors. For example, the string
$1^{p-1}01^{n-p}$ denotes the sandpile with no grains of sand at
vertex $v_p$ and $1$ grain of sand at every other vertex. We will also
make the convention that a bit raised to the $0$ power does not appear
in the string, e.g., $0^{2}1^{0}0^{2} = 0^{4}$. In the 
previous section, we saw that a tree has exactly one recurrent
sandpile. The cycle graph $C_{n+1}$ has exactly
$n+1$ recurrent sandpiles, namely, $\smax = 1^n$ and $b_{p} =
1^{p-1}01^{n-p}$ for $p = 1, 2, \dots, n$, see \cite{CDR04}.
 

\subsection{Toppling Sequence for the Maximal Stable Sandpile} We
first focus our attention on understanding the toppling sequences for
$1^n + 1_{v_i}$ for $1\leq i \leq n$.

\begin{ex} \label{ex:cycle:1} 
Figure \ref{cycletopple6} shows that
$\mu_{C_6}(1^5, v_2) = x_1x_2^2x_3^2x_4^2x_5$.
\end{ex}

\begin{figure}[!h]

\centering
\begin{multicols}{4}
\begin{tikzpicture}[scale=0.65]
\tikzstyle{VertexStyle} = [shape = circle, draw = black, 
                           fill=white, inner sep = 1pt, outer sep = 0.5pt, minimum size = 2mm, 
                           line width = 1pt]
\SetUpEdge[lw=1.5pt] 
\SetGraphUnit{2} 
\tikzset{EdgeStyle/.style={-}}
 \draw (1,0) arc (0:360: 1 cm);
   \Vertex[a=90 , d=1 cm, Math, L=s]{0}
   \Vertex[a=30 , d=1 cm, Math, L=1]{1}
   \Vertex[a=330 , d=1 cm, Math, L=1]{2}
   \Vertex[a=270 , d=1 cm, Math, L=1]{3}
   \Vertex[a=210 , d=1 cm, Math, L=1]{4}
   \Vertex[a=150 , d=1 cm, Math, L=1]{5}
 \draw [->] (2,0)--(3,0);
 
\tikzstyle{VertexStyle}=[draw=none]
   \Vertex[a=30 , d=1.7 cm, Math, L=v_1]{v1}
   \Vertex[a=330 , d=1.7 cm, Math, L=v_2]{v2}
   \Vertex[a=270 , d=1.7 cm, Math, L=v_3]{v3}
   \Vertex[a=210 , d=1.7 cm, Math, L=v_4]{v4}
   \Vertex[a=150 , d=1.7 cm, Math, L=v_5]{v5}
\end{tikzpicture}

\columnbreak

\begin{tikzpicture}[scale=0.65]
\tikzstyle{VertexStyle} = [shape = circle, draw = black, 
                           fill=white, inner sep = 1pt, outer sep = 0.5pt, minimum size = 2mm, 
                           line width = 1pt]
\SetUpEdge[lw=1.5pt] 
\SetGraphUnit{2} 
\tikzset{EdgeStyle/.style={-}}
 \draw (1,0) arc (0:360: 1 cm);
   \Vertex[a=90 , d=1 cm, Math, L=s]{0}
   \Vertex[a=30 , d=1 cm, Math, L=1]{1}
   \Vertex[a=270 , d=1 cm, Math, L=1]{3}
   \Vertex[a=210 , d=1 cm, Math, L=1]{4}
   \Vertex[a=150 , d=1 cm, Math, L=1]{5}
 \draw [->] (2,0)--(3,0);
 
 \tikzstyle{VertexStyle}=[shape = circle, draw = black, 
                           fill=gray!50!white, inner sep = 1pt, outer
                           sep = 0.5pt, minimum size = 2mm, 
                           line width = 1pt]
\Vertex[a=330 , d=1 cm, Math, L=2]{2}
 
\tikzstyle{VertexStyle}=[draw=none]
   \Vertex[a=30 , d=1.7 cm, Math, L=v_1]{v1}
   \Vertex[a=330 , d=1.7 cm, Math, L=v_2]{v2}
   \Vertex[a=270 , d=1.7 cm, Math, L=v_3]{v3}
   \Vertex[a=210 , d=1.7 cm, Math, L=v_4]{v4}
   \Vertex[a=150 , d=1.7 cm, Math, L=v_5]{v5}
\end{tikzpicture}

\columnbreak

\begin{tikzpicture}[scale=0.65]
\tikzstyle{VertexStyle} = [shape = circle, draw = black, 
                           fill=white, inner sep = 1pt, outer sep = 0.5pt, minimum size = 2mm, 
                           line width = 1pt]
\SetUpEdge[lw=1.5pt] 
\SetGraphUnit{2} 
\tikzset{EdgeStyle/.style={-}}
 \draw (1,0) arc (0:360: 1 cm);
   \Vertex[a=90 , d=1 cm, Math, L=s]{0}
   \Vertex[a=330 , d=1 cm, Math, L=0]{2}
   \Vertex[a=210 , d=1 cm, Math, L=1]{4}
   \Vertex[a=150 , d=1 cm, Math, L=1]{5}
 \draw [->] (2,0)--(3,0);
 
 \tikzstyle{VertexStyle}=[shape = circle, draw = black, 
                           fill=gray!50!white, inner sep = 1pt, outer sep= 0.5pt, minimum size = 2mm, 
                           line width = 1pt]
   \Vertex[a=30 , d=1 cm, Math, L=2]{1}
   \Vertex[a=270 , d=1 cm, Math, L=2]{3}
 
\tikzstyle{VertexStyle}=[draw=none]
   \Vertex[a=30 , d=1.7 cm, Math, L=v_1]{v1}
   \Vertex[a=330 , d=1.7 cm, Math, L=v_2]{v2}
   \Vertex[a=270 , d=1.7 cm, Math, L=v_3]{v3}
   \Vertex[a=210 , d=1.7 cm, Math, L=v_4]{v4}
   \Vertex[a=150 , d=1.7 cm, Math, L=v_5]{v5}
\end{tikzpicture}

\columnbreak

\begin{tikzpicture}[scale=0.65]
\tikzstyle{VertexStyle} = [shape = circle, draw = black, 
                           fill=white, inner sep = 1pt, outer sep = 0.5pt, minimum size = 2mm, 
                           line width = 1pt]
\SetUpEdge[lw=1.5pt] 
\SetGraphUnit{2} 
\tikzset{EdgeStyle/.style={-}}
 \draw (1,0) arc (0:360: 1 cm);
   \Vertex[a=90 , d=1 cm, Math, L=s]{0}
   \Vertex[a=30 , d=1 cm, Math, L=0]{1}
   \Vertex[a=270 , d=1 cm, Math, L=0]{3}
   \Vertex[a=150 , d=1 cm, Math, L=1]{5}

 \tikzstyle{VertexStyle}=[shape = circle, draw = black, 
                           fill=gray!50!white, inner sep = 1pt, outer
                           sep = 0.5pt, minimum size = 2mm, 
                           line width = 1pt]
   \Vertex[a=210 , d=1 cm, Math, L=2]{4}
   \Vertex[a=330 , d=1 cm, Math, L=2]{2}
 
\tikzstyle{VertexStyle}=[draw=none]
   \Vertex[a=30 , d=1.7 cm, Math, L=v_1]{v1}
   \Vertex[a=330 , d=1.7 cm, Math, L=v_2]{v2}
   \Vertex[a=270 , d=1.7 cm, Math, L=v_3]{v3}
   \Vertex[a=210 , d=1.7 cm, Math, L=v_4]{v4}
   \Vertex[a=150 , d=1.7 cm, Math, L=v_5]{v5}
\end{tikzpicture}
\end{multicols}

\begin{multicols}{3}

\begin{tikzpicture}[scale=0.65]
\draw [->] (-5,0)--(-3,0);
\tikzstyle{VertexStyle} = [shape = circle, draw = black, 
                           fill=white, inner sep = 1pt, outer sep = 0.5pt, minimum size = 2mm, 
                           line width = 1pt]
\SetUpEdge[lw=1.5pt] 
\SetGraphUnit{2} 
\tikzset{EdgeStyle/.style={-}}
 \draw (1,0) arc (0:360: 1 cm);
   \Vertex[a=90 , d=1 cm, Math, L=s]{0}
   \Vertex[a=30 , d=1 cm, Math, L=1]{1}
   \Vertex[a=330 , d=1 cm, Math, L=0]{2}
   \Vertex[a=210 , d=1 cm, Math, L=0]{4}

 \tikzstyle{VertexStyle}=[shape = circle, draw = black, 
                           fill=gray!50!white, inner sep = 1pt, outer
                           sep = 0.5pt, minimum size = 2mm, 
                           line width = 1pt]
   \Vertex[a=150 , d=1 cm, Math, L=2]{5}
   \Vertex[a=270 , d=1 cm, Math, L=2]{3}
\tikzstyle{VertexStyle}=[draw=none]
   \Vertex[a=30 , d=1.7 cm, Math, L=v_1]{v1}
   \Vertex[a=330 , d=1.7 cm, Math, L=v_2]{v2}
   \Vertex[a=270 , d=1.7 cm, Math, L=v_3]{v3}
   \Vertex[a=210 , d=1.7 cm, Math, L=v_4]{v4}
   \Vertex[a=150 , d=1.7 cm, Math, L=v_5]{v5}
\end{tikzpicture}

\columnbreak

\begin{tikzpicture}[scale=0.65]
\tikzstyle{VertexStyle} = [shape = circle, draw = black, 
                           fill=white, inner sep = 1pt, outer sep = 0.5pt, minimum size = 2mm, 
                           line width = 1pt]
\SetUpEdge[lw=1.5pt] 
\SetGraphUnit{2} 
\tikzset{EdgeStyle/.style={-}}
 \draw (1,0) arc (0:360: 1 cm);
   \Vertex[a=90 , d=1 cm, Math, L=s]{0}
   \Vertex[a=30 , d=1 cm, Math, L=1]{1}
   \Vertex[a=330 , d=1 cm, Math, L=1]{2}
   \Vertex[a=270 , d=1 cm, Math, L=0]{3}
   \Vertex[a=150 , d=1 cm, Math, L=0]{5}
 \draw [->] (-5,0)--(-3,0);
 
 \tikzstyle{VertexStyle}=[shape = circle, draw = black, 
                           fill=gray!50!white, inner sep = 1pt, outer sep= 0.5pt, minimum size = 2mm, 
                           line width = 1pt]
   \Vertex[a=210 , d=1 cm, Math, L=2]{4}
 
\tikzstyle{VertexStyle}=[draw=none]
   \Vertex[a=30 , d=1.7 cm, Math, L=v_1]{v1}
   \Vertex[a=330 , d=1.7 cm, Math, L=v_2]{v2}
   \Vertex[a=270 , d=1.7 cm, Math, L=v_3]{v3}
   \Vertex[a=210 , d=1.7 cm, Math, L=v_4]{v4}
   \Vertex[a=150 , d=1.7 cm, Math, L=v_5]{v5}
\end{tikzpicture}

\columnbreak

\begin{tikzpicture}[scale=0.65]
\tikzstyle{VertexStyle} = [shape = circle, draw = black, 
                           fill=white, inner sep = 1pt, outer sep = 0.5pt, minimum size = 2mm, 
                           line width = 1pt]
\SetUpEdge[lw=1.5pt] 
\SetGraphUnit{2} 
\tikzset{EdgeStyle/.style={-}}
 \draw (1,0) arc (0:360: 1 cm);
   \Vertex[a=90 , d=1 cm, Math, L=s]{0}
   \Vertex[a=30 , d=1 cm, Math, L=1]{1}
   \Vertex[a=330 , d=1 cm, Math, L=1]{2}
   \Vertex[a=270 , d=1 cm, Math, L=1]{3}
   \Vertex[a=210 , d=1 cm, Math, L=0]{4}
   \Vertex[a=150 , d=1 cm, Math, L=1]{5}
 \draw [->] (-5,0)--(-3,0);
\tikzstyle{VertexStyle}=[draw=none]
   \Vertex[a=30 , d=1.7 cm, Math, L=v_1]{v1}
   \Vertex[a=330 , d=1.7 cm, Math, L=v_2]{v2}
   \Vertex[a=270 , d=1.7 cm, Math, L=v_3]{v3}
   \Vertex[a=210 , d=1.7 cm, Math, L=v_4]{v4}
   \Vertex[a=150 , d=1.7 cm, Math, L=v_5]{v5}
\end{tikzpicture}
\end{multicols}
\caption{The principal avalanche created by adding a grain of sand to
  $v_2$.}
\label{cycletopple6}
\end{figure}
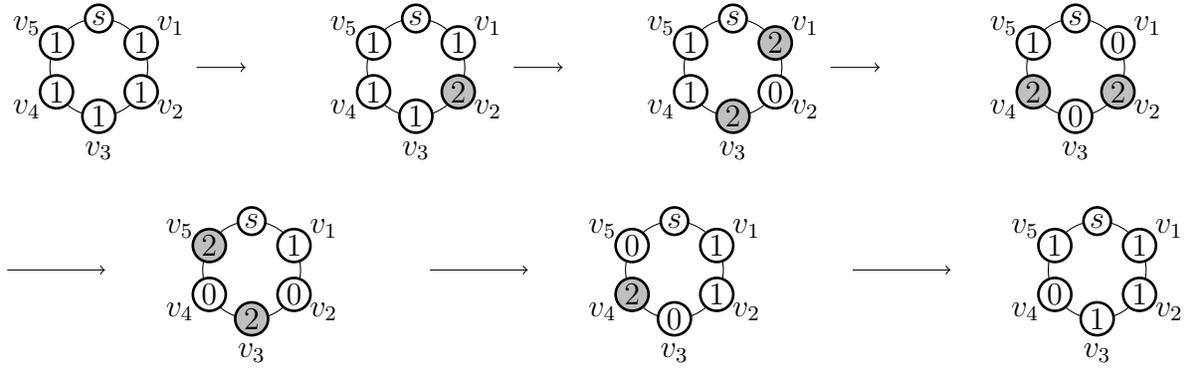

\begin{lemma}
\label{lem:SinkVertices} Let $n\geq 1$.  Then
$$\mu_{C_{n+1}}(1^n, v_1) = \mu_{C_{n+1}}(1^n,v_n) = x_1x_{2}\cdots x_n.$$
\end{lemma}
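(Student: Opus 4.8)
The plan is to exhibit one explicit stabilizing sequence of topplings for the sandpile $1^n+1_{v_1}$ and simply read off its toppling vector. Because $C_{n+1}$ is connected, its reduced Laplacian $\rL$ is invertible, so by Proposition~\ref{prop:laplacian} the toppling vector $\nu(1^n,v_1)$ is uniquely determined by the initial and final sandpiles and is in particular independent of the order in which unstable vertices are toppled; hence it suffices to produce a single valid sequence. (Alternatively, since $1^n=\smax_{C_{n+1}}$ is recurrent and $v_1$ is adjacent to the sink, Corollary~\ref{lem:ToppleOnce} already guarantees that no vertex topples more than once.)

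I would first dispose of the degenerate case $n=1$: in $C_2$ the vertex $v_1$ has degree $2$ and both of its edges go to $s$, so $1^1+1_{v_1}$ has two grains at $v_1$; toppling $v_1$ once sends both grains into the sink and leaves the (stable) empty sandpile. Thus $\nu(1^1,v_1)=(1)$ and $\mu_{C_2}(1^1,v_1)=x_1$. For $n\ge 2$ I would topple in the order $v_1,v_2,\dots,v_n$, and the main step is an induction on $k$ showing that after toppling $v_1,\dots,v_k$, for $1\le k\le n-1$, the current sandpile is $1^{k-1}\,0\,2\,1^{n-k-1}$ --- that is, $v_k=0$, $v_{k+1}=2$, and every other non-sink vertex carries exactly one grain --- whose only unstable vertex is $v_{k+1}$. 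The base case $k=1$ is immediate, and the inductive step is a one-line check using that $v_{k+1}$ has degree $2$: toppling it removes two grains from $v_{k+1}$, raises $v_k$ from $0$ to $1$, and raises $v_{k+2}$ from $1$ to $2$. Applying this with $k=n-1$ yields the sandpile $1^{n-2}\,0\,2$; toppling $v_n$ (which is adjacent to $s$) then removes two grains from $v_n$, raises $v_{n-1}$ from $0$ to $1$, and drops one grain into the sink, leaving the stable sandpile $1^{n-1}\,0$. Hence every non-sink vertex has toppled exactly once, $\nu(1^n,v_1)=(1,\dots,1)$, and so $\mu_{C_{n+1}}(1^n,v_1)=x_1x_2\cdots x_n$.

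For the statement about $v_n$, I would invoke the reflection automorphism $\sigma$ of $C_{n+1}$ that fixes $s$ and sends $v_i\mapsto v_{n+1-i}$. Since $\sigma$ is a graph automorphism fixing the sink, relabelling the vertices of $C_{n+1}$ by $\sigma$ is a symmetry of the entire sandpile dynamics; because $1^n$ is $\sigma$-invariant and $\sigma(v_n)=v_1$, it follows that $\mu_{C_{n+1}}(1^n,v_n)$ is obtained from $\mu_{C_{n+1}}(1^n,v_1)$ by the variable substitution $x_i\mapsto x_{n+1-i}$, namely $\mu_{C_{n+1}}(1^n,v_n)=x_n x_{n-1}\cdots x_1=x_1x_2\cdots x_n$. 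The only point needing care is the bookkeeping in the induction together with the degenerate $n=1$ case; I do not anticipate a genuine obstacle.
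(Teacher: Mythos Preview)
Your proof is correct and follows essentially the same approach as the paper: both exhibit the explicit toppling sequence $v_1,v_2,\dots,v_n$ and argue that each vertex topples exactly once, with the paper's argument being terser (it invokes Corollary~\ref{lem:ToppleOnce} up front and simply says ``a similar argument'' for $v_n$). Your explicit tracking of the intermediate sandpiles $1^{k-1}02\,1^{n-k-1}$ and your use of the reflection automorphism for $v_n$ are pleasant refinements but not a genuinely different route.
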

\begin{proof} Because $v_1$ and $v_n$ are adjacent to the sink, each
vertex topples at most once by Corollary \ref{lem:ToppleOnce}.  Now
consider $\smax + 1_{v_1}$.  
Toppling $v_1$ results in $v_2$ being
unstable. Inductively, for $i\geq 2$, if $v_i$ becomes unstable it
will topple and $v_{i+1}$ will become unstable.  
Thus each vertex topples exactly once. A similar argument works for
$\mu_{C_{n+1}}(1^n,v_n)$. 
\end{proof}


Observe that the reduced Laplacian of the cycle $C_{n+1}$ is the
$n\times n$ matrix
$$\widetilde{L} = \begin{bmatrix*}[r]
2 &-1 & & & &\\ -1 &2 &-1 & & & \\ & \ddots &\ddots &\ddots & & \\ &
&-1 &2 &-1 & \\ & & &-1 &2
\end{bmatrix*},$$ with 2's on the diagonal, $-1$'s on the off
diagonals, and 0's elsewhere.  
We will use the reduced Laplacian in the next two proofs. We will
abuse notation and write $\rL b$ instead of $\rL b^{t}$ to denote the
product of $\rL$ times the vector $b$.

\begin{lemma}
\label{lem:FirstStage} Let $n\geq 3$, $2\leq i \leq n-1$ and consider the
sandpile $1^n + 1_{v_i}$ on $C_{n+1}$.  We can legally topple each
vertex once and the resulting sandpile is $01^{i-2}21^{n-i-1}0$.
\end{lemma}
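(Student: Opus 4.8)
The plan is to exhibit one explicit legal order in which every non-sink vertex of $C_{n+1}$ topples exactly once, and then read off the resulting configuration algebraically from Proposition~\ref{prop:laplacian}. Writing the starting sandpile as the string $1^{i-1}21^{n-i}$, I would first topple the unique overloaded vertex $v_i$, which is legal since it holds $2=d_{v_i}$ grains, and which leaves both neighbours $v_{i-1}$ and $v_{i+1}$ holding $2$ grains. I then ``fire outward'' from $v_i$ in two sweeps: topple $v_{i-1},v_{i-2},\dots,v_1$ in that order, and then $v_{i+1},v_{i+2},\dots,v_n$.

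The heart of the argument is a short induction along each sweep showing that the vertex about to be fired does hold exactly $2$ grains at that moment. Going left, $v_{i-1}$ holds $2$ immediately after $v_i$ fires, and for $j$ decreasing from $i-2$ to $1$, firing $v_{j+1}$ raises $v_j$ from $1$ to $2$; the right sweep is symmetric. Along the way I would also track what each vertex is left holding: a vertex strictly between the two ends of a sweep is emptied when it fires and then receives exactly one grain back when the next vertex in that sweep fires, so it settles at $1$; the endpoints $v_1$ and $v_n$ are emptied and receive nothing further (their only non-sink neighbour has already fired and the sink never topples), so they settle at $0$; and $v_i$ is emptied by its own firing but then receives one grain from $v_{i-1}$ and one from $v_{i+1}$, settling at $2$. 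One should also observe that no already-toppled vertex becomes unstable again before the sweeps finish — each receives at most one grain back — so the prescribed order is genuinely legal.

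Since this produces a legal toppling sequence whose toppling vector is $\mathbf{1}=(1,\dots,1)$, Proposition~\ref{prop:laplacian} gives that the resulting sandpile is $(1^{i-1}21^{n-i})-\rL\mathbf{1}$. The first and last rows of the reduced Laplacian $\rL$ of $C_{n+1}$ each sum to $1$ while every interior row sums to $0$, so $\rL\mathbf{1}=(1,0,\dots,0,1)$; subtracting, and using $2\le i\le n-1$ so that positions $1$ and $n$ are distinct from position $i$, yields $01^{i-2}21^{n-i-1}0$, as claimed. The state-tracking of the previous paragraph is not strictly needed once legality is established, but it is convenient for the later lemmas that continue this avalanche.

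The only real obstacle is bookkeeping at the boundary: when $i=2$ the left sweep is just $v_2,v_1$, and when $i=n-1$ the right sweep is just $v_n$, and in these cases $v_i$ is adjacent to the endpoint being processed, so the ``emptied then refilled'' accounting for $v_i$ should be checked directly; likewise the hypothesis $n\ge 3$ is exactly what guarantees the two sweeps do not overlap. None of this is deep, but it is where a careless argument would slip.
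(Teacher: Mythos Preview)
Your proof is correct and follows essentially the same route as the paper: you exhibit the toppling order $v_i,v_{i-1},\dots,v_1,v_{i+1},\dots,v_n$, justify its legality, and then compute the result as $(1^n+1_{v_i})-\rL\,1^n = 01^{i-2}21^{n-i-1}0$ via Proposition~\ref{prop:laplacian}. The paper's version is terser (it merely asserts the sequence is legal), while you supply the inductive verification; one small slip is that for $i=2$ the left sweep is just $v_1$, not $v_2,v_1$.
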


\begin{proof} The sequence $v_i,v_{i-1},v_{i-2},\dots,
v_1,v_{i+1},v_{i+2},\dots,v_{n}$ is a legal toppling sequence.  
By Proposition
\ref{prop:laplacian}, $1^n +1_{v_i}$ accesses the sandpile
$$(1^n +1_{v_i})  - \widetilde{L}\cdot 1^n = (1^n + 1_{v_i}) -
10^{n-2}1 = 01^{i-2}21^{n-i-1}0.$$ 
\end{proof}

Lemma \ref{lem:FirstStage} states that 
$1^n +1_{v_i}\leadsto 01^{i-2}21^{n-i-1}0$, another
unstable sandpile. Lemma \ref{lemma:OtherStages} will describe the
remaining toppling pattern.

\begin{lemma} \label{lemma:OtherStages}
Let $n\geq 3$, $2\leq i\leq n-1$, and $1\leq k
\leq \min\{i-1,n-i\}$. Let $c_{k}$ be the sandpile 
$$c_k = 1^{k-1}01^{i-k-1}21^{n-i-k}01^{k-1}.$$
Then for each $1\leq k\leq \min\{i-2,n-i-1\}$ the sandpile
$c_{k}$ accesses the sandpile  $c_{k+1}$
via toppling vertices $v_{k+1},v_{k+2},\dots,v_{n-k}$.
\end{lemma}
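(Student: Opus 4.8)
The plan is to verify directly, via Proposition~\ref{prop:laplacian}, that applying the reduced Laplacian to the indicator vector of the toppling set $\{v_{k+1},\dots,v_{n-k}\}$ transforms $c_k$ into $c_{k+1}$, and then to check that the prescribed order of these topplings is legal. Write $f = \sum_{j=k+1}^{n-k} 1_{v_j}$, the indicator (column) vector of the set $S = \{v_{k+1},\dots,v_{n-k}\}$. Because $\rL$ is the path-Laplacian with $2$'s on the diagonal and $-1$'s off-diagonal, the product $\rL f$ is easy to compute coordinate-by-coordinate: at an interior vertex $v_j$ with $k+1 < j < n-k$ all three contributions cancel and $(\rL f)_j = 0$; at the two endpoints $v_{k+1}$ and $v_{n-k}$ one obtains $+1$; and at the two vertices $v_k$ and $v_{n-k+1}$ just outside $S$ one obtains $-1$. (All other coordinates are $0$.) Thus $\rL f = 1_{v_{k+1}} + 1_{v_{n-k}} - 1_{v_k} - 1_{v_{n-k+1}}$, where the final term is to be dropped when $n-k+1 > n$, i.e. when $k = 1$, consistent with the string convention that a bit raised to the $0$ power does not appear.

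The next step is the bookkeeping: subtracting $\rL f$ from $c_k = 1^{k-1}01^{i-k-1}21^{n-i-k}01^{k-1}$ should yield $c_{k+1} = 1^{k}01^{i-k-2}21^{n-i-k-1}01^{k}$. Concretely, the two $-1$'s at positions $k$ and $n-k+1$ turn the two $1$'s bracketing the pattern into $0$'s, lengthening the two runs of $0$'s inward by one; the $+1$ at position $k+1$ (which was a $0$ in $c_k$) makes it a $1$, and likewise the $+1$ at position $n-k$; and the central $2$ at position $i$ is untouched since $k+1 \le i \le n-k$ forces $v_i \in S$ with $(\rL f)_i = 0$ when $i$ is interior to $S$ — and the hypothesis $k \le \min\{i-2, n-i-1\}$ guarantees $k+1 \le i-1$ and $i+1 \le n-k$, so $v_i$ is strictly interior to $S$ and the $2$ survives. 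Matching the resulting string against $c_{k+1}$ is then a direct check that the $0$'s sit at positions $k+1$ and... — one must be careful here: in $c_{k+1}$ the leading run of $1$'s has length $k$ and the first $0$ is at position $k+1$, which is exactly the old position $k+1$ that received a $+1$; I would re-index carefully and note that the ``new'' zero positions of $c_{k+1}$ are $k+1$ and $n-k$ relative to the old indexing, which is precisely $\bigl(\text{position }k+1\bigr)$ and $\bigl(\text{position }n-k\bigr)$ — reconciling this against the $+1$'s rather than the $-1$'s is the one spot where it is easy to slip a sign, so I would write out the $n=6$, $i=3$, $k=1$ instance of Example~\ref{ex:cycle:1} as a sanity check.

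Finally I would justify that the toppling order $v_{k+1}, v_{k+2}, \dots, v_{n-k}$ is \emph{legal}, not merely that the net effect is correct: in $c_k$ the vertex $v_{k+1}$ has a $0$, but after toppling $v_k$'s neighbor... no — rather, one topples along the sequence and checks inductively that each vertex is unstable (has $\ge 2$ grains) at the moment it is toppled. The key point is that $c_k$ has the unstable vertex $v_i$ carrying a $2$; toppling proceeds outward from a legal starting point. Since by Lemma~\ref{lem:FirstStage} or by the structure of $c_k$ one of the endpoints adjacent to a $0$-run can be reached, I would argue that toppling $v_{k+1},\dots,v_{i-1}$ in order each first receives a grain from its just-toppled left neighbor (or is already loaded), reaches $2$, and fires; then $v_i$ with its $2$ fires; then $v_{i+1},\dots,v_{n-k}$ similarly cascade. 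The main obstacle is precisely this legality verification — the algebraic identity $c_{k+1} = c_k - \rL f$ is routine once the Laplacian action is computed, but one must exhibit an honest legal ordering, and the cleanest route is to split the sequence at $v_i$ and show each half is a chain of forced topplings, using that a vertex with value $1$ becomes unstable exactly when its predecessor in the chain fires, and that $v_i$ is already unstable at the start. I would phrase this as: each $v_j$ for $k+1 \le j \le i-1$ has value $1$ in $c_k$ and receives one grain when $v_{j-1}$ topples (for $j = k+1$, from... — here one needs $v_{k+1}$ to have value $1$, which it does since the leftmost $0$ of $c_k$ is at position $k$, not $k+1$), hence becomes unstable; symmetrically for the right half; and $v_i$ starts with $2$.
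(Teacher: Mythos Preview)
Your approach is the same as the paper's: compute $\rL f$ for the indicator $f = 0^k 1^{n-2k} 0^k$ of the set $\{v_{k+1},\dots,v_{n-k}\}$, subtract from $c_k$ to obtain $c_{k+1}$, and separately exhibit a legal toppling order. The computation of $\rL f$ is correct. However, the execution has two concrete errors.

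First, the bookkeeping paragraph has the $0$'s and $1$'s of $c_k$ swapped. You write that position $k+1$ ``was a $0$ in $c_k$'' and that the $-1$'s at positions $k$ and $n-k+1$ act on $1$'s. In fact $c_k$ has its $0$'s at positions $k$ and $n-k+1$, and $1$'s at $k+1$ and $n-k$; subtracting $\rL f$ turns the $0$'s into $1$'s (via the $-1$'s) and the adjacent $1$'s into $0$'s (via the $+1$'s). You seem to notice the mismatch in the next paragraph, but the account never settles.

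Second, and more important, the proposed order $v_{k+1}, v_{k+2}, \dots, v_{n-k}$ is \emph{not} legal: $v_{k+1}$ has value $1$ in $c_k$ and cannot fire first. You correctly observe that $v_i$ is the only unstable vertex and that one should ``split at $v_i$'', but then write that each $v_j$ with $k+1 \le j \le i-1$ receives a grain from $v_{j-1}$ --- the wrong neighbor, and for $j=k+1$ this would require $v_k$ to fire, which it never does. The cascade on the left half must run from $v_i$ \emph{leftward}, so $v_j$ receives its grain from $v_{j+1}$. The paper takes the order $v_i, v_{i-1}, \dots, v_{k+1}, v_{i+1}, \dots, v_{n-k}$: fire $v_i$ (value $2$), cascade left down to $v_{k+1}$ (each vertex has value $1$ and has just received one grain from its right neighbor), then cascade right from $v_{i+1}$ (which received a grain when $v_i$ fired at the start) out to $v_{n-k}$.
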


\begin{proof} The toppling sequence $v_i,v_{i-1},\dots, v_{k+1},
v_{i+1},\dots,v_{n-k}$ is a legal toppling sequence. Applying this
toppling sequence we obtain 
$\widetilde{L}\cdot 0^{k}1^{n-2k}0^{k} =
0^{k-1}(-1)10^{n-2k-2}1(-1)0^{k-1}$. Thus, Proposition
\ref{prop:laplacian} implies $c_{k}$ accesses the sandpile

\[
1^{k-1}01^{i-k-1}21^{n-i-k}01^{k-1} - 0^{k-1}(-1)10^{n-2k-2}1(-1)0^{k-1} 
= 1^k01^{i-k-2}21^{n-i-k-1}01^k = c_{k+1}.
\]
\end{proof}

\begin{thm}
\label{thm:CycleTopplingMonomialsMax} For $n\geq 1$, $1\leq i \leq n$,
let $m = \min\{i,n-i+1\}$.  Then
 $$\mu_{C_{n+1}} (1^n, v_i) = \prod_{j=1}^{m} x_j\cdots
 x_{n-j+1}=(x_1\cdots x_n)(x_2\cdots x_{n-1})\cdots(x_m\cdots
 x_{n-m+1}).$$  
\end{thm}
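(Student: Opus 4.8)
The plan is to produce, for each non-sink vertex $v_i$, one explicit legal sequence of topplings that stabilizes $1^n+1_{v_i}$, and then read the toppling vector $\nu(1^n,v_i)$ off of it. This is legitimate: the stabilization is unique (Proposition~\ref{prop:THEstabilization}), and since the reduced Laplacian $\widetilde L$ is invertible (its determinant counts spanning trees), Proposition~\ref{prop:laplacian} then forces the toppling vector to be the same for every legal stabilizing sequence. I would first dispose of the boundary cases: if $i=1$ or $i=n$ then $m=1$ and the claimed monomial is $x_1\cdots x_n$, which is precisely Lemma~\ref{lem:SinkVertices} (this also settles $n\in\{1,2\}$, where every non-sink vertex is adjacent to $s$). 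For the remaining cases I would invoke the reflection $v_j\mapsto v_{n+1-j}$: it is an automorphism of $C_{n+1}$ fixing $s$, it leaves $1^n$ fixed, and it leaves the target monomial fixed since each factor $x_l x_{l+1}\cdots x_{n-l+1}$ is palindromic; hence it suffices to treat $2\le i\le n-1$ with $i\le n-i+1$, so that $m=i$ and $n\ge 3$.

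In this case I would concatenate three blocks of topplings, writing $\mathbf 1_{[a,b]}$ for the vector that equals $1$ on $v_a,v_{a+1},\dots,v_b$ and $0$ elsewhere. \emph{Block one} is the length-$n$ toppling sequence of Lemma~\ref{lem:FirstStage}: it legally carries $1^n+1_{v_i}$ to $c_1=0\,1^{i-2}\,2\,1^{n-i-1}\,0$, toppling each vertex once, and contributes $\mathbf 1_{[1,n]}$. \emph{Block two} applies Lemma~\ref{lemma:OtherStages} in succession for $k=1,2,\dots,m-2$ (note $m-2=\min\{i-2,n-i-1\}$ precisely because $i\le n-i+1$), legally producing $c_1\leadsto c_2\leadsto\cdots\leadsto c_{m-1}$ and contributing $\mathbf 1_{[2,n-1]}+\mathbf 1_{[3,n-2]}+\cdots+\mathbf 1_{[m-1,\,n-m+2]}$. \emph{Block three}, the ``tail'' (the only genuinely new ingredient), starts from $c_{m-1}=c_{i-1}=1^{i-2}\,0\,2\,1^{n-2i+1}\,0\,1^{i-2}$ and topples $v_i,v_{i+1},\dots,v_{n-i+1}$ in this order. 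A short induction shows that just before $v_j$ topples the configuration carries a lone $2$ at $v_j$, a $0$ at $v_{j-1}$, a $0$ at $v_{n-i+2}$, and $1$'s everywhere else, so every topple in the tail is legal; and because $\widetilde L\cdot\mathbf 1_{[i,n-i+1]}=1_{v_i}+1_{v_{n-i+1}}-1_{v_{i-1}}-1_{v_{n-i+2}}$, Proposition~\ref{prop:laplacian} gives $c_{i-1}-\widetilde L\cdot\mathbf 1_{[i,n-i+1]}=1^n-1_{v_{n-i+1}}$, which is stable. So block three contributes $\mathbf 1_{[i,n-i+1]}=\mathbf 1_{[m,\,n-m+1]}$, and the concatenation of the three blocks is a legal stabilizing sequence.

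Summing the three contributions and re-indexing (taking $l=1$ in block one, $l=k+1$ in block two, and $l=m=i$ in block three) gives
\[
\nu(1^n,v_i)=\sum_{l=1}^{m}\mathbf 1_{[l,\,n-l+1]},\qquad\text{hence}\qquad \mu_{C_{n+1}}(1^n,v_i)=\prod_{l=1}^{m}\ \prod_{j=l}^{\,n-l+1}x_j=(x_1\cdots x_n)(x_2\cdots x_{n-1})\cdots(x_m\cdots x_{n-m+1}),
\]
which is the assertion. I expect the tail block to be the main obstacle: one must verify that the single propagating ``$2$'' neither stalls before reaching $v_{n-i+1}$ nor overshoots it, being absorbed exactly by the $0$ that sits at $v_{n-i+2}$, and also handle the degenerate middle case $n=2i-1$, in which $c_{i-1}=1^{i-2}\,0\,2\,0\,1^{i-2}$ and the tail is the single topple of $v_i$. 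Everything else — the identity $m-2=\min\{i-2,n-i-1\}$ and the fact that the three families of intervals assemble into exactly $\{\,[l,\,n-l+1]:1\le l\le m\,\}$ — is routine bookkeeping.
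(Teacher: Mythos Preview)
Your proof is correct and follows essentially the same three-block structure as the paper's proof: Lemma~\ref{lem:FirstStage} for the first factor, iterated applications of Lemma~\ref{lemma:OtherStages} for the middle factors, and an explicit tail toppling $v_i,\dots,v_{n-i+1}$ from $c_{i-1}$ for the final factor. The only cosmetic difference is that you invoke the reflection $v_j\mapsto v_{n+1-j}$ to reduce to the case $m=i$, whereas the paper simply remarks that the case $m=n-i+1$ is handled similarly.
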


\begin{proof}
The cases for $n=1$ and $n=2$ are discussed in the first paragraph of
this section. Now assume $n\geq 3$.
If $i = 1$ or $i=n$, the result follows from Lemma
\ref{lem:SinkVertices}.  Suppose $2 \leq i \leq n-1$, by Lemma
\ref{lem:FirstStage}  $\max+1_{v_i} \leadsto c_1$ via the toppling
of all vertices which gives the factor $x_1x_2\cdots
x_n$.  By Lemma \ref{lemma:OtherStages},
$c_1 \leadsto c_2 \leadsto \cdots \leadsto c_{m-1}.$
Note that $c_{k}\leadsto c_{k+1}$ via
the toppling of vertices $v_{k+1},\dots, v_{n-k}$. This produces the factor
$x_{k+1}\cdots x_{n-k}$. 
Suppose that $m=i=\min\{i,n-i+1\}$. Then
$c_{m-1} = c_{i-1} = 1^{i-2}021^{n-2i+1}01^{i-2}$.  Only vertex $v_i$
is unstable, but when $v_i$ topples $v_{i-1}$ does not become
unstable.  Vertex $v_{i+1}$ does become unstable and will topple.  In
fact, vertices $v_i, v_{i+1}, \dots, v_{n-i+1}$ will topple resulting 
in the stable sandpile $1^{n-i}01^{i-1}$.  This gives the
last factor $x_m\cdots x_{n-m+1}$.  So, if
$m= i = \min\{i,n-i+1\}$, 
$$\mu_{C_{n+1}}(1^n,v_i) = \prod_{j=1}^m x_j\dotsm x_{n-j+1}.$$
The proof is similar for the case $m = n-i+1 = \min\{i,n-i+1\}$.
\end{proof}

\subsection{The toppling sequence for recurrents $1^{p-1}01^{n-p}$}

Theorem \ref{thm:CycleTopplingMonomialsMax} gives us the avalanche
monomials for the maximal stable sandpile at all vertices. Now we find
the avalanche monomials for recurrents of the form $1^{p-1}01^{n-p}$.
We will see that these monomials are closely related to the
avalanche monomials arising from $1^n$.

\begin{ex} \label{ex:cycle:2} In Example \ref{ex:cycle:1}, we saw
$\mu_{C_6} (1^5,v_2) = x_1x_2^2x_3^2x_4^2x_5$. Figure \ref{1p01q}
shows that $\mu_{C_{10}}(1^301^5,v_6) = x_5x_6^2x_7^2x_8^2x_9$. Notice
that the structure of these monomials. 
The only difference is that there is a relabeling of the
variables $x_i \to x_{i+4}$. 
\end{ex}

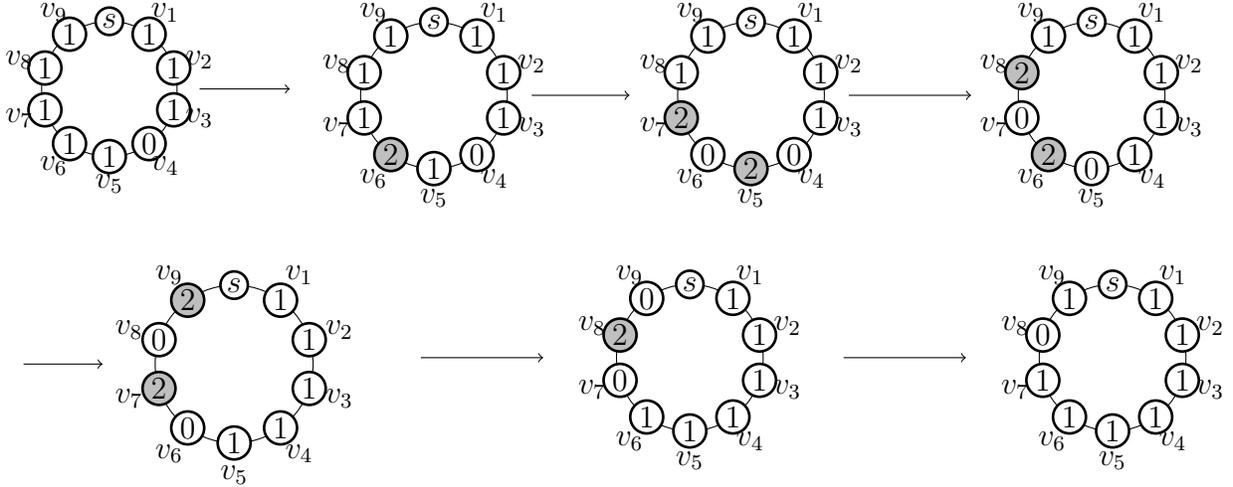
\begin{figure}[!h]
\centering
\begin{multicols}{4}

\begin{tikzpicture}[scale=0.6]
\tikzstyle{VertexStyle} = [shape = circle, draw = black, 
                           fill=white, inner sep = 1pt, outer sep = 0.5pt, minimum size = 2mm, 
                           line width = 1pt]
\SetUpEdge[lw=1.5pt] 
\SetGraphUnit{2} 
\tikzset{EdgeStyle/.style={-}}
 \draw (1.5,0) arc (0:360: 1.5 cm);
    \Vertex[a=90 , d=1.5 cm, Math, L=s]{0}
   \Vertex[a=54 , d=1.5 cm, Math, L=1]{1}
   \Vertex[a=18 , d=1.5 cm, Math, L=1]{2}
   \Vertex[a=-18 , d=1.5 cm, Math, L=1]{3}
   \Vertex[a=-54 , d=1.5 cm, Math, L=0]{4}
   \Vertex[a=270 , d=1.5 cm, Math, L=1]{5}
   \Vertex[a=234 , d=1.5 cm, Math, L=1]{6}
   \Vertex[a=198 , d=1.5 cm, Math, L=1]{7}
   \Vertex[a=-198 , d=1.5 cm, Math, L=1]{8}
   \Vertex[a=-234 , d=1.5 cm, Math, L=1]{9}
\tikzstyle{VertexStyle}=[draw=none]
   \Vertex[a=54 , d=2.1 cm, Math, L=v_1]{v1}
   \Vertex[a=18 , d=2.1 cm, Math, L=v_2]{v2}
   \Vertex[a=-18 , d=2.1 cm, Math, L=v_3]{v3}
   \Vertex[a=-54 , d=2.1 cm, Math, L=v_4]{v4}
   \Vertex[a=270 , d=2.1 cm, Math, L=v_5]{v5}
   \Vertex[a=234 , d=2.1 cm, Math, L=v_6]{v6}
   \Vertex[a=198 , d=2.1 cm, Math, L=v_7]{v7}
   \Vertex[a=-198 , d=2.1 cm, Math, L=v_8]{v8}
   \Vertex[a=-234 , d=2.1 cm, Math, L=v_9]{v9}
\draw[->] (2,0)--(4,0);
\end{tikzpicture}

\columnbreak

\begin{tikzpicture}[scale=0.65]
\tikzstyle{VertexStyle} = [shape = circle, draw = black, 
                           fill=white, inner sep = 1pt, outer sep = 0.5pt, minimum size = 2mm, 
                           line width = 1pt]
\SetUpEdge[lw=1.5pt] 
\SetGraphUnit{2} 
\tikzset{EdgeStyle/.style={-}}
 \draw (1.5,0) arc (0:360: 1.5 cm);
    \Vertex[a=90 , d=1.5 cm, Math, L=s]{0}
   \Vertex[a=54 , d=1.5 cm, Math, L=1]{1}
   \Vertex[a=18 , d=1.5 cm, Math, L=1]{2}
   \Vertex[a=-18 , d=1.5 cm, Math, L=1]{3}
   \Vertex[a=-54 , d=1.5 cm, Math, L=0]{4}
   \Vertex[a=270 , d=1.5 cm, Math, L=1]{5}

   \Vertex[a=198 , d=1.5 cm, Math, L=1]{7}
   \Vertex[a=-198 , d=1.5 cm, Math, L=1]{8}
   \Vertex[a=-234 , d=1.5 cm, Math, L=1]{9}
\tikzstyle{VertexStyle} = [shape = circle, draw = black, 
                           fill=gray!50!white, inner sep = 1pt, outer sep = 0.5pt, minimum size = 2mm, 
                           line width = 1pt]
   \Vertex[a=234 , d=1.5 cm, Math, L=2]{6}                           
\tikzstyle{VertexStyle}=[draw=none]
   \Vertex[a=54 , d=2.1 cm, Math, L=v_1]{v1}
   \Vertex[a=18 , d=2.1 cm, Math, L=v_2]{v2}
   \Vertex[a=-18 , d=2.1 cm, Math, L=v_3]{v3}
   \Vertex[a=-54 , d=2.1 cm, Math, L=v_4]{v4}
   \Vertex[a=270 , d=2.1 cm, Math, L=v_5]{v5}
   \Vertex[a=234 , d=2.1 cm, Math, L=v_6]{v6}
   \Vertex[a=198 , d=2.1 cm, Math, L=v_7]{v7}
   \Vertex[a=-198 , d=2.1 cm, Math, L=v_8]{v8}
   \Vertex[a=-234 , d=2.1 cm, Math, L=v_9]{v9}
\draw[->] (2,0)--(4,0);
\end{tikzpicture}

\columnbreak

\begin{tikzpicture}[scale=0.65]
\tikzstyle{VertexStyle} = [shape = circle, draw = black, 
                           fill=white, inner sep = 1pt, outer sep = 0.5pt, minimum size = 2mm, 
                           line width = 1pt]
\SetUpEdge[lw=1.5pt] 
\SetGraphUnit{2} 
\tikzset{EdgeStyle/.style={-}}
 \draw (1.5,0) arc (0:360: 1.5 cm);
    \Vertex[a=90 , d=1.5 cm, Math, L=s]{0}
   \Vertex[a=54 , d=1.5 cm, Math, L=1]{1}
   \Vertex[a=18 , d=1.5 cm, Math, L=1]{2}
   \Vertex[a=-18 , d=1.5 cm, Math, L=1]{3}
   \Vertex[a=-54 , d=1.5 cm, Math, L=0]{4}

   \Vertex[a=234 , d=1.5 cm, Math, L=0]{6}

   \Vertex[a=-198 , d=1.5 cm, Math, L=1]{8}
   \Vertex[a=-234 , d=1.5 cm, Math, L=1]{9}
\tikzstyle{VertexStyle} = [shape = circle, draw = black, 
                           fill=gray!50!white, inner sep = 1pt, outer sep = 0.5pt, minimum size = 2mm, 
                           line width = 1pt]
   \Vertex[a=198 , d=1.5 cm, Math, L=2]{7}
  \Vertex[a=270 , d=1.5 cm, Math, L=2]{5}
\tikzstyle{VertexStyle}=[draw=none]
   \Vertex[a=54 , d=2.1 cm, Math, L=v_1]{v1}
   \Vertex[a=18 , d=2.1 cm, Math, L=v_2]{v2}
   \Vertex[a=-18 , d=2.1 cm, Math, L=v_3]{v3}
   \Vertex[a=-54 , d=2.1 cm, Math, L=v_4]{v4}
   \Vertex[a=270 , d=2.1 cm, Math, L=v_5]{v5}
   \Vertex[a=234 , d=2.1 cm, Math, L=v_6]{v6}
   \Vertex[a=198 , d=2.1 cm, Math, L=v_7]{v7}
   \Vertex[a=-198 , d=2.1 cm, Math, L=v_8]{v8}
   \Vertex[a=-234 , d=2.1 cm, Math, L=v_9]{v9}
\draw[->] (2,0)--(4.5,0);
\end{tikzpicture}

\columnbreak

\begin{tikzpicture}[scale=0.65]
\tikzstyle{VertexStyle} = [shape = circle, draw = black, 
                           fill=white, inner sep = 1pt, outer sep = 0.5pt, minimum size = 2mm, 
                           line width = 1pt]
\SetUpEdge[lw=1.5pt] 
\SetGraphUnit{2.0} 
\tikzset{EdgeStyle/.style={-}}
 \draw (1.5,0) arc (0:360: 1.5 cm);
    \Vertex[a=90 , d=1.5 cm, Math, L=s]{0}
   \Vertex[a=54 , d=1.5 cm, Math, L=1]{1}
   \Vertex[a=18 , d=1.5 cm, Math, L=1]{2}
   \Vertex[a=-18 , d=1.5 cm, Math, L=1]{3}
   \Vertex[a=-54 , d=1.5 cm, Math, L=1]{4}
  \Vertex[a=270 , d=1.5 cm, Math, L=0]{5}
   \Vertex[a=198 , d=1.5 cm, Math, L=0]{7}
   \Vertex[a=-234 , d=1.5 cm, Math, L=1]{9}
\tikzstyle{VertexStyle} = [shape = circle, draw = black, 
                           fill=gray!50!white, inner sep = 1pt, outer sep = 0.5pt, minimum size = 2mm, 
                           line width = 1pt]
   \Vertex[a=-198 , d=1.5 cm, Math, L=2]{8}
   \Vertex[a=234 , d=1.5 cm, Math, L=2]{6}
\tikzstyle{VertexStyle}=[draw=none]
   \Vertex[a=54 , d=2.1 cm, Math, L=v_1]{v1}
   \Vertex[a=18 , d=2.1 cm, Math, L=v_2]{v2}
   \Vertex[a=-18 , d=2.1 cm, Math, L=v_3]{v3}
   \Vertex[a=-54 , d=2.1 cm, Math, L=v_4]{v4}
   \Vertex[a=270 , d=2.1 cm, Math, L=v_5]{v5}
   \Vertex[a=234 , d=2.1 cm, Math, L=v_6]{v6}
   \Vertex[a=198 , d=2.1 cm, Math, L=v_7]{v7}
   \Vertex[a=-198 , d=2.1 cm, Math, L=v_8]{v8}
   \Vertex[a=-234 , d=2.1 cm, Math, L=v_9]{v9}
\end{tikzpicture}
\end{multicols}

\begin{multicols}{3}
\begin{tikzpicture}[scale=0.7]
\draw [->] (-4,0)--(-2.5,0);
\tikzstyle{VertexStyle} = [shape = circle, draw = black, 
                           fill=white, inner sep = 1pt, outer sep = 0.5pt, minimum size = 2mm, 
                           line width = 1pt]
\SetUpEdge[lw=1.5pt] 
\SetGraphUnit{2} 
\tikzset{EdgeStyle/.style={-}}
 \draw (1.5,0) arc (0:360: 1.5 cm);
    \Vertex[a=90 , d=1.5 cm, Math, L=s]{0}
   \Vertex[a=54 , d=1.5 cm, Math, L=1]{1}
   \Vertex[a=18 , d=1.5 cm, Math, L=1]{2}
   \Vertex[a=-18 , d=1.5 cm, Math, L=1]{3}
   \Vertex[a=-54 , d=1.5 cm, Math, L=1]{4}
  \Vertex[a=270 , d=1.5 cm, Math, L=1]{5}
 
   \Vertex[a=234 , d=1.5 cm, Math, L=0]{6}
  \Vertex[a=-198 , d=1.5 cm, Math, L=0]{8}
  
\tikzstyle{VertexStyle} = [shape = circle, draw = black, 
                           fill=gray!50!white, inner sep = 1pt, outer sep= 0.5pt, minimum size = 2mm, 
                           line width = 1pt]
    \Vertex[a=198 , d=1.5 cm, Math, L=2]{7}
   \Vertex[a=-234 , d=1.5 cm, Math, L=2]{9}

\tikzstyle{VertexStyle}=[draw=none]
   \Vertex[a=54 , d=2.1 cm, Math, L=v_1]{v1}
   \Vertex[a=18 , d=2.1 cm, Math, L=v_2]{v2}
   \Vertex[a=-18 , d=2.1 cm, Math, L=v_3]{v3}
   \Vertex[a=-54 , d=2.1 cm, Math, L=v_4]{v4}
   \Vertex[a=270 , d=2.1 cm, Math, L=v_5]{v5}
   \Vertex[a=234 , d=2.1 cm, Math, L=v_6]{v6}
   \Vertex[a=198 , d=2.1 cm, Math, L=v_7]{v7}
   \Vertex[a=-198 , d=2.1 cm, Math, L=v_8]{v8}
   \Vertex[a=-234 , d=2.1 cm, Math, L=v_9]{v9}
\end{tikzpicture}

\columnbreak

\begin{tikzpicture}[scale=0.65]
\tikzstyle{VertexStyle} = [shape = circle, draw = black, 
                           fill=white, inner sep = 1pt, outer sep = 0.5pt, minimum size = 2mm, 
                           line width = 1pt]
\SetUpEdge[lw=1.5pt] 
\SetGraphUnit{2} 
\tikzset{EdgeStyle/.style={-}}
 \draw (1.5,0) arc (0:360: 1.5 cm);
    \Vertex[a=90 , d=1.5 cm, Math, L=s]{0}
   \Vertex[a=54 , d=1.5 cm, Math, L=1]{1}
   \Vertex[a=18 , d=1.5 cm, Math, L=1]{2}
   \Vertex[a=-18 , d=1.5 cm, Math, L=1]{3}
   \Vertex[a=-54 , d=1.5 cm, Math, L=1]{4}
  \Vertex[a=270 , d=1.5 cm, Math, L=1]{5}
     \Vertex[a=234 , d=1.5 cm, Math, L=1]{6}
      \Vertex[a=198 , d=1.5 cm, Math, L=0]{7}
   \Vertex[a=-234 , d=1.5 cm, Math, L=0]{9}
\tikzstyle{VertexStyle} = [shape = circle, draw = black, 
                           fill=gray!50!white, inner sep = 1pt, outer sep= 0.5pt, minimum size = 2mm, 
                           line width = 1pt]   

  \Vertex[a=-198 , d=1.5 cm, Math, L=2]{8}
\tikzstyle{VertexStyle}=[draw=none]
   \Vertex[a=54 , d=2.1 cm, Math, L=v_1]{v1}
   \Vertex[a=18 , d=2.1 cm, Math, L=v_2]{v2}
   \Vertex[a=-18 , d=2.1 cm, Math, L=v_3]{v3}
   \Vertex[a=-54 , d=2.1 cm, Math, L=v_4]{v4}
   \Vertex[a=270 , d=2.1 cm, Math, L=v_5]{v5}
   \Vertex[a=234 , d=2.1 cm, Math, L=v_6]{v6}
   \Vertex[a=198 , d=2.1 cm, Math, L=v_7]{v7}
   \Vertex[a=-198 , d=2.1 cm, Math, L=v_8]{v8}
   \Vertex[a=-234 , d=2.1 cm, Math, L=v_9]{v9}
 \draw [->] (-5.5,0)--(-3,0);
\end{tikzpicture}

\columnbreak

\begin{tikzpicture}[scale=0.65]
\tikzstyle{VertexStyle} = [shape = circle, draw = black, 
                           fill=white, inner sep = 1pt, outer sep = 0.5pt, minimum size = 2mm, 
                           line width = 1pt]
\SetUpEdge[lw=1.5pt] 
\SetGraphUnit{2} 
\tikzset{EdgeStyle/.style={-}}
 \draw (1.5,0) arc (0:360: 1.5 cm);
    \Vertex[a=90 , d=1.5 cm, Math, L=s]{0}
   \Vertex[a=54 , d=1.5 cm, Math, L=1]{1}
   \Vertex[a=18 , d=1.5 cm, Math, L=1]{2}
   \Vertex[a=-18 , d=1.5 cm, Math, L=1]{3}
   \Vertex[a=-54 , d=1.5 cm, Math, L=1]{4}
  \Vertex[a=270 , d=1.5 cm, Math, L=1]{5}
   \Vertex[a=198 , d=1.5 cm, Math, L=1]{7}
   \Vertex[a=234 , d=1.5 cm, Math, L=1]{6}
  \Vertex[a=-198 , d=1.5 cm, Math, L=0]{8}
   \Vertex[a=-234 , d=1.5 cm, Math, L=1]{9}

\tikzstyle{VertexStyle}=[draw=none]
   \Vertex[a=54 , d=2.1 cm, Math, L=v_1]{v1}
   \Vertex[a=18 , d=2.1 cm, Math, L=v_2]{v2}
   \Vertex[a=-18 , d=2.1 cm, Math, L=v_3]{v3}
   \Vertex[a=-54 , d=2.1 cm, Math, L=v_4]{v4}
   \Vertex[a=270 , d=2.1 cm, Math, L=v_5]{v5}
   \Vertex[a=234 , d=2.1 cm, Math, L=v_6]{v6}
   \Vertex[a=198 , d=2.1 cm, Math, L=v_7]{v7}
   \Vertex[a=-198 , d=2.1 cm, Math, L=v_8]{v8}
   \Vertex[a=-234 , d=2.1 cm, Math, L=v_9]{v9}
 \draw [->] (-5.5,0)--(-3,0);

\end{tikzpicture}
\end{multicols}
\caption{The principal avalanche of $1^301^5 + 1_{v_{6}}$.} 
\label{1p01q}
\end{figure}

Based on what we've seen in Example \ref{ex:cycle:2}, we may guess
that the toppling monomials associated to $1^{p-1}01^{n-p}$ are
related to the toppling monomials of $C_{p}$ and $C_{n-p+1}$.  This
turns out to be true and will now formally state and prove this
idea. First, we must introduce some useful notation.

\begin{defn}
\label{def:ReindexNotation}
Let $q$ be an integer. Then  $C_{n+1}^{q}$ will denote the cycle graph
on $n+1$ vertices labeled 
$v_{q+1}, \dots, v_{q+n}, s$.
\end{defn}


\begin{thm}
\label{thm:CycleNonMaxRecurrents} 
Let $b_p =1^{p-1}01^{n-p}$ be a
recurrent on $C_{n+1}$ such that $1 \leq p \leq n$.

\begin{enumerate}
\item[(1)] 
If $1 \leq i \leq p-1$, then 
$\mu_{C_{n+1}} (b_p,v_{i}) = \mu_{C_{p}}(1^{p-1}, v_i)$,

\item[(2)] 
If $p+1 \leq i \leq n$, then
$\mu_{C_{n+1}} (b_p,v_{i}) = \mu_{C_{n-p+1}^{p}} (1^{n-p},v_{i})$,

\item[(3)]
$\mu_{C_{n+1}}(b_p,v_p) = 1$.
\end{enumerate}
\end{thm}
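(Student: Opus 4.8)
The plan is to reduce each part to a stabilization in a smaller cycle and then invoke the abelian property, Proposition~\ref{prop:THEstabilization}: if I can exhibit \emph{one} legal toppling sequence of $b_p+1_{v_i}$ that ends at a stable sandpile, then its toppling vector is \emph{the} toppling vector, so the avalanche monomial is pinned down. Part~(3) is immediate: $b_p+1_{v_p}=1^n=\smax_{C_{n+1}}$ is already stable, so no vertex topples and $\mu_{C_{n+1}}(b_p,v_p)=\xx^{0}=1$.

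For part~(1), fix $1\le i\le p-1$, set $B=\{v_1,\dots,v_{p-1}\}$, and pick any legal toppling sequence $\sigma$ realizing the stabilization of $1^{p-1}+1_{v_i}$ inside $C_p$; every vertex occurring in $\sigma$ lies in $B$. The key observation is that toppling a vertex $v_j\in B$ acts on the tuple $(c(v_1),\dots,c(v_{p-1}))$ in a way independent of the ambient graph: it subtracts $2$ from $c(v_j)$ and adds $1$ to $c(v_{j'})$ for each neighbour $v_{j'}$ of $v_j$ lying in $B$, any remaining grains simply leaving $B$ (into $s$ in $C_p$, into $s$ or $v_p$ in $C_{n+1}$). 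Since $b_p+1_{v_i}$ and $1^{p-1}+1_{v_i}$ agree on $B$, an induction on the length of $\sigma$ shows the $B$-coordinates evolve identically in the two graphs; hence $\sigma$ is legal for $b_p+1_{v_i}$ in $C_{n+1}$ and leaves $B$ holding exactly the (stable) values of the $C_p$-stabilization. To conclude that the resulting sandpile is globally stable I check the vertices outside $B$: the vertices $v_{p+1},\dots,v_n$ are never touched by $\sigma$ and still hold one grain, while $v_p$ receives one grain for each toppling of $v_{p-1}$ and nothing from $v_{p+1}$. By Theorem~\ref{thm:CycleTopplingMonomialsMax} the exponent of $x_{p-1}$ in $\mu_{C_p}(1^{p-1},v_i)$ is $1$, so $v_{p-1}$ topples exactly once and $v_p$ ends with $0+1=1<2$ grains. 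Thus $\sigma$ stabilizes $b_p+1_{v_i}$ in $C_{n+1}$; its toppling vector is supported on $B$ and equal there to that of $C_p$, so $\mu_{C_{n+1}}(b_p,v_i)=\mu_{C_p}(1^{p-1},v_i)$.

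Part~(2) is the mirror image, with $B$ replaced by $\{v_{p+1},\dots,v_n\}$ and $C_p$ replaced by $C_{n-p+1}^{p}$, whose non-sink vertices are exactly $v_{p+1},\dots,v_n$ (Definition~\ref{def:ReindexNotation}): the same transport-and-stabilize argument applies, the vertices $v_1,\dots,v_{p-1}$ stay untouched and stable, and the barrier vertex $v_p$ absorbs a single grain from the unique toppling of $v_{p+1}$, ending at $1<2$ grains. Proposition~\ref{prop:THEstabilization} again concludes the proof.

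I expect the one genuinely delicate point to be the claim that the barrier vertex $v_p$ never becomes unstable --- which is exactly why it matters that $v_p$ is \emph{empty} in $b_p$. Everything hinges on the block-adjacent neighbour of $v_p$ (namely $v_{p-1}$ in part~(1) and $v_{p+1}$ in part~(2)) being a sink-adjacent vertex of the relevant smaller cycle and hence toppling exactly once; I would extract this from the explicit product formula of Theorem~\ref{thm:CycleTopplingMonomialsMax}, observing that the outermost variables occur there to the first power.
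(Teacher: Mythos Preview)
Your argument is correct and rests on the same underlying idea as the paper's proof---the avalanche stays confined to one arc of the cycle and coincides there with the avalanche in the smaller cycle---but your execution is more uniform. The paper splits part~(1) into cases ($p=2$; $p\ge 3$ with $i=1$; $i=p-1$; $2\le i\le p-2$), exhibits an explicit initial legal toppling sequence in each, and in the generic case identifies the resulting sandpile with the $c_1$ of Lemma~\ref{lemma:OtherStages} (with a string $1^{n-p+1}$ appended) so that the subsequent pattern is inherited from that lemma. You instead transport an \emph{arbitrary} legal stabilizing sequence from $C_p$ wholesale, and verify that the barrier vertex $v_p$ remains stable by reading off from the product formula of Theorem~\ref{thm:CycleTopplingMonomialsMax} that the sink-adjacent vertex $v_{p-1}$ topples exactly once. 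This sidesteps the case analysis entirely, at the price of invoking the closed-form Theorem~\ref{thm:CycleTopplingMonomialsMax} rather than the intermediate Lemmas~\ref{lem:FirstStage}--\ref{lemma:OtherStages}; either dependency is legitimate since all of these results precede the theorem in the paper's logical order.
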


\begin{proof} 
%
Let $1 \leq i \leq p-1$, this implies $p\geq 2$. If $p=2$, then
$i=1$, and $b_p + 1_{v_1} = 201^{n-2} \leadsto 0^21^{n-2}$. So 
$\mu_{C_{n+1}}(b_p,v_{1}) = x_{1} = \mu_{C_{2}}(1, v_{1})$ and the
result holds.

Assume $p\geq 3$. If $i = 1$, then $v_{1},\dots, v_{p-1}$ is a legal
toppling sequence and $b_{p} + 1_{v_{1}} \leadsto
1^{p-2}01^{n-p+1}$. Lemma \ref{lem:SinkVertices} implies
$\mu_{C_{n+1}}(b_{p},v_{1}) = x_{1}x_{2}\dotsm x_{p-1} =
\mu_{C_{p}}(1^{p-1},v_{1})$. Similarly, if $i = p-1$, then
$v_{p-1},v_{p-2},\dots, v_{1}$ is a legal toppling sequence and 
$b_{p} + 1_{v_{p-1}} \leadsto 01^{n-1}$. So again 
$\mu_{C_{n+1}}(b_{p},v_{p-1}) =
x_{1}x_{2}\dotsm x_{p-1} =  \mu_{C_{p}}(1^{p-1},v_{p-1})$.

If  $p\geq 3$ and $2 \leq i \leq p-2$, we have
$b_p + 1_{v_i} = 1^{p-1}01^{n-p} + 1_{v_{i}} =
1^{i-1}21^{p-i-1}01^{n-p}$. Note that $v_i, v_{i-1},\dots, v_1, v_{i+1},
\dots, v_{p-1}$ is a legal toppling sequence. So  $b_p + 1_{v_i} \leadsto
01^{i-2}21^{p-i-2}01^{n-p+1}$. From this computation we can deduce two
things. First, during the stabilization of $b_{p} + 1_{v_{i}}$, none
of the vertices $v_{p},\dots, v_{n}$ will topple. Second, the
sandpile $01^{i-2}21^{p-i-2}01^{n-p+1}$ is in fact the sandpile
$c_{1}$ defined in Lemma \ref{lemma:OtherStages} when $n+1 = p$ with
the string $1^{n-p+1}$ concatenated at the end. Therefore, from these
two observations we conclude that the
principal avalanche resulting from  $b_{p} + 1_{v_{i}}$ in $C_{n+1}$
follows the  pattern described in Lemma \ref{lemma:OtherStages}
for $C_{p}$. Hence $\mu_{C_{n+1}} (b_p,v_{i}) = \mu_{C_{p}}(1^{p-1}, v_i)$.

A similar argument works for the case $p+1 \leq i \leq n$ with the
exception that now the vertices that topple are $v_{p+1},\dots,
v_{n}$. So $\mu_{C_{n+1}} (b_p,v_{i}) = \mu_{C_{n-p+1}^{p}}
(1^{n-p},v_{i})$.
Finally, it is clear that $b_{p} + 1_{v_{p}}$ is stable. So 
$\mu_{c_{n+1}}(1^{p-1}01^{n-p},v_{p}) = 1$.
\end{proof}

The following result follows immediately from Theorem
\ref{thm:CycleTopplingMonomialsMax} and Theorem
\ref{thm:CycleNonMaxRecurrents}.  

\begin{cor}
\label{thm:Cycles}
The avalanche polynomial for $C_{n+1}$ for $n\geq 1$ is 
$$\sum_{i=1}^n \mu_{C_{n+1}}(1^{n}, v_i) 
+ \sum_{p=2}^{n} \sum_{i=1}^{p-1} \mu_{C_{p}}(1^{p-1}, v_i)
+ \sum_{p=1}^{n-1} \sum_{i=p+1}^{n} \mu_{C_{n-p+1}^{p}}(1^{n-p}, v_{i}) 
+ n,$$
where $\displaystyle \mu_{C_{q+1}}(1^{q}, v_i) = \prod_{j=1}^m
x_j\dotsm x_{q-j+1}$, $1\leq i\leq q$, and $m = \min\{i,q-i+1\}$.
\end{cor}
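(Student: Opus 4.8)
The plan is to assemble the avalanche polynomial of $C_{n+1}$ by summing the avalanche monomials $\mu_{C_{n+1}}(c,v)$ over all recurrent sandpiles $c$ and all non-sink vertices $v$, exactly as in the definition of $\sA_{C_{n+1}}$. Since the recurrents of $C_{n+1}$ are precisely $1^n$ together with $b_p = 1^{p-1}01^{n-p}$ for $p = 1,\dots,n$, I would split the double sum $\sum_{c}\sum_{v}$ according to which recurrent $c$ is, and then for each $b_p$ split the inner sum over $v$ according to whether the index $i$ of $v_i$ satisfies $1\le i\le p-1$, $i = p$, or $p+1\le i\le n$.

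First I would record that the contribution of the maximal stable sandpile is $\sum_{i=1}^n \mu_{C_{n+1}}(1^n,v_i)$, and invoke Theorem \ref{thm:CycleTopplingMonomialsMax} to rewrite each such monomial as $\prod_{j=1}^m x_j\cdots x_{n-j+1}$ with $m=\min\{i,n-i+1\}$; this gives the first sum in the statement and justifies the closed-form description at the end. Next, for each recurrent $b_p$ with $1\le p\le n$, Theorem \ref{thm:CycleNonMaxRecurrents}(3) says $\mu_{C_{n+1}}(b_p,v_p)=1$, so the diagonal terms $v=v_p$ contribute exactly $1$ for each of the $n$ values of $p$, producing the $+n$ in the formula. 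For the off-diagonal terms, part (1) of that theorem replaces $\mu_{C_{n+1}}(b_p,v_i)$ by $\mu_{C_p}(1^{p-1},v_i)$ when $1\le i\le p-1$ (note this range is nonempty only for $p\ge 2$), and part (2) replaces it by $\mu_{C_{n-p+1}^p}(1^{n-p},v_i)$ when $p+1\le i\le n$ (nonempty only for $p\le n-1$). Summing these over the appropriate ranges of $p$ and $i$ yields the second and third sums in the statement. Finally, I would apply Theorem \ref{thm:CycleTopplingMonomialsMax} once more (to $C_p$ and to the relabeled cycle $C_{n-p+1}^p$) to obtain the explicit product formula $\mu_{C_{q+1}}(1^q,v_i)=\prod_{j=1}^m x_j\cdots x_{q-j+1}$ quoted at the end.

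This is essentially a bookkeeping argument: the only real content is already packaged in Theorems \ref{thm:CycleTopplingMonomialsMax} and \ref{thm:CycleNonMaxRecurrents}, which is why the corollary is stated as following ``immediately.'' The one point that deserves care — and what I expect to be the main obstacle, such as it is — is getting the index ranges right: checking that $C_2$ (treated separately in the text, with $\sA_{C_2}(x_1)=x_1+1$) is consistent with the general formula, and verifying that the degenerate cases $p=1$ and $p=n$ contribute only their diagonal term $1$ and nothing to the first or third sums. I would close by sanity-checking against Example \ref{ex:CycleToppling}: for $n=2$ the formula gives $\mu_{C_3}(1^2,v_1)+\mu_{C_3}(1^2,v_2) + \mu_{C_2}(1,v_1) + \mu_{C_2}(1,v_2) + 2 = x_1x_2 + x_1x_2 + x_1 + x_2 + 2 = 2x_1x_2+x_1+x_2+2$, matching the stated $\sA_{C_3}(x_1,x_2)$.
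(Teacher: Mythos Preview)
Your proposal is correct and follows essentially the same approach as the paper, which simply notes that the corollary follows immediately from Theorems~\ref{thm:CycleTopplingMonomialsMax} and~\ref{thm:CycleNonMaxRecurrents}. One tiny notational slip in your sanity check: the third-sum term for $n=2$ should be written $\mu_{C_2^1}(1,v_2)=x_2$ rather than $\mu_{C_2}(1,v_2)$, though your computed value is right.
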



\section{Avalanche Polynomials of Complete Graphs}\label{complete}

In this section we will compute the avalanche polynomial of the
complete graph $K_{n+1}$ on $n+1$ vertices $v_1,v_{2},\dots,
v_{n},s$.  In Example \label{ex:CycleToppling}  we computed
$\sA_{K_{3}}(x_{1},x_{2})$. Using SageMath \cite{sage}, we can compute
the avalanche polynomial of $K_{4}$:


$$\sA_{K_4}(x_1,x_2,x_3)=9x_1x_2x_3+2x_1x_2+2x_1x_3+2x_2x_3+3x_1+3x_2+3x_3+24.$$ 

Note that $\sA_{K_{4}}(x) = 9x^3+6x^2+9x+24$. So the set of principal
avalanches of size $m$ is evenly partitioned into $\binom{3}{m}$
subsets for $0\leq m \leq 3$. Moreover, $\sA_{K_4}(x_1,x_2,x_3)$ is a
linear combination of elementary symmetric polynomials.
We will show that this characterizes the avalanche polynomial of $K_{n+1}$.


\begin{defn}
Let $m$ be an integer such that $0 \leq m \leq n$.
The \emph{elementary symmetric polynomial} of degree $m$ on variables
$x_1,x_2,\dots,x_n$ is 
$$e_{m}(x_{1},\dots, x_{n}) = \sum_{\substack{A \subseteq [n] \\ |A| =
    m}} \prod_{i \in A} x_{i}.$$ 
\end{defn}
 
Observe that $e_{0}(x_{1},\dots,x_{n}) = 1$ and the number of terms in
$e_{m}(x_{1},\dots, x_{n})$ is $\binom{n}{m}$.

As mentioned in Section \ref{background},  
the number of recurrent sandpiles of a graph $G$ equals the number of
spanning trees of $G$.  Cayley's formula implies that $K_{n}$ has 
$n^{n-2}$ recurrent sandpiles. In order to study the principal
avalanches in this graph, we will use a beautiful result first proved
in \cite{CR00} that establishes a bijection between recurrent
sandpiles in $K_{n+1}$ and $n$-parking functions. 


\begin{defn}
Given a function $p:\{0,1,\dots,n-1\} \to \{0,1,\dots,n-1\}$, let
$a_0\leq a_1\leq\cdots\leq a_{n-1}$
be the non-decreasing rearrangement of $p(0),\dots, p(n-1)$.  We say
that $p$ is an \emph{$n$-parking function} provided that $a_i\leq i$
for $0 \leq i \leq n-1$.   
\end{defn}

Note that the parking function $p$ can be represented by the vector 
$(p(0),p(1),\dots,p(n-1))$.

\begin{prop}[{\cite[Proposition 2.8]{CR00}}]
\label{prop:park}
The sandpile $c$ is recurrent on $K_{n+1}$ if and only if
$\smax_{K_{n+1}} - c $ is an $n$-parking function. 
\end{prop}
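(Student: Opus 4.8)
The plan is to read off the parking-function description directly from Dhar's burning criterion (Proposition~\ref{prop:Burn}). We may restrict attention to stable $c$: an unstable sandpile is never recurrent, and since every non-sink vertex of $K_{n+1}$ has degree $n$, an unstable $c$ makes $\smax_{K_{n+1}}-c = (n-1-c(v_1),\dots,n-1-c(v_n))$ have a negative entry, so it is not a parking function either. Thus from now on $0\le c(v_i)\le n-1$ for all $i$, and $p:=\smax_{K_{n+1}}-c$ is a genuine function $\{0,\dots,n-1\}\to\{0,\dots,n-1\}$ with $p(i-1)=n-1-c(v_i)$.

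First I would unwind the burning criterion in $K_{n+1}$. The sandpile obtained by firing the sink is $u=(1,\dots,1)$, because each $v_j$ is joined to $s$ by exactly one edge. Since the reduced Laplacian satisfies $\rL(1,\dots,1)^{t}=(1,\dots,1)^{t}$, toppling every non-sink vertex exactly once from $u+c$ returns $c$ (Proposition~\ref{prop:laplacian}), which is stable; hence Proposition~\ref{prop:Burn} says $c$ is recurrent if and only if there is a \emph{legal} order in which to perform these $n$ topplings. Now if a vertex $v$ is the $k$-th to topple in such an order, it has by then received exactly $k$ grains --- one from the sink and one from each of the $k-1$ previously toppled vertices, all of which are neighbours of $v$ --- so the step is legal precisely when $c(v)+k\ge n$. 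Therefore $c$ is recurrent if and only if the values $c(v_1),\dots,c(v_n)$ can be ordered as $c(v_{\pi(1)}),\dots,c(v_{\pi(n)})$ with $c(v_{\pi(k)})\ge n-k$ for $1\le k\le n$.

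Next comes a short greedy step: such an ordering exists if and only if, listing the multiset $\{c(v_1),\dots,c(v_n)\}$ in decreasing order as $b_1\ge b_2\ge\cdots\ge b_n$, one has $b_k\ge n-k$ for all $k$. Indeed, if $\pi$ works then each of $c(v_{\pi(1)}),\dots,c(v_{\pi(k)})$ is at least $n-k$, so at least $k$ of the $c(v_i)$ are $\ge n-k$ and hence $b_k\ge n-k$; conversely the decreasing order is itself a legal order. Finally I would match this against the parking condition: the non-decreasing rearrangement $a_0\le a_1\le\cdots\le a_{n-1}$ of $p$ satisfies $a_\ell=(n-1)-b_{\ell+1}$, so the inequalities $a_\ell\le\ell$ for $0\le\ell\le n-1$ are exactly the inequalities $b_{\ell+1}\ge n-1-\ell$, i.e.\ $b_k\ge n-k$ for $1\le k\le n$. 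Hence $c$ is recurrent if and only if $p=\smax_{K_{n+1}}-c$ is an $n$-parking function.

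I expect the only real point of care to be the greedy/exchange step together with keeping the $0$-indexed parking domain aligned with the $1$-indexed non-sink vertices; everything else is bookkeeping. One could also quote Corollary~\ref{lem:ToppleOnce} to see a priori that no vertex topples twice in a principal avalanche on $K_{n+1}$, but the burning criterion already furnishes the ``each vertex exactly once'' statement that this argument needs.
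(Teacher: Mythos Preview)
Your argument is correct. The reduction to Dhar's burning criterion is exactly right: in $K_{n+1}$ the sink fires $u=(1,\dots,1)$, the Laplacian identity $\rL\,\mathbf 1=\mathbf 1$ shows that toppling every vertex once returns $c$, and the legality condition $c(v_{\pi(k)})\ge n-k$ together with your greedy/sorting step is precisely the parking inequality $a_\ell\le\ell$ for the non-decreasing rearrangement of $\smax-c$. The index bookkeeping is fine.

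Note, however, that the paper does not supply its own proof of this proposition: it is quoted verbatim from \cite[Proposition~2.8]{CR00} and used as a black box. So there is nothing in the present paper to compare your argument to. What you have written is essentially the standard burning-algorithm proof that Cori and Rossin give, specialized to $K_{n+1}$; your version is self-contained from the tools already stated in Section~\ref{background} (Propositions~\ref{prop:laplacian} and~\ref{prop:Burn}), which is a nice bonus.
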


 It is clear from the definition that any permutation of a parking
 function is also a parking function.  We can concatenate parking
 functions to obtain new parking functions.

\begin{lemma}
\label{lemma:PFConcatenate}
Let $p=(p_0,p_1,\dots,p_{m-1})$ and $q=(q_0,q_1,\dots,q_{n-1})$ be two
parking functions.  Then
$(p_0,p_1,\dots,p_{m-1},q_0+m,q_1+m,\dots,q_{n-1}+m)$ is also a
parking function. 
\end{lemma}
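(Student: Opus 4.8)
The plan is to verify the defining inequality of a parking function directly for the concatenation. Write $r = (p_0,\dots,p_{m-1},q_0+m,\dots,q_{n-1}+m)$, a vector of length $m+n$ with entries indexed by $\{0,1,\dots,m+n-1\}$. Let $a_0 \le a_1 \le \dots \le a_{m-1}$ be the non-decreasing rearrangement of $p_0,\dots,p_{m-1}$, and let $b_0 \le b_1 \le \dots \le b_{n-1}$ be the non-decreasing rearrangement of $q_0,\dots,q_{n-1}$; the hypothesis says $a_i \le i$ for $0 \le i \le m-1$ and $b_j \le j$ for $0 \le j \le n-1$. The key observation is that since every $p_k$ satisfies $p_k \le m-1$ (because $a_{m-1} \le m-1$) and every shifted entry satisfies $q_j + m \ge m$, the non-decreasing rearrangement of $r$ is exactly the concatenation $a_0 \le \dots \le a_{m-1} \le b_0 + m \le \dots \le b_{n-1} + m$. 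So I would first establish that the sorted order of $r$ has this explicit block form.

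Next I would check the parking condition index by index. Call the sorted entries of $r$ by $c_0 \le c_1 \le \dots \le c_{m+n-1}$. For $0 \le i \le m-1$ we have $c_i = a_i \le i$ by hypothesis, so the condition holds there. For $m \le i \le m+n-1$, writing $i = m+j$ with $0 \le j \le n-1$, we have $c_i = b_j + m \le j + m = i$ by the hypothesis on $q$. Hence $c_i \le i$ for all $i$, which is exactly the statement that $r$ is an $(m+n)$-parking function.

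The only point requiring a little care — and the main (mild) obstacle — is justifying that the sorted order of $r$ really is the concatenation of the two sorted blocks, i.e., that no shifted $q$-entry can be smaller than any $p$-entry. This follows because $\max_k p_k = a_{m-1} \le m-1 < m \le m + \min_j q_j = m + b_0 = \min_j (q_j + m)$, so the largest original $p$-entry is strictly less than the smallest shifted $q$-entry; sorting therefore keeps the two groups separate in the claimed order. Once this is noted, the rest is the routine index comparison above, so the lemma follows.
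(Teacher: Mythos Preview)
Your proof is correct and follows essentially the same route as the paper: sort each sequence, observe that $a_{m-1}\le m-1 < m \le b_0+m$ so the sorted concatenation is the concatenation of the sorted blocks, and then verify $c_i\le i$ on each block. The paper's write-up is slightly terser (it notes $b_0=0$, since $b_0\le 0$ and $b_0\ge 0$), but the argument is the same.
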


\begin{proof}
Let $a_0 \leq a_1 \leq \dotsm \leq a_{m-1}$ and $b_0 \leq b_1 \leq
\dotsm \leq b_{n-1}$ be non-decreasing rearrangements of $p$ and $q$,
respectively. Note 
$b_0+m \leq b_1+m \leq \dotsm \leq b_{n-1}+m$ and $b_i + m \leq i +
m$ for each $i=0,\dots, n-1$, since $q$ is a parking function. 
Moreover, $a_{m-1}\leq m-1 < m = b_0 + m$.  So 
$$a_0 \leq a_{1} \leq \cdots \leq a_{m-1} < b_0 + m\leq b_1 + m \leq
\cdots \leq b_{n-1} + m$$ 
and each term is less than or equal to its index.  
\end{proof}

\subsection{The Avalanche Polynomial of $K_{n+1}$}

The following lemma gives a partial description of a sandpile $c$
given the size of a principal avalanche. 

\begin{lemma}
\label{lem:WhatTopples}
Let $c$ be a sandpile on $K_{n+1}$. 
Suppose that the principal avalanche resulting from stabilizing
$c+1_{v_{k}}$ has length $m\geq 1$. Let $w_0,w_1,\dots,w_{m-1}$ be the
associated toppling sequence and let $\{u_0,\dots,
u_{n-m-1}\}$ be the set of vertices that do not topple. Assume further,
perhaps after relabeling, that  
$c(u_0)\geq c(u_1)\geq \cdots \geq c(u_{n-m-1})$.  The following are
true: 
\begin{enumerate} 
\item $c(w_0) = c(v_k) = n-1$.
\item $n-i\leq c(w_i)\leq n-1$, for $i = 1,\dots, m-1$.
\item $n-m-i-1\leq c(u_i)\leq n-m-1$, for $i = 0,\dots, n-m-1$.
\end{enumerate}
\end{lemma}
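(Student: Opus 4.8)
The plan is to analyze the avalanche in $K_{n+1}$ by exploiting the fact that every vertex topples at most once (Corollary~\ref{lem:ToppleOnce}, since adding a grain at $v_k$ makes $v_k$ adjacent to the sink—actually every vertex is adjacent to every other in $K_{n+1}$) and that in the complete graph a toppling vertex sends exactly one grain to each other vertex. So if I track the toppling sequence $w_0,w_1,\dots,w_{m-1}$, then at the moment $w_i$ topples it has received exactly $i$ grains (one from each of $w_0,\dots,w_{i-1}$) plus possibly the initial added grain. I will first establish (1): the first vertex to topple must be $v_k$ itself, because no other vertex has received any grain yet, so the only vertex that could be unstable right after the addition is $v_k$; and for $v_k$ to topple it must satisfy $c(v_k)+1\geq n$, i.e.\ $c(v_k)\geq n-1$; since $c$ is stable, $c(v_k)\leq n-1$, forcing $c(w_0)=c(v_k)=n-1$.

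For (2), consider $w_i$ with $1\leq i\leq m-1$. Just before $w_i$ topples, it has received exactly $i$ grains from $w_0,\dots,w_{i-1}$ (it did not receive the added grain, since that went to $v_k=w_0$, and $w_i\neq w_0$ as all toppling vertices are distinct). For $w_i$ to be unstable at that point we need $c(w_i)+i\geq n$, hence $c(w_i)\geq n-i$. Combined with stability $c(w_i)\leq n-1$, this gives $n-i\leq c(w_i)\leq n-1$. I should double-check the subtle point that $w_i$ could conceivably have received grains only from a subset of the earlier topplers, but in $K_{n+1}$ every topple of a distinct vertex contributes exactly one grain to $w_i$, so it has received precisely $i$ grains—this is the crux and where the completeness of the graph is essential.

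For (3), let $u_0,\dots,u_{n-m-1}$ be the non-toppling vertices, reindexed so that $c(u_0)\geq\cdots\geq c(u_{n-m-1})$. Each $u_j$ received exactly $m$ grains during the whole avalanche (one from each of the $m$ topplers), none of which was the added grain, so at the end of the avalanche $u_j$ holds $c(u_j)+m$ grains, and since the final configuration is stable this gives $c(u_j)+m\leq n-1$, i.e.\ $c(u_j)\leq n-m-1$ for every $j$; in particular $c(u_j)\leq n-m-1$ as claimed for $j=0$, and by monotonicity for all $j$. For the lower bound $n-m-i-1\leq c(u_i)$, I expect the argument to go through Proposition~\ref{prop:park}: the sandpile $c$ is recurrent iff $\smax_{K_{n+1}}-c=(n-1-c(v_1),\dots,n-1-c(v_n))$ is an $n$-parking function, so the sorted values $n-1-c(v_j)$ satisfy $a_t\leq t$. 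Restricting attention to the non-toppling vertices and using that $c(u_i)\leq n-m-1$ (so $n-1-c(u_i)\geq m$), one sees that the values $n-1-c(u_0)\leq\cdots\leq n-1-c(u_{n-m-1})$ occupy sorted positions $\geq$ some threshold; combining the parking-function inequality with the positions these values must occupy in the full sorted list yields $n-1-c(u_i)\leq m+i$, i.e.\ $c(u_i)\geq n-m-i-1$. I would also need (2)'s bound $c(w_i)\le n-1$ to control the lower part of the sorted list, ensuring the non-toppling values really do sit in the top $n-m$ positions.

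The main obstacle will be item (3)'s lower bound: the upper bounds in (1)–(3) are immediate from stability/unstability counting, but the lower bound $c(u_i)\geq n-m-i-1$ genuinely requires the recurrence of $c$, and I will need to carefully interleave the toppling-count inequalities on the $w_i$ with the parking-function condition on the full vector $\smax_{K_{n+1}}-c$ to pin down where the sorted values $n-1-c(u_i)$ sit. I anticipate the cleanest route is: show the $n-m$ values $\{n-1-c(u_i)\}$ are exactly the $n-m$ largest entries of $\smax_{K_{n+1}}-c$ (using $c(w_i)\ge n-i\ge n-m+1-(m-1)$... actually using $c(u_i)\le n-m-1<n-i\le c(w_i)$ is false in general, so I may instead need a more direct counting argument comparing how many entries of $\smax-c$ exceed any given value), then apply $a_t\le t$ at the appropriate index $t=m+i$.
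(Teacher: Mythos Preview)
Your proposal is correct and follows essentially the same route as the paper: stability for the upper bounds, counting grains received for the lower bounds on the $w_i$, and the parking-function characterization (Proposition~\ref{prop:park}) for the lower bound on the $c(u_i)$.

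One small clarification: your worry at the end that ``$c(u_i)\le n-m-1<n-i\le c(w_i)$ is false in general'' is misplaced. Once you have established (2) and the upper bound in (3), you know $c(w_j)\ge n-j\ge n-(m-1)=n-m+1$ for every $j\in\{0,\dots,m-1\}$, while $c(u_\ell)\le n-m-1$ for every $\ell$. Hence $c(u_\ell)<c(w_j)$ for all pairs $(\ell,j)$, so in the non-decreasing rearrangement $p'$ of $\smax_{K_{n+1}}-c$ the first $m$ positions are occupied by the values $n-1-c(w_j)$ and the last $n-m$ positions by the values $n-1-c(u_\ell)$. Thus $p'(m+i)=n-1-c(u_i)\le m+i$, giving $c(u_i)\ge n-m-i-1$ directly, with no need for a ``more direct counting argument.'' This is exactly how the paper argues as well.
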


\begin{proof}
First note that since $c$ is stable then $c(v) \leq n-1$.
Since $m \geq 1$, then $w_0 = v_{k}$ must topple. Thus, $c(w_0)=n-1$.
Corollary \ref{lem:ToppleOnce} implies that each $w_{j}$ appears exactly
once in the toppling sequence. Moreover, when a vertex topples, it adds
one grain of sand to every other non-sink vertex of $K_{n+1}$. Thus,
toppling vertices $w_0,\dots, w_{i-1}$ adds $i$ grains of sand to
$w_{i}$. Since this vertex must topple next, then $c(w_i)+i \geq n $. So,
$n-i \leq c(w_i) \leq n-1.$  
On the other hand, since $u_{i}$ does not topple, then
$c(u_i) \leq n-m-1$ for $0\leq i \leq n-m-1$. By Proposition
\ref{prop:park}, $p=\smax_{K_{n+1}} - c$ is an $n$-parking
function. Let $p'$ be its non-decreasing rearrangement.
Note that the first $m$ entries in $p'$ correspond to the $m$
vertices that topple and the last $n-m$ entries correspond to the
vertices that do not topple. So $p'(m+i)= n-1 - c(u_i)$. Since
$p'(m+i) \leq m+i$, then $c(u_i) 
\geq n-m-i$. So, $n-m-i-1 \leq c(u_i) \leq n-m-1$.   
\end{proof}


\begin{prop}
\label{lem:SymmPoly}
Let $\lambda_{m}$ denote the number of principal avalanches of size $m$
in $K_{n+1}$. Then  
$$\sA_{K_{n+1}}(x_{1},\dots,x_{n}) = \sum_{m=0}^n
\frac{\lambda_{m}}{\binom{n}{m}}e_{m}(x_{1},\dots,x_{n}).$$ 
\end{prop}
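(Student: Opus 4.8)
The plan is to show that the avalanche polynomial of $K_{n+1}$ is symmetric, and then read off the coefficients. The key structural claim to establish is a \emph{transitivity} property: if $c$ is a recurrent sandpile on $K_{n+1}$ and $\sigma$ is any permutation of the non-sink vertices $\{v_1,\dots,v_n\}$, then $\sigma$ maps recurrent sandpiles to recurrent sandpiles, and moreover it intertwines principal avalanches in the sense that $\nu(\sigma c, \sigma v_k) = \sigma \cdot \nu(c, v_k)$ (the toppling vector of the principal avalanche of $\sigma c$ at $\sigma v_k$ is the image under $\sigma$ of the toppling vector of $c$ at $v_k$). The first part is immediate from Proposition~\ref{prop:park}, since any permutation of a parking function is a parking function (as noted just before Lemma~\ref{lemma:PFConcatenate}) and $\smax_{K_{n+1}} - \sigma c = \sigma(\smax_{K_{n+1}} - c)$ because $\smax_{K_{n+1}}$ is constant. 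For the second part I would use the symmetry of $K_{n+1}$ directly: relabeling the vertices by $\sigma$ is a graph automorphism fixing the sink, so it commutes with the entire toppling dynamics; applying the legal toppling sequence $\sigma(w_0), \sigma(w_1), \dots$ to $\sigma c + 1_{\sigma v_k}$ stabilizes it, and by Proposition~\ref{prop:THEstabilization} this is \emph{the} principal avalanche.

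From this intertwining property, the symmetry of $\sA_{K_{n+1}}$ follows: applying $\sigma$ to $x_1,\dots,x_n$ permutes the monomials $\mu_{K_{n+1}}(c, v_k) = \xx^{\nu(c, v_k)}$ to $\mu_{K_{n+1}}(\sigma c, \sigma v_k)$, and since $(c, v_k) \mapsto (\sigma c, \sigma v_k)$ is a bijection on the set of all (recurrent, non-sink vertex) pairs, the full sum $\sA_{K_{n+1}} = \sum_{c}\sum_{v} \mu_{K_{n+1}}(c,v)$ is invariant. A symmetric polynomial that is a sum of squarefree monomials (every exponent is $0$ or $1$, which holds here by Corollary~\ref{lem:ToppleOnce}, since adding one grain to a vertex adjacent to the sink — and every vertex is adjacent to the sink in $K_{n+1}$ — lets each vertex topple at most once) and whose terms of total degree $m$ therefore all lie in the support of $e_m(x_1,\dots,x_n)$, must be a nonnegative-integer linear combination $\sum_m a_m e_m$. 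The coefficient $a_m$ equals the number of principal avalanches giving any one fixed squarefree degree-$m$ monomial, say $x_1 x_2 \cdots x_m$. Since $e_m$ has $\binom{n}{m}$ terms and these account for all principal avalanches of size $m$, summing gives $\lambda_m = a_m \binom{n}{m}$, i.e.\ $a_m = \lambda_m / \binom{n}{m}$, which is the claimed formula.

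The main obstacle I anticipate is making the intertwining property $\nu(\sigma c, \sigma v_k) = \sigma \cdot \nu(c, v_k)$ fully rigorous — specifically, being careful that the toppling vector, which records \emph{how many times} each vertex fires, transforms correctly under relabeling, and that a legal toppling sequence for $c + 1_{v_k}$ pushes forward to a legal one for $\sigma c + 1_{\sigma v_k}$. This is conceptually clear from graph automorphism invariance of the sandpile model, but one should state it as a small lemma. A secondary point to handle cleanly is the claim that a symmetric polynomial all of whose monomials are squarefree is a linear combination of the $e_m$: this is true because within each total degree $m$, symmetry forces all $\binom{n}{m}$ squarefree monomials to appear with equal coefficient, and those are exactly the monomials of $e_m$; one should also confirm that no non-squarefree monomial can appear, which is exactly Corollary~\ref{lem:ToppleOnce} applied to $K_{n+1}$.
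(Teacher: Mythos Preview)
Your proposal is correct and follows essentially the same approach as the paper: both use Corollary~\ref{lem:ToppleOnce} to get squarefreeness and the action of the symmetric group on the non-sink vertices (as graph automorphisms of $K_{n+1}$ fixing the sink) to conclude that all squarefree monomials of a given degree appear with the same coefficient, from which the formula follows. The only difference is that the paper also exhibits, for each $A\subseteq [n]$, an explicit recurrent $c$ realizing the monomial $\mu_A$ --- but this is not needed for the stated proposition, so your streamlined version is fine.
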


\begin{proof}
First note that each vertex can topple at most once in any principal
avalanche by Corollary \ref{lem:ToppleOnce}.  So every monomial in
$\sA_{K_{n+1}}$ is square-free and completely characterized by its support.
Fix an integer $m$ with $1\leq m\leq n$. Let $A \subseteq [n]$ with
$|A| = m$ and let $\mu_{A} = \prod_{i \in A} x_i$. Consider the sandpile
$c$ defined by 
$$c(v_i) = \begin{cases}
n-1 & \text{if } i\in A,\\
n-1-m  & \text{if } i\notin A.
\end{cases}$$
 Note that $\smax - c(v_{i}) = 0$ if $i\in A$ and $\smax-c(v_{i}) = m$
 if $i \notin A$.
The non-decreasing rearrangement of $\smax-c$ is is
$(\underbrace{0,0,\dots,0}_m,\underbrace{m,m,\dots,m}_{n-m})$ which is a
parking function.  Thus, by Proposition \ref{prop:park}, the sandpile $c$ is
recurrent.  By Lemma \ref{lem:WhatTopples}, $\mu(c,v_{i}) = \mu_{A}$ for any
vertex $v_{i}$ with $i\in A$. 
To complete the proof, we must show that if $A = (i_{1},\dots,i_{m})$
and $A' = (i_{1}',\dots, i_{m}')$ are two ordered 
subsets of $[n]$ of cardinality $m$, then the number of principal
avalanches that produce the toppling sequence $(v_{i_{1}},\dots, v_{i_{m}})$
equals the number of principal avalanches that produce the toppling
sequence $(v_{i_{1}'},\dots, v_{i_{m}'})$.  However, this follows directly from
the symmetry of $K_{n+1}$. Explicitly, if $\pi$ denotes the permutation that sends
$v_{i_{j}} \to v_{i_{j}'}$ for each $j=1,\dots, m$. Then $\mu_{K_{n+1}}(c,v_{i_{1}}) =
\mu_{A}$ if and only if $\mu_{K_{n+1}}(c',v_{i_{1}'}) = \mu_{A'}$, where
$c'$ is the sandpile obtained by permuting the entries of $c$
according to $\pi$. The explicit form of the coefficient of
$e_{m}(x_{1},\dots,x_{n})$ follows from the fact that this polynomial
has $\binom{n}{m}$ monomials.
\end{proof}

The coefficients $\lambda_{m}$ in Proposition \ref{lem:SymmPoly} were
computed in \cite[Propositions
4 and 5]{CDR04}. Explicitly,  $\lambda_{0} = n(n-1)(n+1)^{n-2}$ and
\[\lambda_{m} = \binom{n}{m}m^{m-1}(n-m+1)^{n-m-1}, \text{ for } 1\leq
m \leq n.\] 
We include a proof of the latter result in order to
correct a mistake in their original argument. However, we also want
to point out that the coefficient $\lambda_{m}$ is also the number of
principal avalanches with burst size $m$ since every non-sink vertex
in $K_{n+1}$ is adjacent to the sink.

\begin{defn}
\label{defn:phi}
Let $c\in \sg(K_{n+1})$ and $v_i\in \rV$ such that
when a grain of sand is added to $v_i$, an avalanche of size $m\geq 1$
occurs. Define the function  
$$\phi: \sg(K_{n+1}) \times \rV \longrightarrow
\rV\times
\binom{\rV\setminus\{v_{i}\}}{m-1}\times
\sg(K_{m})\times\sg(K_{n-m+1}),$$ 
 such that $\phi(c,v_i) = (v_i,J,c_1,c_2)$,  where
$J = \{w_{1},\dots, w_{m-1}\}$ is the set of $m-1$ vertices that
topple  other than $w_{0} = v_{i}$. The sandpile $c_{1}$ in
$K_{m}$ is defined by $$c_{1} = (c(w_{1})-(n-m+1),\dots,
c(w_{m-1})-(n-m+1)),$$ and the sandpile $c_{2}$ in $K_{n-m+1}$ is
defined by the values $c(v_{k})$ for $v_{k}\notin J\cup\{v_{i}\}$.  
\end{defn}


\begin{ex}
\label{ex:phi}
Consider the recurrent sandpile $c = ( 8,7,8,1,0,3,7,2,4 )$ on
$K_{10}$. Note that adding a grain of sand at $v_1$ causes an
avalanche of size $m=4$.  In this case  $J = \{ v_2, v_3, v_7
\}$, $c_1 = ( 7-6,8-6,7-6 ) = (1,2,1)$ and $c_2 = (1,0,3,2,4)$. In
\cite{CDR04}, the authors define the sandpile $c_{1}$ by substracting
$m-2$ instead of $n-m+1$. In here, this would result in the sandpile
$(5, 6, 5)$ that is not even a stable sandpile on $K_{4}$. 
\end{ex}

\begin{lemma}
The map $\phi$ described in Definition \ref{defn:phi} is a bijection.
\end{lemma}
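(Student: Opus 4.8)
The plan is to construct an explicit two-sided inverse to $\phi$, thereby proving bijectivity directly rather than via a counting argument. Given data $(v_i, J, c_1, c_2)$ in the target, where $J = \{w_1,\dots,w_{m-1}\} \subseteq \rV\setminus\{v_i\}$, $c_1 \in \sg(K_m)$, and $c_2 \in \sg(K_{n-m+1})$, I want to reassemble a recurrent sandpile $c$ on $K_{n+1}$ together with the marked vertex $v_i$ such that adding a grain at $v_i$ produces an avalanche of size exactly $m$ whose toppling set is $\{v_i\}\cup J$. The reconstruction should be: on the $m-1$ vertices of $J$, set $c(w_j) = c_1(w_j) + (n-m+1)$; set $c(v_i) = n-1$; and on the remaining $n-m$ vertices set the values according to $c_2$. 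I would then need to argue three things: (a) the resulting $c$ is a recurrent sandpile on $K_{n+1}$; (b) the principal avalanche of $c$ at $v_i$ has size $m$ with toppling set exactly $\{v_i\}\cup J$; and (c) $\phi$ applied to $(c,v_i)$ recovers the original tuple, and conversely starting from $(c,v_i)$, applying $\phi$ then the reconstruction recovers $(c,v_i)$.

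The key technical step is (a)–(b), and here I would lean on Proposition~\ref{prop:park} (the parking function characterization) together with Lemma~\ref{lemma:PFConcatenate} (concatenation of parking functions) and Lemma~\ref{lem:WhatTopples}. Translating through $\smax - c$: the complement of $c$ on $J$ is $\smax_{K_m} - c_1$ shifted so its entries become part of the ``small'' block, on $v_i$ it is $0$, and on the remaining vertices it is $\smax_{K_{n-m+1}} - c_2$ shifted up by $m$. After sorting, the first $m$ entries are $0$ followed by the sorted complement of $c_1$ (all $\le m-1$), and the last $n-m$ entries are $m$ plus the sorted complement of $c_2$ (each $\le m + i$); this is exactly the concatenation pattern of Lemma~\ref{lemma:PFConcatenate} applied to the parking function $(0)$ prepended to the complement of $c_1$, concatenated with the complement of $c_2$. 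So $c$ is recurrent. For the avalanche behavior, I would run the toppling process starting at $v_i$: since $c(v_i) = n-1$ it topples; each $w_j \in J$ has $c(w_j) \ge n-m+1$, and after enough topplings within $\{v_i\}\cup J$ it accumulates enough grains to topple, while each $u$ outside has $c(u)$ bounded above by $n-m-1$ so that even after all $m$ topplings it stays stable — this is precisely the content reversed from the inequalities in Lemma~\ref{lem:WhatTopples}. The abelian property (Proposition~\ref{prop:THEstabilization}) means I don't have to worry about the order in which the $w_j$ topple.

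Step (c) is then bookkeeping: the subtraction of $n-m+1$ in the definition of $c_1$ and the shift by $n-m+1$ in the reconstruction are inverse operations, and the values on $\rV\setminus(J\cup\{v_i\})$ are carried across untouched, so the round trip in both directions is the identity. I expect the main obstacle to be pinning down (b) precisely: I need to verify not merely that $\{v_i\}\cup J$ \emph{can} topple but that \emph{exactly} this set topples — i.e., that no vertex outside the set ever becomes unstable during the cascade. This requires checking that the worst case (a vertex $u$ receiving grains from all $m$ topplings) still leaves $c(u) + m \le n-1$, which holds because recurrence forces $c(u) \le n-m-1$ via the parking-function inequality $p'(m+i) \le m+i$ as in Lemma~\ref{lem:WhatTopples}(3). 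Conversely I must check that every $w_j$ does eventually become unstable, which follows because after the other $m-1$ vertices of $\{v_i\}\cup J$ have toppled, $w_j$ has received $m-1$ grains, giving $c(w_j) + (m-1) \ge (n-m+1) + (m-1) = n$. These two bounds together close the argument.
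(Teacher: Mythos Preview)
Your approach is the same as the paper's: both construct the identical inverse map (set $c(v_i)=n-1$, shift $c_1$ up by $n-m+1$ on $J$, copy $c_2$ on the rest) and verify that the resulting $c$ is recurrent via Lemma~\ref{lemma:PFConcatenate} applied to the concatenation of $0$, $\smax_{K_m}-c_1$, and $\smax_{K_{n-m+1}}-c_2$ shifted by $m$. You go further than the paper in attempting to verify that the reconstructed $(c,v_i)$ actually has avalanche size exactly $m$ with toppling set $\{v_i\}\cup J$; the paper simply asserts this with ``Clearly, $\phi(c,v)=(v,J,c_1,c_2)$.''

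There are two issues to flag. First, you omit the well-definedness of $\phi$: you never check that $c_1$ and $c_2$ extracted from an arbitrary $(c,v_i)$ with avalanche size $m$ are themselves recurrent on $K_m$ and $K_{n-m+1}$. The paper handles this first, using the inequalities of Lemma~\ref{lem:WhatTopples} to show that $\smax-c_1$ and $\smax-c_2$ are parking functions.

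Second, your argument that every $w_j\in J$ topples is circular as written. You only use the bound $c(w_j)\ge n-m+1$ (from $c_1(w_j)\ge 0$) and then say that after the other $m-1$ vertices topple, $w_j$ has at least $n$ grains. But you cannot assume the other $m-1$ vertices topple --- that is what you are proving. The bound $c(w_j)\ge n-m+1$ alone is not enough: for $m\ge 3$ a single toppling of $v_i$ would leave such a $w_j$ with only $n-m+2<n$ grains. What is needed is the full parking-function structure of $c_1$: if you order $J$ as $z_1,\dots,z_{m-1}$ by decreasing $c_1$-value, then recurrence of $c_1$ gives $c_1(z_k)\ge m-1-k$, hence $c(z_k)\ge n-k$, so after $v_i,z_1,\dots,z_{k-1}$ have toppled, $z_k$ has received $k$ grains and is unstable. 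This inductive argument exhibits a legal toppling sequence through all of $\{v_i\}\cup J$, and together with your (correct) bound $c(u)+m\le n-1$ for $u\notin\{v_i\}\cup J$ closes step~(b).
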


\begin{proof}
First we need to show that the map $\phi$ above is well-defined, that
is, we need to show that $c_1$ and $c_2$ are, in fact, recurrent
sandpiles on $K_m$ and $K_{n-m+1}$, respectively.  
To show $c_1$ is recurrent, let $J = \{w_1,\dots, w_{m-1}\}$ such that
$c(w_i)\leq c(w_{i+1})$ for $1\leq i \leq m-2$.  By Lemma
\ref{lem:WhatTopples}, for $1\leq i \leq
m-1$, we have that  $n-i\leq c(w_i)\leq n-1$. So,  $$n-i - (n-m+1) \leq
c(w_{i}) - (n-m+1) \leq n-1 - (n-m+1)$$ and 
$m-i-1\leq c_1(w_i) \leq m-2.$
This implies $c_1$ is a stable sandpile on $K_m$.
Consider $p_1 = \smax_{K_m} - c_1$.  For $1\leq i \leq m-1$,  
$0 \leq p_1(w_i) \leq i-1$, so $p_1$ is a parking function and $c_1$
is recurrent.
%
To show $c_2$ is recurrent, let $\{u_0,\dots, u_{n-m-1}\} =
\rV\setminus (J\cup \{v_i\})$ such that $c(u_i)\leq
c(u_{i+1})$ for $0\leq i \leq n-m-2$.  By Lemma \ref{lem:WhatTopples},
$$n-m-i-1 \leq c(u_i)\leq n-m-1.$$ 
Since $c_2(u_i) = c(u_i)$ then $c_{2}$ is stable in $K_{n-m+1}$.
Consider $p_2 = \smax_{K_{n-m+1}} - c_2$.   For $0\leq i \leq n-m-2$, 
we have $ 0\leq p_2(i) \leq i. $
Since $p_2$ is a parking function, $c_2$ is recurrent.



The fact that the map $\phi$ is injective follows immediately from the
definition of $c_{1}$ and $c_{2}$.  
Finally, we will show that $\phi$ is onto. Given $(v,J,c_1,c_2)$ 
we define $c$ as follows. First, let $c(v) = n-1$. Now, for each
$w_{i} \in J$, define $c(w_{i})$ by adding $n-m+1$ to the $i$th entry
in $c_{1}$. The remaining $n-m$ entries in $c$ are filled with the
entries in $c_{2}$. 
Since $c_1$ and $c_2$ are recurrent, Proposition \ref{prop:park} implies
$p_{1} = \smax_{K_m}-c_1$ and $p_{2} = \smax_{K_{n-m+1}}-c_2$  are
$(m-1)$ and $(n-m)$-parking functions, respectively.
By Lemma \ref{lemma:PFConcatenate}, concatenating $p_1$ and $p_2
+ \overline{m}$ defines an $(n-1)$-parking function
$p'$, where $\overline{m} = (m,\dots, m)$. Furthermore, concatenating $0$ and $p'$
gives an $n$-parking function $p$. 
Moreover, $\smax_{K_{n+1}}-p$ is a rearrangement
of $c$, so $c$ is a recurrent sandpile on $K_{n+1}$. 
Clearly,  $\phi(c,v) = (v,J,c_1,c_2)$ and this completes the proof. 
\end{proof}
 
From the bijection $\phi$ we are able to compute the number
$\lambda_{m}$ of
principal avalanches of size $m>0$. Given  $A \subseteq [n]$ with
$|A| = m$, Proposition \ref{lem:SymmPoly} states that $\lambda_{m}/\binom{n}{m}$ is the
number of principal avalanches with avalanche monomial $\mu_{A} =
\prod_{i \in A} x_i$. The bijection $\phi$ implies that this number
equals the number of four-tuples $(v_i,J,c_1,c_2)$ with $J\cup
\{v_{i}\} = A$.  There are $m$ ways to pick $v_i$.
Cayley's formula implies that the number of recurrents on $K_m$ and
$K_{n-m+1}$ is $m^{m-2}$ and $(n-m+1)^{n-m-1}$,
respectively. Therefore, 
\[\lambda_{m} = \binom{n}{m}\cdot m\cdot m^{m-2}(n-m+1)^{n-m-1}= \binom{n}{m} m^{m-1}(n-m+1)^{n-m-1}.\] 




\section{The Avalanche Polynomial of the Wheel}\label{wheel}

The wheel graph, denoted $W_{n}$, is a cycle on $n \geq 3$ vertices
with an 
additional \emph{dominating vertex}.  Throughout, the vertices in the
cycle will be labeled clockwise as $v_0,\dots, v_{n-1}$, where the
indices are taken modulo $n$. 
The
dominating vertex, denoted $s$, will always be assumed to be the sink.

The sandpile group of $W_{n}$ was first computed by Biggs in
\cite{B99}:
\[
\sg(W_{n}) =
\begin{cases}
 \zz_{l_{n}} \oplus \zz_{l_{n}} & \text{ if } n \text{ is
  odd }\\
 \zz_{f_{n}} \oplus \zz_{5f_{n}} & \text{ if } n \text{ is
  even }
\end{cases}
\]
where $\{l_{n}\}$ is the Lucas sequence and $\{f_{n}\}$ is the
Fibonnaci sequence. These sequences are defined by initial
conditions $l_{0} = 2, l_{1} =1$ and $f_{0} = 1, f_{1} = 1$,
respectively, and the recursion $x_{n} = x_{n-1} + x_{n-2}$. There are
many relationships among these numbers. For example, $l_{n} = f_{n-1}
+ f_{n+1}$. Morever, the order of $\sg(W_{n})$
equals the number of spanning trees $\tau(W_{n})$ in $W_{n}$. This 
number equals $\tau(W_{n}) = l_{2n} - 2$, see \cite{H72, BY06}. 

We have already computed the avalanche polynomial of $W_{3}$
since $W_{3} = K_{4}$. In this case, 
\[\sA_{W_{3}}(x_{0},x_{1},x_{2}) = 9x_{0}x_{1}x_{2} + 2(x_{0}x_{1} +
x_{1}x_{2} + x_{2}x_{0}) + 3(x_{0}+x_{1}+x_{2}) + 24.\]
Observe that the set of principal avalanches of size $0 < m < 3$ is
evenly partitioned into $n=3$ subsets. Moreover,
$\sA_{W_3}(x_0,x_1,x_2)$ is a linear combination of \emph{cyclic
  polynomials}. We will show that this characterizes
$\sA_{W_n}(x_0,\dots, x_{n-1})$. 

\begin{defn}
Let $m$ be an integer such that $1 \leq m \leq n-1$. We will denote by 
$w_{m}(x_{0},\dots,x_{n-1})$ the 
\emph{cyclic polynomial} of degree $m$ on variables
$x_0,\dots, x_{n-1}$ defined as
\[w_{m}(x_{0},\dots,x_{n-1}) = \sum_{i=0}^{n-1}x_{i}x_{i+1}\dotsm x_{i+m-1}.\]
\end{defn}

First note that for each $1\leq m \leq
n-1$, $w_{m}$ is the sum of $n$ terms of degree $m$. 
For example, $w_{1} = x_{0} + \dotsm + x_{n-1}$.
For the case $m=n$, the above definition would give
\[w_{n} = \sum_{i=0}^{n-1}x_{i}x_{i+1}\dotsm x_{i+n-1} = nx_{0}\dotsm x_{n-1}. \]
However, we will remove the coefficient $n$ and define 
\[w_{n}(x_{0},\dots,x_{n-1}) = x_{0}\dotsm x_{n-1}.\]

In \cite{DaRoFPSAC03}, Dartois and Rossin gave exact results on
the distribution of avalanches on $W_{n}$. Their approach
consisted in showing that the recurrents on $W_{n}$ can be
seen as words of a \emph{regular language}. They built an automaton
associated to this language and used the concept of \emph{transducers}
to determine the exact distribution of avalanche lengths in this
graph. Here we take a different approach focused solely on
the structure of the recurrent sandpiles.

Note that the degree of every non-sink vertex in $W_{n}$ is $3$. So
any stable sandpile on this graph can be written as a word of length $n$
in the alphabet $\{0,1,2\}$. 
Applying Dhar's Burning Criterion (Proposition \ref{prop:Burn}),
Cori and Rossin \cite{CR00} showed that a sandpile on $W_{n}$ is
recurrent if and only if there is at least one vertex 
with 2 grains of sand and between any two vertices with $0$ grains,
there is at least one vertex with 2 grains. 

\begin{defn}
Let $m$ be an integer with $1\leq m \leq n-1$. 
A sandpile $c$ in $W_{n}$ has a \emph{maximal 2-string of length $m$}
if there 
are vertices $v_i,v_{i+1},\dots, v_{i+m-1}$, such that  $c(v_i) =
\cdots = c(v_{i+m-1}) = 2$ and $c(v_{i-1}) \neq 2 \neq c(v_{i+m})$.  
\end{defn}

Note that $\smax_{W_{n}} = 2^{n}$ is the unique recurrent
with a maximal $2$-string of length $n$.

\begin{lemma}\label{lem:Toppling2strings}
Let  $c\in \sg(W_{n})$. The principal avalanche of $c$ at a non-sink
vertex $v$ has size $1\leq m \leq n-2$ if and only if $v$ is part of a
maximal $2$-string of length $m$. 
\end{lemma}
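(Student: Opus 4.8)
The plan is to analyze the toppling sequence of $c + 1_v$ on $W_n$ directly, using the characterization of recurrents on the wheel (at least one vertex with $2$ grains, and between any two $0$-vertices at least one $2$-vertex) together with Corollary \ref{lem:ToppleOnce}, which guarantees that every vertex topples at most once during a principal avalanche. The key structural observation is that the dominating vertex $s$ is adjacent to every cycle vertex, so whenever a cycle vertex $v_i$ topples it sends one grain to each cycle neighbor $v_{i-1}, v_{i+1}$ and one grain to $s$. Thus a cycle vertex $v_j$ receives at most one grain from each of its two cycle neighbors, so in order for $v_j$ to topple it must already hold $2$ grains unless \emph{both} $v_{j-1}$ and $v_{j+1}$ topple before it --- but by Corollary \ref{lem:ToppleOnce} no vertex topples twice, which will force the set of toppled vertices to be a single contiguous arc of the cycle containing $v$.

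First I would prove the ``only if'' direction. Suppose the principal avalanche of $c$ at $v$ has size $m$ with $1 \le m \le n-2$. Since $m \ge 1$, $v$ itself topples, so $c(v) = 2$ (adding one grain makes it unstable, threshold $3$). Let $S$ be the set of $m$ vertices that topple; I claim $S$ is a contiguous arc of the cycle. Order the toppling sequence $w_0 = v, w_1, \dots, w_{m-1}$. I would argue by induction that at the moment $w_t$ topples, the set $\{w_0, \dots, w_t\}$ is a contiguous arc: when $w_{t+1}$ becomes unstable it must have reached $3$ grains, having started with at most $2$, so it gained at least one grain, hence received a grain from some already-toppled neighbor, so $w_{t+1}$ is adjacent on the cycle to the arc $\{w_0,\dots,w_t\}$; and since $w_{t+1} \notin \{w_0,\dots,w_t\}$, it extends the arc on one end. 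So $S = \{v_i, v_{i+1}, \dots, v_{i+m-1}\}$ for some $i$. Next I would show every vertex of $S$ started with exactly $2$ grains. The two endpoints $v_i$ and $v_{i+m}$ (indices mod $n$)\footnote{Here I mean $v_{i+m}$ and $v_{i-1}$; let me restate.} of the arc each have only one cycle-neighbor inside $S$, so each receives at most one grain (from $s$ firing it does not, since $s$ is the sink and never topples) before toppling --- hence each endpoint had $\ge 2$ grains, i.e.\ exactly $2$. For an interior vertex $w$ of the arc: if $w$ had $\le 1$ grain, it would need $2$ more grains to topple, hence both its cycle-neighbors must topple before $w$; tracing this requirement around the arc and using that $w_0 = v$ has $2$ grains, one shows the avalanche cannot proceed --- more carefully, the endpoints of the arc never receive two grains, so they topple only after accumulating from a single side, which, combined with Corollary \ref{lem:ToppleOnce}, pins down that every vertex in $S$ had exactly $2$ grains. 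Finally, the two vertices $v_{i-1}$ and $v_{i+m}$ bounding the arc did \emph{not} topple, so each received at most one grain and stayed below threshold; in particular neither had $2$ grains initially (else, having received a grain, it would topple). Hence $c(v_{i-1}) \ne 2 \ne c(v_{i+m})$, so the $2$-string $v_i,\dots,v_{i+m-1}$ is maximal of length exactly $m$, and $v$ is part of it.

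For the ``if'' direction, suppose $v$ is part of a maximal $2$-string $v_i, \dots, v_{i+m-1}$ of length $m$ with $1 \le m \le n-2$. Then $c(v) = 2$, so $v$ becomes unstable. I would show the avalanche topples exactly the vertices $v_i,\dots,v_{i+m-1}$ and no others. Starting the toppling at $v$, each topple of a vertex in the string passes a grain to its neighbors; by an induction from the ends inward (or by computing the toppling vector via Proposition \ref{prop:laplacian}), the string ``burns inward'': the vertices of the $2$-string all reach threshold and topple, each exactly once. The crucial point is that the two boundary vertices $v_{i-1}$ and $v_{i+m}$ have $c$-value $\ne 2$, i.e.\ at most $1$; each of them receives exactly one grain over the course of the avalanche (from the single adjacent vertex in the string that topples), reaching at most $2 < 3$, so they do not topple, and the avalanche stops. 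Thus exactly $m$ vertices topple and the avalanche size is $m$. The assumption $m \le n-2$ is what guarantees that $v_{i-1}$ and $v_{i+m}$ are distinct vertices (so neither receives two grains) and that there is a genuine boundary; this is why the statement excludes $m = n-1$ and $m = n$, which behave differently (the case $m = n$ being $\smax_{W_n}$, where the whole cycle topples and grains cascade into the sink).

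\textbf{Main obstacle.} The delicate step is the interior-vertex analysis in the ``only if'' direction: showing that no vertex of the toppled arc could have started with fewer than $2$ grains. The naive worry is an avalanche that ``borrows'' from both sides to topple an interior $0$- or $1$-vertex. The resolution rests on Corollary \ref{lem:ToppleOnce} (each vertex topples at most once) combined with the arc structure: once $S$ is known to be a contiguous arc whose two boundary-external vertices never topple, the boundary-internal vertices $v_i$ and $v_{i+m-1}$ can only have received a grain from one side, forcing them to start at $2$; peeling the arc from both ends and repeating forces every vertex in it to start at $2$. Making this peeling argument airtight --- ideally by directly exhibiting the legal toppling sequence $v, $ then alternately the two ends moving inward, and invoking Proposition \ref{prop:laplacian} to verify the intermediate sandpiles --- is where the real work lies, but it is the same ``burning inward'' mechanism used in the cycle lemmas (Lemmas \ref{lem:FirstStage} and \ref{lemma:OtherStages}), adapted to account for the grain each toppling cycle-vertex sends to the sink.
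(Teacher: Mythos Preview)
Your proposal is correct and follows the same line as the paper's proof, which is considerably terser: the paper simply notes that since $m < n-1$ the two cycle-vertices bounding the maximal $2$-string are distinct, so exactly the $m$ vertices of the string topple, and conversely a longer or shorter maximal $2$-string yields a different avalanche size. Your flagged ``main obstacle'' (that interior vertices of the toppled arc must start at $2$) is genuine but is most cleanly handled by propagation \emph{outward from $v$} rather than peeling inward from the endpoints: once you know the toppled set is a contiguous arc and that the two waves from $v$ cannot meet (because $m \le n-2$ keeps $v_{i-1}$ and $v_{i+m}$ distinct), each newly toppled vertex on a given side of $v$ has received exactly one grain (from its neighbor closer to $v$, since the vertex on the far side has not yet toppled), forcing it to have started at $2$.
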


\begin{proof}
Suppose a grain is added to a vertex $v$ that is part of a
maximal 
$2$-string of length $m$.  Since $m<n-1$, the two non-sink vertices
adjacent to the ends of the $2$-string are distinct. Thus,
exactly the $m$ vertices in the maximal $2$-string will topple. On the
other hand, if $v$ is part of a longer or
shorter maximal $2$-string, the avalanche will not have size $m$. 
\end{proof}

Lemma \ref{lem:Toppling2strings} implies that for each $1\leq m \leq
n-2$,  
the number $\lambda_{m}$ of principal avalanches of size $m$ equals
$m$ times the number of maximal $2$-strings of length $m$ over all
recurrents. 
This is not the case for avalanches of size
$n-1$ or $n$ as the following simple lemma shows.

\begin{lemma}
\label{lem:SizeNAvalanche}
For any non-sink vertex $v$ in $W_{n}$,
$\mu(2^n,v) = x_0\cdots x_{n-1}$. Also, let $p$ be an integer with
$0\leq p \leq n-1$. For any non-sink vertex $v$ with $v\neq v_{p}$ we
have   
\begin{align*}
\mu_{W_{n}}(2^p12^{n-p-1},v) &= x_0\cdots x_{n-1}, \\
\mu_{W_{n}}(2^p02^{n-p-1}, v) &= 
\frac{x_0\cdots x_{n-1}}{x_{p}}.
\end{align*}
This implies $\lambda_{n} = n^{2}$ and $\lambda_{n-1} = n(n-1)$.
\end{lemma}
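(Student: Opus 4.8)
The plan is to establish the three toppling-vector identities one at a time, then read off the two count formulas as immediate corollaries. I would first handle $\mu(2^n,v)=x_0\cdots x_{n-1}$: since $2^n=\smax_{W_n}$ is recurrent and $v$ is adjacent to the sink $s$, Corollary \ref{lem:ToppleOnce} guarantees each vertex topples at most once, and the burning criterion (Proposition \ref{prop:Burn}) applied with $u=1_v$ — since adding a grain at $v$ is the same as firing the sink relative to $v$ — shows the toppling vector in the stabilization of $\smax+1_v$ is $(1,\dots,1)$. Hence $\mu(2^n,v)=x_0\cdots x_{n-1}$.

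Next I would treat the case $c=2^p12^{n-p-1}$ with $v\ne v_p$. First one must check $c$ is recurrent: by the Cori–Rossin criterion quoted just before the Definition of maximal $2$-strings (there is at least one vertex with $2$ grains, and between any two $0$'s there is a $2$), $c$ qualifies since it has no $0$'s at all. Adding a grain at any $v\ne v_p$, the vertex $v$ has $2$ grains, is therefore unstable, and I would argue as in Lemma \ref{lem:Toppling2strings}'s proof that the avalanche spreads around the entire cycle: $v$ topples, feeding its neighbors; eventually the grain reaches $v_p$ which now has $2$ grains and topples too, so every non-sink vertex topples. By Corollary \ref{lem:ToppleOnce} each topples exactly once, giving $\mu_{W_n}(2^p12^{n-p-1},v)=x_0\cdots x_{n-1}$. (One can also see this algebraically: if $f=(1,\dots,1)$ then $\rL_{W_n}f$ equals the vector recording edges to the sink, which is $1^n$ since every non-sink vertex has exactly one edge to $s$, so $c+1_v-\rL f=c+1_v-1^n=2^p02^{n-p-1}+$ correction — and this is stable, confirming $f$ is the toppling vector.)

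For $c=2^p02^{n-p-1}$ with $v\ne v_p$, the same algebraic observation is cleanest: $c$ is recurrent by the Cori–Rossin criterion (it has a $2$, and there is only one $0$, vacuously separated). I would propose the toppling vector $f$ that is $1$ on every vertex except $v_p$ and $0$ at $v_p$. Then $\rL_{W_n}f$ has entry at $v_j$ equal to $3$ (degree) minus the number of toppling neighbors plus, crucially, the sink-edge contribution; a short computation using the cycle structure shows $c+1_v-\rL f$ is stable and equals a recurrent, and that this $f$ is realized by a legal toppling sequence starting from $v$ and propagating to but not including $v_p$ from both sides. The key point is that $v_p$ has $0$ grains, so even after receiving one grain from each of its two cycle-neighbors it has only $2<3$ and never topples. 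This yields $\mu_{W_n}(2^p02^{n-p-1},v)=(x_0\cdots x_{n-1})/x_p$.

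Finally, the two counts: for $\lambda_n$, the avalanches of size exactly $n$ are precisely those where every non-sink vertex topples, which by the three identities above occurs exactly for the pairs $(2^n,v)$ over all $v$ ($n$ choices) together with $(2^p12^{n-p-1},v)$ over all $p$ and all $v\ne v_p$ — but wait, one must verify these are the \emph{only} recurrents producing a full avalanche; I would argue that by Lemma \ref{lem:Toppling2strings} any recurrent with a maximal $2$-string of length $\le n-2$ gives a smaller avalanche at vertices in that string, so a size-$n$ avalanche at $v$ forces $v$ to lie in a maximal $2$-string of length $n-1$ or $n$, i.e. $c\in\{2^n\}\cup\{2^p12^{n-p-1}\}$, and one checks directly that only the listed pairs give size $n$ (for $2^p12^{n-p-1}$, adding at $v_p$ gives a different, in fact size-$n-1$, behavior). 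Counting: $2^n$ contributes $n$, and for each of the $n$ values of $p$ there are $n-1$ valid vertices $v\ne v_p$, but these recurrents $2^p12^{n-p-1}$ for distinct $p$ are distinct, giving $n(n-1)$; hmm, that totals $n+n(n-1)=n^2$, matching $\lambda_n=n^2$. Similarly $\lambda_{n-1}=n(n-1)$ comes from the recurrents $2^p02^{n-p-1}$ ($n$ of them) each producing a size-$(n-1)$ avalanche at each of the $n-1$ vertices $v\ne v_p$, plus the contribution of $2^p12^{n-p-1}$ at $v=v_p$, which I must check gives size $n-1$ as well — this is the subtle bookkeeping point and the main obstacle: carefully enumerating which (recurrent, vertex) pairs land in size $n-1$ versus size $n$, making sure nothing is double-counted and the size-$(n-1)$ case from the $2^p12^{n-p-1}$ family is accounted for. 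Once that case analysis is pinned down, the arithmetic $\lambda_{n-1}=n(n-1)$ and $\lambda_n=n^2$ follows.
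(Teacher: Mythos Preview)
Your overall structure matches the paper's (very terse) argument, but two concrete errors need fixing.

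First, your appeal to Proposition~\ref{prop:Burn} for $\mu(2^n,v)$ is a misapplication. The burning criterion concerns $u\oplus c$ where $u$ is the sandpile obtained by firing the sink; in $W_n$ that is $u=1^n$, not $1_v$. Adding a single grain at $v$ is not ``firing the sink relative to $v$.'' The conclusion is still correct, but the right justification is: Corollary~\ref{lem:ToppleOnce} bounds each entry of the toppling vector by $1$, and then you exhibit a legal toppling order (e.g.\ $v,v{+}1,v{-}1,v{+}2,\dots$ around the cycle) and check via Proposition~\ref{prop:laplacian} that $2^n+1_v-\rL\cdot 1^n=1^n+1_v$ is stable.

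Second, and more seriously, your treatment of the pair $(2^p12^{n-p-1},v_p)$ is wrong. You assert, parenthetically in the $\lambda_n$ paragraph and again in the $\lambda_{n-1}$ paragraph, that adding a grain at $v_p$ to $2^p12^{n-p-1}$ produces an avalanche of size $n-1$. In fact $2^p12^{n-p-1}+1_{v_p}=2^n$, which is already stable: the avalanche has size $0$. So this pair contributes to $\lambda_0$, not to $\lambda_{n-1}$. If your claim were correct you would get $\lambda_{n-1}=n(n-1)+n=n^2$, contradicting the stated answer. Once you correct this, the bookkeeping you flagged as ``the main obstacle'' evaporates: the \emph{only} size-$(n-1)$ avalanches are the pairs $(2^p02^{n-p-1},v)$ with $v\neq v_p$, giving exactly $n(n-1)$, and your exclusion argument via Lemma~\ref{lem:Toppling2strings} (forcing $c$ to have at most one non-$2$ entry) then cleanly separates sizes $n-1$, $n$, and the complementary size-$0$ cases.
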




\begin{proof}
Clearly the avalanche monomials for the given recurrents satisfy the
above claims.
Note also that the only avalanches of size $n$ occur on
recurrents of the form $2^p12^{n-p-1}$ and $2^n$. So there are
$n(n-1)+n = n^2$ avalanches of size $n$. The avalanches of size $n-1$
occur on recurrents of the form $2^p02^{n-p-1}$. Hence there are
$n(n-1)$ avalanches of size $n-1$.
\end{proof}

For each $1\leq m \leq n-2$, 
we will count the maximal 2-strings of length $m$ by establishing
a map from the set of recurrents on $W_{n}$ with a given maximal
2-string of length $m$ into the set of recurrents on the \emph{fan
graph} $F_{n-m}$. Let $k\geq 2$, 
the fan graph on $k+1$ vertices, denoted $F_{k}$, is a path on $k$
vertices, plus an additional dominating vertex $s$. 



\begin{prop}
\label{prop:NonMaxRecurrents}
For each $1\leq m \leq n-2$,
there is a bijection between the set of recurrents on $W_{n}$ with
a maximal 2-string of length $m$ starting at $v_0$ and the set of
recurrents on $F_{n-m}$. 
\end{prop}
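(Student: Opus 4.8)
The plan is to construct an explicit bijection by ``cutting'' the wheel open at the maximal $2$-string and identifying the result with a fan. First I would fix the recurrent sandpiles $c$ on $W_n$ whose maximal $2$-string of length $m$ begins at $v_0$, i.e.\ $c(v_0)=\cdots=c(v_{m-1})=2$ and $c(v_{n-1})\neq 2\neq c(v_m)$. The idea is that the vertices $v_0,\dots,v_{m-1}$ carry no information beyond being a maximal $2$-string, so $c$ is completely determined by its restriction to the path $v_m,v_{m+1},\dots,v_{n-1}$ together with the constraints imposed on the endpoints by the burning criterion. I would define the map $\Psi(c)$ to be the sandpile $\bar c$ on $F_{n-m}$ whose path vertices $u_1,\dots,u_{n-m}$ (ordered along the path, with $u_1$ corresponding to $v_m$ and $u_{n-m}$ to $v_{n-1}$) receive the values $\bar c(u_j)=c(v_{m+j-1})$.

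Next I would verify that $\Psi$ lands in the recurrents of $F_{n-m}$. The key tool is the analogue of Cori and Rossin's characterization (used just above in the excerpt): a sandpile on $W_n$ is recurrent iff it has at least one $2$ and between any two $0$'s there is a $2$; one checks that a sandpile on the fan $F_k$ is recurrent iff it has a $2$ and between any two $0$'s (and between a $0$ and either \emph{end} of the path) there is a $2$ --- the endpoints of the fan path behave like ``virtual $0$'s'' because each has degree $2$ in $F_k$. Since $c$ has a maximal $2$-string at $v_0,\dots,v_{m-1}$, we know $c(v_m)\neq 2$ and $c(v_{n-1})\neq 2$, and the block of $2$'s we removed is precisely what guaranteed the burning condition ``wrapping around'' through those vertices; so on the path $v_m,\dots,v_{n-1}$ the only surviving constraints are exactly the fan constraints. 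I would check each direction of the equivalence carefully, paying attention to the degenerate possibilities $c(v_m)=0$ or $c(v_{n-1})=0$.

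For surjectivity and injectivity I would describe the inverse explicitly: given a recurrent $\bar c$ on $F_{n-m}$ with path vertices $u_1,\dots,u_{n-m}$, set $c(v_{m+j-1})=\bar c(u_j)$ for $1\le j\le n-m$ and $c(v_0)=\cdots=c(v_{m-1})=2$. One must confirm that this $c$ is recurrent on $W_n$ (again via the burning criterion: the inserted $2$-block supplies a $2$ between the possible $0$'s at $v_{n-1}$ and $v_m$, and all other constraints are inherited from $\bar c$) and that it has a maximal $2$-string of length exactly $m$ starting at $v_0$ --- here I would use that $\bar c(u_1)\neq 2\neq\bar c(u_{n-m})$, which follows because the fan endpoints act as virtual zeros so a recurrent on $F_k$ cannot have a $2$ at either end... wait, that is false in general, so more care is needed: a recurrent on $F_k$ can have $2$ at an endpoint. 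Thus the correct statement is that $\Psi$ is a bijection onto \emph{all} recurrents of $F_{n-m}$ only after one observes that whether $\bar c(u_1)$ or $\bar c(u_{n-m})$ equals $2$ is irrelevant to recurrence of $c$ on $W_n$ (the inserted block already breaks the $2$-string), and the maximal $2$-string of $c$ has length exactly $m$ regardless --- because $v_{n-1}$ and $v_m$ each get a value from $\bar c$ and are separated from the block by nothing, so if say $\bar c(u_1)=2$ the string would extend. This is the delicate point, and I expect the main obstacle to be getting the endpoint bookkeeping exactly right: one likely needs $n-m\ge 2$ (which holds since $m\le n-2$) and possibly a separate treatment of whether the fan has the ``right'' boundary convention so that $\Psi$ is genuinely onto. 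Once the correspondence is set up correctly, the bijection is immediate; the substance is entirely in matching the two recurrence criteria across the cut.
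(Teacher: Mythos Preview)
Your overall approach---cut out the block of $2$'s and identify the remaining path with the fan---is exactly the paper's, and your map $\Psi$ is the inverse of the paper's ``prepend $m$ two's.'' Where you go wrong is in the endpoint bookkeeping, and the fix is a single observation you are missing: in $F_{n-m}$ the two endpoints of the path have degree~$2$ (one edge to the sink, one to the adjacent path vertex), so a \emph{stable} sandpile on $F_{n-m}$ has endpoint values in $\{0,1\}$. Hence a recurrent on $F_{n-m}$ \emph{never} has a $2$ at an endpoint, and your worry ``if $\bar c(u_1)=2$ the string would extend'' cannot arise. With this in hand, the inverse map automatically produces a maximal $2$-string of length exactly $m$ starting at $v_0$, and the ``delicate point'' you flag disappears.

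Relatedly, your proposed fan recurrence criterion is incorrect. The endpoints do \emph{not} behave like ``virtual $0$'s,'' and you do \emph{not} need a $2$ between a $0$ and either end of the path, nor do you need a $2$ at all. For instance, on $F_3$ the sandpiles $(0,1,1)$ and $(1,0,1)$ are both recurrent but violate your criterion. The correct characterization (derived in the paper from Dhar's burning criterion, using that firing the sink adds one grain to every path vertex) is simply: between any two vertices with $0$ grains there is a vertex with $2$ grains. This matches the wheel criterion verbatim on the path segment, which is why the bijection is immediate once you note that the inserted $2$-block handles the arc through $v_0,\dots,v_{m-1}$ and the endpoint degrees prevent the block from growing.
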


\begin{proof}
Let $c$ be a recurrent sandpile on $F_{n-m}$.  
Dhar's Burning Criterion (Proposition \ref{prop:Burn}) implies that
adding 1 grain of sand to each vertex must result in an avalanche
where every vertex topples exactly once. This 
implies that at least one of the endpoint vertices has 1 grain of sand
or both endpoints have 0 grains of sand and there is an internal
vertex with 2 grains of sand. Moreover, if a 
vertex has $0$ grains of sand, then its neighbors must topple before
it, hence there are no consecutive vertices with $0$ grains of
sand. For the same reason, between any two vertices with $0$ grains of
sand there cannot be a sequence of $1$'s. In summary, $c$
is a recurrent on $F_{n-m}$ if and only if  between any two vertices
with $0$ grains of sand there is a vertex with $2$ grains of
sand.
Hence $c$ is a recurrent
sandpile on $F_{n-m}$ if and only if the sandpile obtained by
prepending a string of $m$ 2's to $c$ is recurrent on $W_{n}$. 
\end{proof}

It is well-known that the number of spanning trees in the fan graph
$F_{k}$ is precisely the Fibonnaci number $f_{2k}$, see \cite{H72}. 
So Proposition \ref{prop:NonMaxRecurrents} implies that for each
$1\leq m \leq n-2$, there are $f_{2(n-m)}$ recurrent sandpiles that
have a maximal $2$-string of length $m$ starting at $v_0$.

\begin{thm}  \label{thm:wheel}
Given $n\geq 3$, the avalanche polynomial of the
  wheel graph $W_{n}$ is
$$\sA_{W_{n}} = n^2w_{n}(x_{0},\dots, x_{n-1}) + \sum_{m = 1}^{n-1} m\cdot f_{2(n-m)} w_{m}(x_{0},\dots, x_{n-1}) + 2n\left(f_{2n-1}-1\right).$$
\end{thm}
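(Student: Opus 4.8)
The plan is to assemble the avalanche polynomial $\sA_{W_n}$ by summing avalanche monomials $\mu_{W_n}(c,v)$ over all recurrents $c \in \sg(W_n)$ and all non-sink vertices $v$, sorting the contributions by the size $m$ of the resulting principal avalanche. By Lemma \ref{lem:Toppling2strings}, an avalanche at $v$ has size $1 \le m \le n-2$ exactly when $v$ lies in a maximal $2$-string of length $m$, and in that case the avalanche topples precisely those $m$ consecutive vertices, so $\mu_{W_n}(c,v) = x_i x_{i+1}\cdots x_{i+m-1}$ where $v_i,\dots,v_{i+m-1}$ is the $2$-string. Lemma \ref{lem:SizeNAvalanche} handles the remaining sizes $n-1$ and $n$: all avalanches of size $n$ contribute $x_0\cdots x_{n-1}$ (there are $n^2$ of them, giving the term $n^2 w_n$), and all avalanches of size $n-1$ contribute $\frac{x_0\cdots x_{n-1}}{x_p}$ for the unique $0$-vertex $v_p$; since for each $p$ there are exactly $n$ recurrents $2^p 0 2^{n-p-1}$ wait --- more carefully, each recurrent $2^p02^{n-p-1}$ yields avalanches at the $n-1$ non-sink vertices $v\neq v_p$, all with the same monomial $x_0\cdots x_{n-1}/x_p$, so the size-$(n-1)$ contribution is $\sum_{p=0}^{n-1}(n-1)\cdot \frac{x_0\cdots x_{n-1}}{x_p}$. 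Observe this last sum is $(n-1)\,w_{n-1}(x_0,\dots,x_{n-1})$, since $w_{n-1} = \sum_{i=0}^{n-1} x_i x_{i+1}\cdots x_{i+n-2} = \sum_{p=0}^{n-1} \frac{x_0\cdots x_{n-1}}{x_p}$; this fits the claimed formula's $m=n-1$ term with coefficient $(n-1)f_{2(n-(n-1))} = (n-1)f_2 = (n-1)\cdot 1$.

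Next, for each fixed $m$ with $1 \le m \le n-2$ and each starting position $i \in \{0,\dots,n-1\}$, I would count the recurrents having a maximal $2$-string of length exactly $m$ beginning at $v_i$; each such recurrent contributes the single monomial $x_i x_{i+1}\cdots x_{i+m-1}$ exactly $m$ times to $\sA_{W_n}$ (one for each vertex in that $2$-string). By the rotational symmetry of $W_n$, the number of such recurrents does not depend on $i$; call it $N_m$. Proposition \ref{prop:NonMaxRecurrents} gives a bijection between recurrents with a maximal $2$-string of length $m$ starting at $v_0$ and recurrents on the fan graph $F_{n-m}$, and the remark following it (using the fact that $F_k$ has $f_{2k}$ spanning trees) gives $N_m = f_{2(n-m)}$. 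Summing over $i$, the total size-$m$ contribution is $m \cdot n \cdot \frac{1}{n}\cdot\big(\text{sum over }i\text{ of }x_i\cdots x_{i+m-1}\big)$ --- more directly, $\sum_{i=0}^{n-1} m\, N_m\, x_i\cdots x_{i+m-1} = m f_{2(n-m)}\, w_m(x_0,\dots,x_{n-1})$, which is exactly the $m$-th term of the claimed sum.

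It remains to pin down the constant term, which collects all avalanches of size $0$ (i.e.\ the grain added at $v$ causes no toppling because $c(v) \le 1$) and equals $2n(f_{2n-1}-1)$. The clean way is to count by complementation: the total number of (recurrent, vertex) pairs is $n\cdot\tau(W_n) = n(l_{2n}-2)$, and I subtract the number of pairs producing a positive-size avalanche. From the above, the number of positive-size pairs is $n^2 + n(n-1) + \sum_{m=1}^{n-2} m\,n\,f_{2(n-m)}$. So the constant term is $n\big(l_{2n}-2\big) - n^2 - n(n-1) - n\sum_{m=1}^{n-2} m f_{2(n-m)}$, and I would verify this equals $2n(f_{2n-1}-1)$, i.e.\ that $l_{2n} - 2 - (2n-1) - \sum_{m=1}^{n-2} m f_{2(n-m)} = 2f_{2n-1} - 2$. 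Reindexing $j = n-m$, the sum becomes $\sum_{j=2}^{n-1}(n-j)f_{2j} = n\sum_{j=2}^{n-1} f_{2j} - \sum_{j=2}^{n-1} j f_{2j}$, and using the standard identities $\sum_{j=1}^{k} f_{2j} = f_{2k+1} - 1$ together with a known summation for $\sum j f_{2j}$ (provable by a short induction), one checks the Lucas/Fibonacci identity $l_{2n} = f_{2n+1} + f_{2n-1}$ closes the computation.

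I expect the main obstacle to be exactly this last Fibonacci--Lucas bookkeeping: verifying the closed form for $\sum_{m=1}^{n-2} m f_{2(n-m)}$ and reconciling it with $\tau(W_n) = l_{2n}-2$ to extract the constant $2n(f_{2n-1}-1)$. This is a finite identity among well-known sequences, so it can be handled by induction on $n$ or by generating-function manipulation; the one subtlety to watch is the boundary behavior of the sum (the cases $m = n-1$ and $m=n$ are pulled out separately because $w_{n-1}$ and $w_n$ have their coefficients rescaled and because avalanches of those sizes arise from a different mechanism, not from maximal $2$-strings of that length). Everything else --- the term-by-term matching of the polynomial part --- is an immediate consequence of Lemmas \ref{lem:Toppling2strings} and \ref{lem:SizeNAvalanche}, Proposition \ref{prop:NonMaxRecurrents}, and the rotational symmetry of $W_n$.
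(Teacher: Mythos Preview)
Your proposal is correct and follows essentially the same route as the paper: sizes $n$ and $n-1$ via Lemma \ref{lem:SizeNAvalanche}, sizes $1\le m\le n-2$ via Lemma \ref{lem:Toppling2strings}, Proposition \ref{prop:NonMaxRecurrents}, and rotational symmetry, and the constant term by complementation from the total pair count $n(l_{2n}-2)$. The only difference is cosmetic: the paper phrases the complementation as counting $2$'s versus non-$2$'s in all recurrents and carries out the Fibonacci/Lucas simplification explicitly, whereas you defer that final identity.
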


\begin{proof}
In Lemma \ref{lem:SizeNAvalanche} we showed that 
$\lambda_{n} = n^{2}$.
This lemma also shows that 
the avalanches of size $n-1$ are
caused by adding a grain of sand at any vertex with $2$ grains 
in any 
recurrent of the form $2^p02^{n-p-1}$ with $0\leq p \leq n-1$. Since
$\mu_{W_{n}}(2^p02^{n-p-1},v) = x_0\cdots x_{n-1}/x_{p}$,
for any $v\neq v_{p}$. Then
the degree $n-1$ part of 
$\sA_{W_{n}}$ equals $(n-1)w_{n-1}(x_{0},\dots, x_{n-1})$. Note that
when $m = n-1$ we have $f_{2(n-m)} = f_{2} = 1$.

Now let $1\leq m \leq n-2$. 
Proposition \ref{prop:NonMaxRecurrents} implies
that there are $f_{2(n-m)}$ recurrents on $W_{n}$ with
a maximal 2-string of length $m$ starting at $v_0$.
So by Lemma \ref{lem:Toppling2strings},
there are $mf_{2(n-m)}$ principal avalanches
with avalanche monomial $x_{0}\cdots x_{m-1}$. 
This lemma also shows that any avalanche of size $m$ must occur at a
maximal $2$-string of length $m$. So the only possible avalanche
monomials of degree $m$ are the monomials occuring in the cyclic
polynomial $w_{m}$. Moreover, the cyclic symmetry of $W_{n}$ implies
that the number of principal avalanches that produce the toppling
sequence $(v_{0},v_{1},\dots, v_{m-1})$ equals the number of
principal avalanches that produce the toppling sequence
$(v_{i},v_{i+1},\dots, v_{i+m-1})$ for any $0\leq i\leq
n-1$. Therefore, for any $1\leq m \leq n-2$, the degree $m$ part of  
$\sA_{W_{n}}$ equals $mf_{2(n-m)}w_{m}(x_{0},\dots, x_{n-1})$.


Lastly, note that an avalanche of size $0$ is produced by adding a
grain of sand to a vertex with $0$ or $1$ grains of sand. So
$\lambda_{0}$ equals the number of $0$'s and $1$'s in every recurrent.
Since there are $l_{2n} - 2$ recurrents on $W_{n}$, then
$\lambda_{0}$ equals $n(l_{2n} - 2)$ minus the total number of $2$'s
in every recurrent. 
Recall that for $1\leq m \leq n-2$, the number $\lambda_{m}$ of
principal avalanches of size $m$ equals $m$ times the number of maximal
$2$-strings of length $m$ over all recurrents, that is, $\lambda_{m}$
equals the total number of $2$'s in every  maximal
$2$-string of length $m$.  Moreover, $\lambda_{n-1} + \lambda_{n} =
n^{2} + n(n-1) = 2n^{2}-n$ equals the number of principal avalanches
of size $\geq n-1$. But this number also equals the number
of $2$'s in every recurrent with a maximal $2$-string of size $\geq
n-1$.
Therefore, $\lambda_{1} + \dotsm + \lambda_{n}$
equals the number of $2$'s in every recurrent. 
Hence
\begin{align*}
\lambda_{0} &= n(l_{2n} - 2) - (\lambda_{1} + \dotsm + \lambda_{n}) 
= n(l_{2n} - 2) - 2n^{2} + n - \sum_{m=1}^{n-2} nmf_{2(n-m)} \\
&= n\left[l_{2n} - 2n - 1 - \sum_{m=1}^{n-2}mf_{2(n-m)}\right]
= n\left[l_{2n} - 2n - 1 - \sum_{m=2}^{n-1}(n-m)f_{2m}\right]\\
&= n\left[l_{2n} - n - 2 - \sum_{m=1}^{n-1}(n-m)f_{2m}\right]
= n\left[l_{2n} - n - 2 - \sum_{m=1}^{n-1}\sum_{k=1}^{m}
  f_{2k}\right]\\ 
&= n\left[l_{2n} - n - 2 - \sum_{m=1}^{n-1}\left(f_{2m+1}-1\right)\right] 
= n\left[l_{2n}  - 3 - \sum_{m=1}^{n-1}f_{2m+1}\right]\\ 
&= n(l_{2n}  - 2 - f_{2n}) = n(f_{2n+1}+f_{2n-1}-f_{2n}-2) 
= n(2f_{2n-1}  - 2) = 2n(f_{2n-1}-1). 
\end{align*}  
\end{proof}

In this case, $\lambda_{m}$ is also the number of
principal avalanches with burst size $m$ since every non-sink vertex
in $W_{n}$ is adjacent to the sink. Note also that as $n\to \infty$,
the proportion of avalanches of size $0$ is
\[\lim_{n\to \infty} \frac{2n(f_{2n-1}-1)}{n(l_{2n}-2)} = 1-\frac{1}{\sqrt{5}}. \]
Thus, recovering the last result in \cite[Section 2]{DaRoFPSAC03}.


\section{Conclusions}\label{final}

In this paper, we introduce the \emph{multivariate avalanche
  polynomial} of a graph $G$. This new combinatorial object enumerates the
toppling sequences of all principal avalanches generated by adding a
grain of sand to any recurrent sandpile on $G$. We also give explicit
descriptions of the multivariate avalanche polynomials for trees,
cycles, complete, and wheel graphs. Furthermore, we show that certain
evaluations of this polynomial recover some important information. In
particular from this polynomial we can compute the distribution of the
size of all principal avalanches, that is, we recover the
(univariate) avalanche polynomial first introduced in
\cite{CDR04}. Moreover, a different evaluation gives rise  to the unnormalized distribution of burst sizes, that is, the number of
grains of sand that fall into the sink in a principal avalanche. The
burst size, introduced by Levine in \cite{L15}, is an important
statistic related to the relationship between the
threshold state of the 
fixed-energy sandpile and the stationary state of Dhar's abelian
sandpile. 
Of special interest is a description of the avalanche polynomial for
grids and the family of multiple wheel graphs introduced in
\cite{DaRoFPSAC03}.


\section{Acknowledgements}

The authors would like to thank David Perkinson for suggesting to study the multivariate version of the avalanche polynomial and for his invaluable guidance and support throughout this project. We would also like to thank Andrew Fry, Lionel Levine, Christopher O'Neill, and Gautam Webb for their kind input and suggestions related to this work.

\bibliography{SandpileBib}{}
\bibliographystyle{plain}

\end{document}